\numberwithin{equation}{section}
\newtheorem{theorem}{Theorem}[section]
\newtheorem{lemma}[theorem]{Lemma}
\theoremstyle{definition}
\newtheorem{remark}{Remark}
\newtheorem{example}{Example}
\newtheorem{assumption}{Assumption}
\title[Multistability and cycles]{Stabilizing multiple equilibria and cycles with noisy prediction-based control}
\author[Elena Braverman and Alexandra Rodkina]
{}
\subjclass{Primary:  39A50, 39A30, 93D15; Secondary:  37H10, 37H30, 93C55, 39A33.}
 \keywords{Stochastic difference equations, proportional feedback control,
population dynamics models, Beverton-Holt equation}
\email{maelena@ucalgary.ca}
\email{alexandra.rodkina@uwimona.edu.jm}
\thanks{
E. Braverman is a corresponding author.
The first author is supported by NSERC grant
RGPIN-2020-03934. }
\begin{document}
\maketitle

\centerline{\scshape Elena Braverman}
\medskip
{\footnotesize
 \centerline{Dept. of Math. and Stats., University of Calgary}
 \centerline{2500 University Drive N.W. Calgary, AB,  T2N 1N4, Canada}}

\medskip

\centerline{\scshape  Alexandra Rodkina}
\medskip
{\footnotesize
 \centerline{Department of Mathematics,
the University of the West Indies}
 \centerline{Mona Campus, Kingston, Jamaica}}


\bigskip

\centerline{(Communicated by Martin Rasmussen)}

\begin{abstract}
Pulse stabilization of cycles with Prediction-Based Control including noise and
stochastic stabilization of maps with multiple equilibrium
points is analyzed for continuous but, generally, non-smooth maps.
Sufficient conditions of global stabilization are obtained.
Introduction of noise can relax restrictions on the control intensity.
We estimate how the control can be decreased with noise and verify it numerically.


\end{abstract}

\section{Introduction}

Consider a difference equation
\begin{equation}
\label{intro0}
x_{n+1}=g(x_n), \,\, n\in \mathbb N, \,\, x_0\in \mathbb R,
\end{equation}
with the continuous function $g:\mathbb R\to \mathbb R$  having several fixed points in the set $\mathbb K:=\{K_j, \,  j=0, 1,2, \dots, j_0\}$, $K_{j_0}\le \infty$.
We aim to stabilize simultaneously  all equilibria from $\mathbb K$ with odd indexes applying Prediction-Based Control (PBC) method 
\cite{uy99} with variable or stochastically  perturbed control  $\alpha_n ~\in~(0,1)$
\begin{equation}
\label{eq:PBCvar}
x_{n+1}=g(x_n)-\alpha_{n+1}(g(x_n)-x_n), \quad x_0\in \mathbb R,
\quad n\in {\mathbb N}_0 := \mathbb N\cup \{0\}.
\end{equation}
If $g$ is a unimodal function with a negative Schwarzian derivative, $\alpha_n \equiv \alpha$ are constant,
global and local stability of the unique positive equilibrium coincide for original map \eqref{intro0}, see \cite{Singer}, and controlled equation \eqref{eq:PBCvar}, see \cite{FL2010}.
This follows from the fact that the controlled map inherits the same properties \cite{FL2010}.
More sophisticated behaviour of maps with PBC is observed if either $g$ has multiple critical and equilibrium points, or
\eqref{intro0} is considered with control \eqref{eq:PBCvar} at every $k$-th step only. This corresponds to pulse control which can be applied in both deterministic and stochastic cases \cite{BKR2020,LizPBC2012,LP2014}. Pulse control can be viewed as PBC for the iterated map $g^k$. Even for unimodal $g$, after applying PBC, all the values at critical points become different which does not allow to apply \cite{Singer} for the pulse control.  Stabilization of equilibrium points of iterated maps with PBC corresponds to stabilizing either an equilibrium or a cycle of the original map.

In many practical applications, in particular for models of population dynamics, one of two one-side
Lipschitz constants for a stable equilibrium can be less than one, while the other can be quite large. This motivates us to concentrate on one-side constants.

For an arbitrary number $j_0+1$ of equilibrium points in $\mathbb K$,  with $g(x)-x$ changing sign at every point, starting with plus, we assume that at each point $K_{2j+1}$, the  function $g$ satisfies a one-side Lipschitz condition:
  \begin{equation}
  \begin{split}
  \label {cond:intr}
      &K_{2i+1}-g(x)\leq  L^+_{2i+1} (x-K_{2i+1}), \quad x\in (K_{2i+1}, K_{2i+2}),   \\
    &g(x)-K_{2i+1} \leq  L^-_{2i+1} (K_{2i+1}-x), \quad  x\in (K_{2i}, K_{2i+1}),
    \end{split}
		\quad 2i+2 \leq j_0,
    \end{equation}
   where one of $ L^-_{2i+1}$ and $ L^+_{2i+1}$ can be infinite. The expression $g(x)-x$ is supposed to be either non-negative or non-positive on each $(K_i, K_{i+1})$, and    may have other  equilibrium points inside  $(K_i, K_{i+1})$.
The function $g$ does not necessarily map each $(K_i, K_{i+1})$ and even  all $(K_0, K_{j_0})$ to itself.

The low threshold $L/(L+1)$ of the control  is calculated, based on the minimum $L$ of the  left and the right Lipschitz constants at each $K_{2j+1}$, which guarantees that, once a solution is in this smoother interval with a less steep $g$, it stays there.
However, this value might not be enough to prevent a solution from overshooting, getting out of $(K_0, K_{j_0})$ and from switching between different intervals, attending some of them an infinite number of times.
Due to the sign restrictions on $g(x)-x$, a solution either converges to one of equilibrium points or circulates infinitely between intervals.
The main goal of this paper is to find the least lower bound of the control parameter $\alpha_n$ which prevents this infinite circulation.

Distinction between points $K_{2j+1}$, based on the sizes of the left or the right Lipschitz constants, allows us  to split $(K_0, K_{j_0})$ into the union of  blocks of non-overlapping intervals.   We show that the infinite fluctuation of a solution is possible only inside some of those blocks, while inside the others  there is no circulation at all, after the application of the first stage of control. For each block with possible fluctuation, we  find the least low bound for the control preventing circulation.  We prove that if the control does not exceed this bound for some block, there is a two-cycle inside the block. The main deterministic  result states that  when \eqref{cond:intr} holds, there exists a  control such that  a solution  $x$ to \eqref{eq:PBCvar} with $x_0\in (K_0, K_{j_0})$,  converges  either to an odd-numbered equilibrium or to an equilibrium belonging to $(K_p, K_{p+1})\subset (K_0, K_{j_0})$.

Fig.~\ref{fig:Ricker2} illustrates the second iterate $f^2$ of the Ricker map $f(x)=xe^{r(1-x)}$ with $r=2.7$.  As it is seen from Fig.~\ref{fig:Ricker2}, $f^2$  has quite a big Lipschitz constant  on the interval $(0, K_1)$, so the stability PBC control parameter for it can be found based on the right-side constant.
However, since $f^2$ is continuously differentiable,  on some interval to the left of $K_1$, the  Lipschitz constant is close to $(f^2)'(K_1)\approx L$. Inspired by this example,  we consider a function $g$ having four fixed points, which allows to decrease the threshold $L/(L+1)$ of the control  to the value $(L-1)/(L+1)$. We prove deterministic, as well as stochastic version of this result.

\begin{figure}[ht]
\centering
\includegraphics[height=.22\textheight]{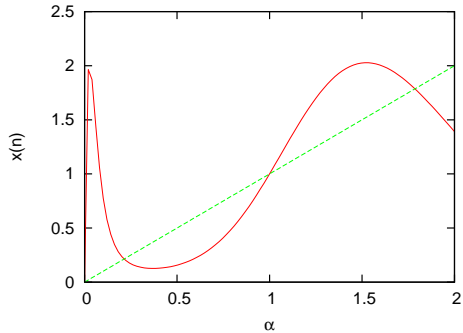}
\caption{The second iteration of the Ricker map for $r=2.7$.}
\label{fig:Ricker2}
\end{figure}

When $\alpha_n$  is random, it has the form $\alpha_n=\alpha+\ell\xi_{n}$, i.e.  a deterministic  control $\alpha$ is perturbed by a noise $\xi$ with intensity $\ell$. We assume that  random variables $\xi_n$, $n\in \mathbb N$, are independent, identically distributed and $|\xi_n|\le 1$. We also suppose that the value of the noise $\xi_n$  can be however close to one with a positive probability, which allows  to apply  the Borel-Cantelli lemma  and conclude that, for any   however big $\mathbf J\in \mathbb N$,
the value of the noise $\xi_n$  can stay however close to 1 for  $\mathbf J$ consecutive number of steps keeping $\alpha+\ell\xi_i$ close to $\alpha+\ell$.  This event will happen after some random moment $\mathcal N$. We show that, once $\alpha+l\xi_n>\underline \beta$, where $\underline \beta \in (0,1)$ is a prescribed constant, for $n = \mathcal N, \dots,  \mathcal N+\mathbf J$  a solution is driven into the area from where  circulation between intervals becomes impossible. This leads to convergence of a solution to one of the equilibrium points.  Thus the aim of the introduction of stochastic control is two-fold: to demonstrate the range of noise which keeps stability for the same interval of $\alpha_n$, and to improve deterministic results in a sense that there is stability for $\ell>0$, while there is no stability in deterministic case $\ell=0$ (compare with \cite{BR2019,Medv}).
A sharp result on the minimal value of the control is aligned with the idea of excluding a possible two-cycle \cite{Cull}.

The results of the paper are novel even in a deterministic setting, for instance,  compared to the multistability case considered in \cite[Theorem 2.5]{LizPBC2012}. The advantage of deterministic results in the present paper is that one-side local Lipschitz constants are taken into account, and even in the case of one-side infinite derivatives, stabilization with PBC can be achieved.  In addition, a solution can switch between different intervals $(K_p, K_{p+1})$ a finite number of times. On the other hand, in  \cite{LizPBC2012}, where existence of a two-side global Lipschitz constant was assumed, stabilization was achieved when the  lower bound for $\alpha$ is $1-1/L$, which is smaller than $L/(1+L)$. However, in the case when the derivative at the equilibrium points, where the sign $g(x)-x$ switches from positive to negative, is finite, we extend  the result from
\cite{LizPBC2012} and, moreover, improve it in the stochastic case.

Note that as $\alpha_n$ approaches one, the controlled map in \eqref{eq:PBCvar} becomes closer to the identity map and, once one-side Lipschitz constants are finite, stabilization is possible for $\alpha_n$ close enough to one, but separated from one (otherwise, we can stabilize non-fixed points of the original map). However, this is no longer true if one of one-side Lipschitz constants is infinite. In addition, whenever one of these constants is quite large, which is a typical situation for iterates of chaotic maps, choosing control intensive enough to map each union of adjacent segments surrounding a point that is potentially stable after a control application, can lead to significant overshoot in estimating the required parameter. The optimal way to find the required control bound is to trace possible two-cycles and choose a minimal bound excluding the existence of cycles.

We present three examples  which illustrate our theory, in deterministic as well in stochastic settings. One of these result has  one-side infinite derivatives, another two are  iterates of the Ricker map with $r=2.7$ and $r=3.5$. Bifurcation diagrams for these maps demonstrate how the appropriate noise level $\ell$ provides stability for $\alpha$ which does not work in pure deterministic framework.

The article has the following structure.  In Section~\ref{sec:4eq},  we consider the case of $g$ with four equilibrium points to illustrate the main ideas,
under an assumption which combines condition~\eqref{cond:intr}  and the case of differentiable at $K_1$ and $K_3$ function $g$.
In Sections~\ref{subsec:aux4eq}-\ref{subsec:trap2} we formulate auxiliary statements
and establish both the methods and the results for deterministic PBC with a variable control parameter.
  In Section~\ref{subsec:stoch} we explore the case when the control is perturbed by an additive noise. While the main results are stated in the text, all the  proofs are deferred to the Appendix.
 In Section~\ref {sec:arbeq}  we briefly discuss the case of an arbitrary number of equilibrium points. Most details including classification of intervals, auxiliary statements, proofs in deterministic and stochastic cases are in the Appendix, in particular, in Sections~\ref{sec:6_1}-\ref{subsec:arbap}.
Section \ref{sec:ex} illustrates our results with numerical examples.
Discussion of the results and future research directions in Section~\ref{sec:disc} concludes the text of the paper.

\section{The function with four equilibrium points}
\label{sec:4eq}

The model with four equilibrium points was inspired by the second iteration of population dynamics models. As an example, we take a Ricker map for $r>2$ when a two-cycle exists. Fig.~\ref{fig:Ricker2} illustrates that, for example, the left-side Lipschitz constant at $K_1$ significantly exceeds the right constant and also the derivative at the point, leading to greater required control values.
We recall that this model corresponds to either an equilibrium or a two-cycle stabilization with
a pulse PBC control applied at every second step.

\subsection{Auxiliary  statements}
\label{subsec:aux4eq}

\begin{assumption}
\label{as:4eq1}
Let $g:[K_0, K_4)\to [K_0, K_4)$, where $K_4 \leq \infty$, be a continuous function such that
\begin{equation*}
\begin{split}
& g(K_j)=K_j,~ K_j<K_{j+1},  ~ g(x)-x> 0,~  x\in (K_0,K_1), ~ g(x)>K_0, ~  x\in (K_0, K_4),\\
&  g(x)-x \mbox{ alternates its sign on adjacent }   (K_j,K_{j+1}),  \,\,  j=0, 1,2, 3,
\\ & \mbox{ and for some} \,\,L>1, ~ \delta\in \left[0, \min_{0 \leq i \leq 3}\{K_{i+1}-K_i\}\right),
\\
& |g(x)-K_1|\le L|K_1-x|, ~~ x\in (K_1-\delta, K_2), \\ & |K_3-g(x)|\le L|x-K_3|, ~x\in (K_2, K_3+\delta).
\end{split}
\end{equation*}
\end{assumption}

\begin{lemma}
\label{lem:conv}
Let $g: \mathbb R \to \mathbb R$ be a continuous function, $x_n$ be a solution to \eqref{eq:PBCvar} with some $x_0\in \mathbb R$,  and $\alpha_n\in [0, a]$ for some $a\in (0, 1)$ and all $n\in \mathbb N$. If~ $\lim_{n\to \infty}x_n=x^*$ then $g(x^*)=x^*$.
\end{lemma}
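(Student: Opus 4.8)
The plan is to rewrite the recurrence so that the one-step difference $x_{n+1}-x_n$ is directly proportional to the residual $g(x_n)-x_n$, and then to exploit the separation of $\alpha_n$ from $1$ when passing to the limit. First I would subtract $x_n$ from both sides of \eqref{eq:PBCvar} to obtain
\begin{equation*}
x_{n+1} - x_n = (1 - \alpha_{n+1})\bigl(g(x_n) - x_n\bigr), \quad n \in \mathbb{N}_0.
\end{equation*}
Since $\alpha_{n+1}\in[0,a]$ with $a<1$, the coefficient satisfies $1-\alpha_{n+1}\ge 1-a>0$, so taking absolute values yields the lower bound $|x_{n+1}-x_n|\ge (1-a)\,|g(x_n)-x_n|$, equivalently
\begin{equation*}
|g(x_n) - x_n| \le \frac{1}{1-a}\,|x_{n+1} - x_n|.
\end{equation*}

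Next I would take limits. The hypothesis $x_n\to x^*$ forces $x_{n+1}-x_n\to 0$, so the right-hand side above tends to zero and therefore $g(x_n)-x_n\to 0$. On the other hand, continuity of $g$ gives $g(x_n)\to g(x^*)$, hence $g(x_n)-x_n\to g(x^*)-x^*$. Uniqueness of the limit then forces $g(x^*)-x^*=0$, i.e. $g(x^*)=x^*$, which is the claim.

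The step that genuinely carries the argument is the use of the strict separation $a<1$: it is precisely what keeps $1-\alpha_{n+1}$ bounded away from zero and prevents a vanishing coefficient from masking a nonzero residual $g(x_n)-x_n$. If $\alpha_n$ were allowed to approach $1$, the controlled map would approach the identity and a convergent orbit could stabilize at a non-fixed point of $g$, so the bound $a<1$ is exactly the hypothesis that removes this obstruction. Everything else is a short limit computation requiring only continuity of $g$ and the convergence of consecutive terms to the same limit.
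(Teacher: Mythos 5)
Your proof is correct and follows essentially the same route as the paper's: rewriting the recurrence as $x_{n+1}-x_n=(1-\alpha_{n+1})(g(x_n)-x_n)$, using $1-\alpha_{n+1}\ge 1-a>0$ to bound $|g(x_n)-x_n|\le |x_{n+1}-x_n|/(1-a)\to 0$, and concluding by continuity of $g$. No gaps.
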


Denote
\begin{equation}
\label{def:G}
G(v, x):=(1-v)g(x)+v x, \quad x\in \mathbb R, \, v\in [0, 1].
\end{equation}

\begin{lemma}
\label{lem:Gv}
Let  $g$ satisfy Assumption~\ref{as:4eq1}, and $G$ be defined as in \eqref {def:G}. Then,
\begin{enumerate}
\item [(i)] $G(1,x)=x$, $G(0,x)=g(x)$, $x\in [0, \infty)$, $G:[0,1]\times \mathbb R\to  \mathbb R$ is a continuous function;
\item [(ii)] $G(v,x)-x\to 0$ as $v\to 1$, and  $G(v,x)-g(x)\to 0$ as $v\to 0$,  uniformly on $[K_0, b]$ for each $b>K_0$;
\item [(iii)] for $1>a>b>0$, we have $g(x)>G(b, x)>G(a, x)>x$ if $g(x)>x$, while
$g(x)<G(b, x)<G(a, x)<x$, if $g(x)<x$;
\item [(iv)] for any $\mu_0 \in (0, 1)$, $\mu\in(\mu_0, 1)$ and
$\hat \mu:=\frac{\mu-\mu_0}{1-\mu_0}$, we have $G(\mu, x)=(1-\hat \mu)G(\mu_0, x)+\hat \mu x$.
\end{enumerate}
\end{lemma}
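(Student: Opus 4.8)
The plan is to notice that a single algebraic identity drives almost the entire lemma, namely
\[
G(v,x)-x = (1-v)g(x)+vx-x = (1-v)\bigl(g(x)-x\bigr),
\]
and then to reduce each item to a short computation. For item (i) I would substitute $v=1$ and $v=0$ directly into \eqref{def:G}, giving $G(1,x)=x$ and $G(0,x)=g(x)$; joint continuity of $G$ on $[0,1]\times\mathbb R$ then follows since $G$ is a first-degree polynomial in $v$ whose coefficients $g(x)$ and $x$ are continuous in $x$ (continuity of $g$ is part of Assumption~\ref{as:4eq1}), so $G$ is a product and sum of continuous maps.

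For item (ii) I would use the identity above together with the companion identity $G(v,x)-g(x)=v\bigl(x-g(x)\bigr)$. On any compact interval $[K_0,b]$ the continuous function $g(x)-x$ is bounded, say $|g(x)-x|\le M_b$, so that $|G(v,x)-x|\le (1-v)M_b\to 0$ as $v\to 1$ and $|G(v,x)-g(x)|\le vM_b\to 0$ as $v\to 0$. Since the bound $M_b$ does not depend on $x\in[K_0,b]$, both limits are uniform on $[K_0,b]$.

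Item (iii) is again immediate from the factorization. For $1>a>b>0$ we have $0<1-a<1-b<1$, so multiplying the signed quantity $g(x)-x$ by these factors preserves its sign and orders the products: if $g(x)>x$ then $g(x)-x>(1-b)(g(x)-x)>(1-a)(g(x)-x)>0$, and adding $x$ yields $g(x)>G(b,x)>G(a,x)>x$; the case $g(x)<x$ is handled identically with all inequalities reversed.

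Finally, item (iv) is a direct algebraic verification. Substituting $\hat\mu=\frac{\mu-\mu_0}{1-\mu_0}$, so that $1-\hat\mu=\frac{1-\mu}{1-\mu_0}$, into $(1-\hat\mu)G(\mu_0,x)+\hat\mu x$ and collecting terms, I expect the coefficient of $g(x)$ to simplify to $(1-\hat\mu)(1-\mu_0)=1-\mu$ and the coefficient of $x$ to simplify to $(1-\hat\mu)\mu_0+\hat\mu=\mu$, so that the right-hand side equals $(1-\mu)g(x)+\mu x=G(\mu,x)$. There is no genuine obstacle in this lemma; the only step requiring any care is the uniformity claim in (ii), which is precisely where compactness of $[K_0,b]$ and the resulting uniform bound $M_b$ are needed rather than mere pointwise continuity of $g$.
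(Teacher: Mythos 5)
Your proof is correct and follows essentially the same route as the paper: parts (i)--(iii) are read off directly from the definition \eqref{def:G} (your factorization $G(v,x)-x=(1-v)(g(x)-x)$ just makes explicit what the paper calls ``immediate''), and your algebra for (iv) is identical to the paper's computation of $1-\hat\mu=\frac{1-\mu}{1-\mu_0}$ and the resulting coefficients. No gaps.
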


\subsection{The first trap}
\label{subsec:trap1}
We start with finding a control which forces the interval $[K_1-\delta, K_3+\delta]$ to become invariant (a trap) in a sense that as soon as $x_n \in [K_1-\delta, K_3+\delta]$ for some $n\in \mathbb N$, it stays there forever and converges either to $K_1$ or $K_3$.

\begin{lemma}
\label{lem:LL+2}
Let Assumption~\ref{as:4eq1} hold,  and $\alpha>L/(L+1)$. Let $\bar \alpha\in [\alpha, 1)$ and $\alpha_n\in [\alpha, \bar \alpha]$ for each $n\in \mathbb N$.  Then,
\begin{enumerate}
\item [(i)]
$G(\alpha, \cdot):[K_1, K_3]\to [K_1, K_3]$;

\item [(ii)]  if $\delta>0$, then, in addition to (i), $G(\alpha, \cdot):[K_1-\delta, K_1]\to [K_1-\delta, K_1]$ and  $G(\alpha, \cdot):[K_3, K_3+\delta]\to [K_3, K_3+\delta]$;

\item [(iii)]  for each $x_0\in [K_1-\delta, K_2)\cup (K_2, K_3+\delta]$ the solution to \eqref{eq:PBCvar} converges either to $K_1$ or to $K_3$.
\end{enumerate}
\end{lemma}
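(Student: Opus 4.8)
The plan is to read the controlled recursion \eqref{eq:PBCvar} as $x_{n+1}=G(\alpha_{n+1},x_n)$ with $G$ as in \eqref{def:G}, and to reduce the whole statement to two ingredients: the invariance claims (i)--(ii), proved by direct estimates from the one-sided Lipschitz bounds of Assumption~\ref{as:4eq1}, and a monotone-convergence argument for (iii). The single algebraic fact behind the invariance is that $\alpha>L/(L+1)$ is equivalent to $\alpha-(1-\alpha)L>0$, equivalently $(1-\alpha)L-\alpha<0$; this is exactly the sign that makes each estimate close.

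For (i) I would split $[K_1,K_3]$ at $K_2$. On $[K_1,K_2]$ the sign condition gives $g(x)\le x$, hence $G(\alpha,x)\le x\le K_2\le K_3$, while the Lipschitz bound $g(x)\ge K_1-L(x-K_1)$ yields $G(\alpha,x)-K_1\ge (x-K_1)[\alpha-(1-\alpha)L]\ge 0$. On $[K_2,K_3]$ the sign is reversed, so $G(\alpha,x)\ge x\ge K_2\ge K_1$, while $g(x)\le K_3+L(K_3-x)$ gives $K_3-G(\alpha,x)\ge (K_3-x)[\alpha-(1-\alpha)L]\ge 0$. For (ii) the same two computations on $(K_1-\delta,K_1)$ (where $g>x$) and on $(K_3,K_3+\delta)$ (where $g<x$) produce $G(\alpha,x)\le K_1$, respectively $G(\alpha,x)\ge K_3$, and the trivial convex-combination bounds $G(\alpha,x)>x\ge K_1-\delta$, respectively $G(\alpha,x)<x\le K_3+\delta$, finish the invariance. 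Each of these four corners uses one Lipschitz inequality together with the sign of $\alpha-(1-\alpha)L$.

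To pass from the fixed value $\alpha$ to arbitrary $\alpha_n\in[\alpha,\bar\alpha]$, I would invoke Lemma~\ref{lem:Gv}(iv): $G(\alpha_n,x)$ is a convex combination of $G(\alpha,x)$ and $x$, so whenever both lie in one of the intervals $[K_1-\delta,K_1]$, $[K_1,K_3]$, $[K_3,K_3+\delta]$, so does $G(\alpha_n,x)$; thus all three are invariant under the full variable control. For (iii) I then run monotonicity on each piece: on $[K_1-\delta,K_1]$ and on $(K_2,K_3]$ one has $g(x)\ge x$, so $x_{n+1}\ge x_n$ and, by invariance, the orbit is bounded above by $K_1$, respectively $K_3$; on $(K_1,K_2)$ and on $(K_3,K_3+\delta]$ one has $g(x)<x$, so the orbit is nonincreasing and bounded below by $K_1$, respectively $K_3$. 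A bounded monotone sequence converges, and since $\alpha_n\in[0,\bar\alpha]$ with $\bar\alpha<1$, Lemma~\ref{lem:conv} makes the limit a fixed point of $g$. As $g(x)-x$ keeps a definite sign on each open interval, the only admissible limits are $K_1$ for $x_0\in[K_1-\delta,K_2)$ and $K_3$ for $x_0\in(K_2,K_3+\delta]$, the removed point $K_2$ being itself a fixed point.

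The main obstacle I expect is the bookkeeping in (i)--(ii): one must pair the correct one-sided Lipschitz bound with the correct sign of $g(x)-x$ on each of the four sub-intervals and check that the threshold $\alpha>L/(L+1)$ orients every inequality the right way. Once invariance is secured, (iii) is a routine monotone-boundedness argument combined with Lemma~\ref{lem:conv}, and the only care needed there is to confirm that no interior equilibrium can trap the orbit, which is guaranteed by the definite sign of $g(x)-x$ on each $(K_j,K_{j+1})$.
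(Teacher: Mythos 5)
Your proposal is correct and follows essentially the same route as the paper: the invariance in (i)--(ii) is obtained from the one-sided Lipschitz bounds of Assumption~\ref{as:4eq1} together with the sign of $(1-\alpha)L-\alpha$, and (iii) is the same monotone-boundedness argument closed by Lemma~\ref{lem:conv}. Your explicit use of Lemma~\ref{lem:Gv}(iv) to pass from the fixed bound $\alpha$ to the variable control $\alpha_n\in[\alpha,\bar\alpha]$ is a small detail the paper leaves implicit, but it is the same argument.
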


Now we improve the result of Lemma~\ref{lem:LL+2} by introducing  a control $ \underline \alpha_0$, which might be less than $L/(L+1)$,
\begin{equation}
\label{def:alpha0}
   \underline \alpha_0=\left\{\begin{array}{l}
   \frac L{L+1}, \,\, \mbox{if} \,\, \delta=0,\\
   \inf\biggl\{\beta\in \left(\frac {L-1}{L+1},1\right):\min_{x\in [K_1-\delta, K_2]}G(\beta, x)>K_1-\delta,\\
\max_{x\in [K_2, K_3+\delta]}G(\beta, x)<K_3+\delta\biggr\},\,\, \mbox{if} \,\, \delta>0.
   \end{array}\right.
 \end{equation}
 \begin{lemma}
  \label{lem:alpha0}
Let Assumption \ref{as:4eq1} hold, $\delta>0$, $\underline \alpha_0$ be defined as in \eqref{def:alpha0}.
Then,
\begin{enumerate}
\item [(i)] the set   introduced in \eqref{def:alpha0} for $\delta>0$ is non-empty, so $\underline \alpha_0$ is well defined;
\item [(ii)] $ \underline \alpha_0\le L/(L+1)$.
\end{enumerate}
\end{lemma}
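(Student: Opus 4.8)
The plan is to reduce both assertions to a single inclusion: I claim that the entire interval $\left(\frac{L}{L+1},1\right)$ is contained in the set defining $\underline\alpha_0$ in \eqref{def:alpha0}. Granting this, part (i) is immediate because the set is then nonempty, and part (ii) follows because the infimum of a set containing $\left(\frac{L}{L+1},1\right)$ cannot exceed $\inf\left(\frac{L}{L+1},1\right)=\frac{L}{L+1}$. Before verifying the claim I would record the sign pattern forced by Assumption~\ref{as:4eq1}: alternation starting from $g(x)-x>0$ on $(K_0,K_1)$ gives $g(x)-x<0$ on $(K_1,K_2)$, $g(x)-x>0$ on $(K_2,K_3)$, and $g(x)-x<0$ on $(K_3,K_4)$. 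I also note that $K_1-\delta>K_0$, since $\delta<\min_i(K_{i+1}-K_i)\le K_1-K_0$, which I will need below.

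Fix $\beta\in\left(\frac{L}{L+1},1\right)$ and verify the left constraint $\min_{x\in[K_1-\delta,K_2]}G(\beta,x)>K_1-\delta$ by splitting the interval at $K_1$. On $[K_1-\delta,K_1]$ the endpoint $K_1-\delta$ lies in $(K_0,K_1)$, so $g(x)\ge x$ on this subinterval with $g(x)>x$ for $x<K_1$; hence $G(\beta,x)=(1-\beta)g(x)+\beta x\ge x$, and in fact $G(\beta,x)>K_1-\delta$ throughout (at $x=K_1$ because $G(\beta,K_1)=K_1$, elsewhere because $G(\beta,x)>x\ge K_1-\delta$). On $[K_1,K_2]$ I would feed the one-sided Lipschitz lower bound $g(x)\ge K_1-L(x-K_1)$, extracted from $|g(x)-K_1|\le L|K_1-x|$, into \eqref{def:G} to obtain $G(\beta,x)\ge K_1+(x-K_1)\bigl(\beta(L+1)-L\bigr)\ge K_1$, using $\beta(L+1)>L$. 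Since $G(\beta,\cdot)$ is continuous and $[K_1-\delta,K_2]$ is compact, the minimum is attained and strictly exceeds $K_1-\delta$.

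The right constraint $\max_{x\in[K_2,K_3+\delta]}G(\beta,x)<K_3+\delta$ is handled symmetrically, splitting at $K_3$. On $[K_2,K_3]$ the Lipschitz upper bound $g(x)\le K_3+L(K_3-x)$ yields $G(\beta,x)\le K_3+(K_3-x)\bigl(L-\beta(L+1)\bigr)\le K_3$, again because $\beta(L+1)>L$. On $[K_3,K_3+\delta]$, whose left endpoint lies in $(K_2,K_3+\delta)$ while the open part lies in $(K_3,K_4)$, we have $g(x)\le x$, so $G(\beta,x)\le x\le K_3+\delta$ with strict inequality for $x>K_3$; compactness again gives a strict maximum bound. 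This establishes the claim and hence both (i) and (ii).

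The argument is essentially elementary; the points requiring care are that the local Lipschitz condition in Assumption~\ref{as:4eq1} is two-sided, so one must select the lower bound on $g$ just to the right of $K_1$ and the upper bound on $g$ just to the left of $K_3$, and that strictness of the min/max inequalities must be tracked at the interval endpoints via the sign of $g(x)-x$ rather than the Lipschitz estimate alone (which only yields $\ge K_1$, resp.\ $\le K_3$). The threshold $\frac{L}{L+1}$ is exactly the value at which the coefficient $\beta(L+1)-L$ changes sign, which is the mechanism behind the bound $\underline\alpha_0\le\frac{L}{L+1}$.
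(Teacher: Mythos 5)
Your proof is correct and follows essentially the same route as the paper: the paper also shows that every $\alpha>L/(L+1)$ belongs to the set in \eqref{def:alpha0}, obtaining the two min/max inequalities by citing Lemma~\ref{lem:LL+2}~(i)--(ii), whose proof consists of exactly the split at $K_1$ (resp.\ $K_3$) and the one-sided Lipschitz computation with the coefficient $L(1-\beta)-\beta<0$ that you carry out explicitly. Your version merely inlines that lemma and is, if anything, slightly more explicit about where strictness of the inequalities comes from.
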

 \begin{lemma}
\label{lem:delta01}
Let Assumption~\ref{as:4eq1} hold, $\underline \alpha_0$ be defined as in \eqref{def:alpha0}, $\alpha_*\in (\underline \alpha_0, 1)$, $\alpha^*\in (\alpha_*, 1)$ and $\alpha_n\in [\alpha_*, \alpha^*]$, for each $n\in \mathbb N$. Then,
\begin{enumerate}
\item [(i)]  $G(\alpha, \cdot):(K_{1}-\delta, K_3+\delta)\to (K_{1}-\delta, K_3+\delta)$, for each $\alpha>\underline \alpha_0$;
\item [(ii)] for $x_0\in (K_1-\delta, K_2)$ a solution to equation \eqref{eq:PBCvar} converges to $K_1$, while  for $x_0\in (K_2, K_3+\delta)$ it converges to $K_3$.
\end{enumerate}
\end{lemma}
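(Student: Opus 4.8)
The plan is to derive both parts from monotonicity of $G$ in its first argument, recorded in Lemma~\ref{lem:Gv}(iii), together with the one-sided Lipschitz bounds of Assumption~\ref{as:4eq1}; the whole improvement over the threshold $L/(L+1)$ rests on the elementary inequality $(1-\alpha)L-\alpha<1$, which holds exactly when $\alpha>\frac{L-1}{L+1}$, and which also gives $(1-\alpha)L-\alpha>-1$ for $\alpha<1$, so the coefficient $c(\alpha):=(1-\alpha)L-\alpha$ satisfies $|c(\alpha)|<1$ on the admissible range.

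For (i), I would first observe that the set defining $\underline{\alpha}_0$ in \eqref{def:alpha0} is upward closed. Indeed, on $(K_1,K_2)$ and $(K_3,K_3+\delta)$ one has $g(x)<x$, so by Lemma~\ref{lem:Gv}(iii) $G(\beta,x)$ increases with $\beta$, whereas on $(K_2,K_3)$ and $(K_1-\delta,K_1)$ one has $g(x)>x$ and $G(\beta,x)$ decreases; in each case enlarging $\beta$ moves the relevant extremum in the favourable direction, so every $\alpha>\underline{\alpha}_0$ lies in that set. This already yields $G(\alpha,x)>K_1-\delta$ on $[K_1-\delta,K_2]$ and $G(\alpha,x)<K_3+\delta$ on $[K_2,K_3+\delta]$. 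The two complementary one-sided bounds I would obtain by splitting each interval: on $(K_1,K_2)$ and $(K_2,K_3)$ the inequalities $G(\alpha,x)<x<K_3+\delta$ and $G(\alpha,x)>x>K_1-\delta$ are immediate since $G$ lies strictly between $x$ and $g(x)$; on the collars $[K_1-\delta,K_1]$ and $[K_3,K_3+\delta]$ I would insert the Lipschitz bound into \eqref{def:G} to get $|G(\alpha,x)-K_1|<|x-K_1|\le\delta$, respectively with $K_3$, because $\alpha>\underline{\alpha}_0\ge\frac{L-1}{L+1}$ forces $c(\alpha)<1$. Since $\delta<\min_i(K_{i+1}-K_i)$, this gives $G(\alpha,x)<K_1+\delta<K_2$ and $G(\alpha,x)>K_3-\delta>K_2$, proving invariance of $(K_1-\delta,K_3+\delta)$ and, simultaneously, that each of $(K_1-\delta,K_2)$ and $(K_2,K_3+\delta)$ is invariant on its own.

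For (ii) I would argue inside $(K_1-\delta,K_2)$, the case of $(K_2,K_3+\delta)$ being its mirror image about $K_2$ with $K_3$ in place of $K_1$. By the invariance just established, the orbit stays in $(K_1-\delta,K_2)$ for every $\alpha_n\in[\alpha_*,\alpha^*]$. Setting $d_n:=|x_n-K_1|$, the same collar/sign analysis shows $d_{n+1}<d_n$ whenever $x_n\neq K_1$: if $x_{n+1}$ stays on the same side of $K_1$ the strict inequality comes from $G$ lying strictly between $x$ and $g(x)$, while if it crosses $K_1$ the Lipschitz bound with $|c(\alpha_{n+1})|<1$ supplies it. Hence $(d_n)$ is strictly decreasing, so $d_n\le d_0<K_2-K_1$; in particular the whole orbit is confined to $[K_1-\delta,K_1+d_0]$, \emph{bounded away from the repelling fixed point $K_2$}.

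Finally I would upgrade monotonicity to $d_n\to0$. Suppose $d_n\downarrow d^*>0$. On the compact set $A:=\{x:\ d^*\le|x-K_1|\le d_0\}\cap[K_1-\delta,K_1+d_0]$, which avoids both $K_1$ and $K_2$, the map $(x,\alpha)\mapsto|x-K_1|-|G(\alpha,x)-K_1|$ is continuous and, by the previous step, strictly positive, hence attains a minimum $m>0$ on $A\times[\alpha_*,\alpha^*]$. Since every $x_n\in A$, this forces $d_n-d_{n+1}\ge m$ for all $n$, contradicting convergence of $(d_n)$. Thus $d^*=0$, i.e. $x_n\to K_1$, which is consistent with Lemma~\ref{lem:conv}. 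The main obstacle is precisely this last step: because $g$ may be arbitrarily flat near $K_1$ (equivalently $G$ may be nearly the identity), there is no uniform contraction factor, so excluding a positive limit $d^*$ requires the compactness-plus-continuity argument, and this in turn relies on the orbit remaining strictly inside $(K_1-\delta,K_2)$, i.e. away from $K_2$.
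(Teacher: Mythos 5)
Your proof is correct, and part (i) is essentially the paper's argument: the authors likewise get $G(\alpha,x)>K_1-\delta$ on $[K_1-\delta,K_2]$ from membership of $\alpha$ in the set of \eqref{def:alpha0} and close the other side with the collar estimate $G(\alpha,x)-K_1\le[L(1-\alpha)-\alpha](K_1-x)<(K_1-x)+K_1-K_1<\delta$ (their \eqref{est:delta1}); your explicit check that the set is upward closed via Lemma~\ref{lem:Gv}(iii) supplies a step the paper leaves implicit, and the only loose end is that for $\delta=0$ there is no such set, a case the paper dispatches by citing Lemma~\ref{lem:LL+2}(i),(iii) (your machinery degenerates to the same thing, since $\alpha>L/(L+1)$ forces $c(\alpha)<0$ and no crossing of $K_1$ ever occurs). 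In part (ii) you genuinely diverge. The paper fixes the uniform crossing bound $q:=L(1-\alpha_*)-\alpha_*\in(0,1)$ (replacing $\alpha_*$ by a point of $(\underline\alpha_0,L/(L+1))$ when $\alpha_*\ge L/(L+1)$), notes that a solution which eventually stops crossing $K_1$ moves monotonically and so converges to a fixed point, necessarily $K_1$ by Lemma~\ref{lem:conv}, while a solution crossing infinitely often has its deviation multiplied by at most $q$ at each crossing, so the crossing amplitudes decay geometrically. You instead treat $d_n=|x_n-K_1|$ as a strict Lyapunov function and exclude a positive limit $d^*$ by minimizing the continuous, positive decrement $|x-K_1|-|G(\alpha,x)-K_1|$ over the compact annulus $\{d^*\le|x-K_1|\le d_0\}$ times $[\alpha_*,\alpha^*]$. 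Both are sound. Your route buys uniformity: a single argument covers the monotone and oscillating regimes and never invokes Lemma~\ref{lem:conv}, at the price of the compactness step. The paper's route isolates exactly where a quantitative contraction is available (the crossings, where the Lipschitz bound applies) and lets monotonicity handle same-side steps, which is how it sidesteps the obstacle you flag — the lack of a uniform contraction factor when $g$ is nearly the identity near $K_1$ — without any compactness argument.
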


\begin{remark}
\label{rem:nonmon}
If  $\delta=0$ and the controlled solution remains in $[K_1, K_3]$, it monotonically converges to either $K_1$ or $K_3$.
Also, in Assumption \ref{as:4eq1}  the function $g$ is allowed to have an infinite left-side local Lipschitz constant (derivative) at $K_1$ and right-side at $K_3$, while the case $\delta>0$
includes the situation when $g$ is differentiable at $K_1$ and $K_3$.

In the case $\delta>0$ and $\alpha<L/(L+1)$, there is, generally, no monotonicity of a solution, and to get desired stability it is not enough for the solution to remain in $[K_1-\delta, K_3+\delta]$, there could be a cycle inside of $(K_1-\delta, K_3+\delta)$. As expected, in this case the control  can be more flexible, and the parameter $\underline \alpha_0$ can be chosen smaller.
\end{remark}

The next lemma shows that sometimes the invariant set $(K_1-\delta, K_2)\cup (K_2, K_3+\delta)$ can actually be extended to $(K_0, K_2)\cup (K_2, \infty)$ without increasing the low bound $\underline \alpha_0$.
\begin{lemma}
\label{lem:alpha01}
Let Assumption \ref{as:4eq1} hold, $\underline \alpha_0$ be defined as in \eqref{def:alpha0}, $\alpha_*\in (\underline \alpha_0, 1)$, $\alpha^*\in (\alpha_*, 1)$ and $\alpha_n\in [\alpha_*, \alpha^*]$, for each $n\in \mathbb N$.
 Assume also that for all $\alpha>\underline \alpha_0$ we have
 \begin{equation}
\label{cond:or1}
\mbox{$\sup_{x\in (K_{0}, K_1-\delta)}G(\alpha, x)<K_3+\delta$ or $\inf_{x\in (K_{3}+\delta, \infty)}G(\alpha, x)>K_1-\delta$.}
 \end{equation}
 Then, for each $x_0>0$, $x_0\neq K_2$,
a solution to equation \eqref{eq:PBCvar} converges to an equilibrium. 
\end{lemma}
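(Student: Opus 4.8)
The plan is to reduce everything to Lemma~\ref{lem:delta01}, which already delivers the conclusion once the orbit enters the trap $M:=(K_1-\delta,K_3+\delta)$ at a point different from $K_2$: part~(ii) gives convergence to $K_1$ or $K_3$, and part~(i) shows $M$ is forward invariant under $G(\alpha,\cdot)$ for $\alpha>\underline\alpha_0$. Writing \eqref{eq:PBCvar} as $x_{n+1}=G(\alpha_{n+1},x_n)$ with $\alpha_{n+1}\in[\alpha_*,\alpha^*]\subset(\underline\alpha_0,1)$, and recalling from Assumption~\ref{as:4eq1} that $g(x)-x>0$ on $(K_0,K_1)\cup(K_2,K_3)$ and $g(x)-x<0$ on $(K_1,K_2)\cup(K_3,K_4)$, the remaining task is to drive every orbit with $x_0\in(K_0,K_4)$, $x_0\neq K_2$ (which under the normalization $K_0=0$ is exactly $x_0>0$, $x_0\neq K_2$), into $M$.

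First I would dispose of the disjunction in \eqref{cond:or1}. By Lemma~\ref{lem:Gv}(iii) (equivalently, $\partial_\alpha G(\alpha,x)=-(g(x)-x)$), $G(\alpha,x)$ is decreasing in $\alpha$ on $(K_0,K_1-\delta)$, where $g(x)>x$, so $\sup_{x\in(K_0,K_1-\delta)}G(\alpha,x)$ is nonincreasing in $\alpha$; likewise it is increasing in $\alpha$ on $(K_3+\delta,K_4)$, where $g(x)<x$, so $\inf_{x\in(K_3+\delta,K_4)}G(\alpha,x)$ is nondecreasing. Thus each alternative in \eqref{cond:or1} is upward closed in $\alpha$, and its negation is downward closed. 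Were the first alternative to fail at some $\alpha_1>\underline\alpha_0$ and the second at some $\alpha_2>\underline\alpha_0$, both would fail at any $\alpha^\sharp\in(\underline\alpha_0,\min\{\alpha_1,\alpha_2\})$, contradicting \eqref{cond:or1}; hence one fixed alternative holds for every $\alpha>\underline\alpha_0$. This reduction is the step I expect to require the most care, precisely because the control is time varying: only a single alternative valid for all admissible $\alpha_n$ lets us use one invariant region consistently along the whole orbit. Assume it is the first alternative; the second is symmetric, exchanging the roles of the two ends.

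With the first alternative in force, I would show $D:=(K_0,K_3+\delta)$ is forward invariant. On $M\subset D$ this is Lemma~\ref{lem:delta01}(i); on $(K_0,K_1-\delta]$ we have $G(\alpha,x)>x>K_0$ since $g(x)>x$, while $G(\alpha,x)<K_3+\delta$ by the supremum bound (using continuity of $G$ at the endpoint $K_1-\delta$, so that $G(\alpha,K_1-\delta)\le\sup_{x\in(K_0,K_1-\delta)}G(\alpha,x)<K_3+\delta$). Next I would show the right region $(K_3+\delta,K_4)$ is transient: there $g(x)<x$ gives $G(\alpha,x)<x$, so the orbit strictly decreases while it remains above $K_3+\delta$, and it stays above $K_0$ because $g>K_0$ on $(K_0,K_4)$; as $g$ has no fixed point in $(K_3,K_4)$, Lemma~\ref{lem:conv} rules out convergence inside this region, so the orbit reaches $\overline D$ in finitely many steps, and a further step (should it land exactly at $K_3+\delta$) places it in $D$.

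It remains to prove convergence once the orbit lies in the invariant set $D=(K_0,K_1-\delta]\cup M$. If it ever enters $M$ at a point $\neq K_2$, Lemma~\ref{lem:delta01}(ii) yields convergence to $K_1$ or $K_3$; if it lands exactly on $K_2$ it is fixed there, still converging to an equilibrium. Otherwise the orbit remains in $(K_0,K_1-\delta]$ for all $n$, where $g(x)>x$ makes it strictly increasing; for $\delta=0$ it is bounded by $K_1$ and, by Lemma~\ref{lem:conv} together with the absence of fixed points in $(K_0,K_1)$, converges to $K_1$, whereas for $\delta>0$ these same facts forbid convergence in $(K_0,K_1-\delta]$, so the orbit must cross $K_1-\delta$; since $D$ is invariant it cannot overshoot $K_3+\delta$, hence it enters $M$ and the first case applies. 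This exhausts all admissible starting points and completes the argument.
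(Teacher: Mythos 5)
Your proof is correct and follows essentially the same route as the paper's: drive every orbit into the trap $(K_1-\delta,K_3+\delta)$ in finitely many steps using whichever alternative of \eqref{cond:or1} is in force (monotone escape from the outer regions, no interior equilibria, Lemma~\ref{lem:conv}), then invoke Lemma~\ref{lem:delta01}. Your preliminary observation that monotonicity of $G(\cdot,x)$ in the control parameter forces a single fixed alternative of \eqref{cond:or1} to hold for all $\alpha>\underline\alpha_0$ is left implicit in the paper's argument, and it is worth making explicit precisely because the control $\alpha_n$ is time-varying.
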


Lemma \ref{lem:alpha01} demonstrates that if we want to keep stability of $K_1$ and $K_3$ for all $x_0>0$, $x_0\neq K_2$, we need to increase the low bound of control only if \eqref{cond:or1} fails. So without loss of generality, we assume that
\begin{equation}
\label{cond:1}
g_m:=\max_{x\in [K_0,  K_1-\delta]}g(x)>K_3+\delta \,\, \mbox{and} \, \, g_m^2:=\min_{x\in [K_3+\delta,  g_m]}g(x)<K_1-\delta.
\end{equation}
Condition \eqref{cond:1} and Lemma \ref{lem:Gv} imply that, for each $\alpha\in (0,1)$,
\begin{equation*}
\label{cond:2}
\max_{x\in [K_0,  K_1-\delta]}G(\alpha, x)\le g_m \quad \mbox{and} \, \, \min_{x\in [K_3+\delta,  g_m]} G(\alpha,x)\ge g_m^2.
\end{equation*}
In the next section,  for each $\alpha>\underline \alpha_0$,  we extend the interval $(K_1-\delta, K_3+\delta)$ keeping the property of stability of $K_1$ and $K_3$, and then introduce the smallest $\alpha$, for which infinite circulation of solution between $(K_0, K_1-\delta)$ and $(K_3+\delta, g_m)$ becomes impossible.

\subsection{Sequences of attracting sets}
\label{subsec:trap2}
For $\delta$ as in Assumption \ref{as:4eq1} and $\beta\in [0, 1)$, define now  $\underline\kappa(\beta)$, the largest point of maximum  of $G(\beta, \cdot)$ on $(K_{0}, \, K_1-\delta)$, and $\bar \kappa(\beta)$,  the smallest point of minimum of $G(\beta, \cdot)$ on $(K_3+\delta, \, g_m)$:
\begin{equation}
\label{def:kappa12mr}
\begin{split}
&\underline\kappa(\beta)=\sup\left\{ y\in  \left(K_{0}, \, K_1-\delta\right): G(\beta, y)=\sup_{x\in (K_{0}, K_1-\delta)}G(\beta, x) \right\},\\
&\bar \kappa(\beta)= \inf \left\{y\in  \left(K_3+\delta, \, g_{m}\right):   G(\beta, y)=\inf_{x\in (K_3+\delta, g_m)}G(\beta, x) \right\}.
\end{split}
\end{equation}
Now we introduce two convergent sequences of points, $(d_n(\beta))_{n\in \mathbb N}$ and $(c_n(\beta))_{n\in \mathbb N}$, located in $(K_0, K_1-\delta)$ and in $(K_3+\delta, \, g_m)$, respectively,
\begin{equation}
\label{def:dc4eq}
\begin{split}
&d_0:=K_{1}-\delta, \, c_0:=K_{3}+\delta,\,\,  \mbox{and, for} \,\,  k\in \mathbb N,\\
&d_k(\beta):=\inf\left\{x\in \left(K_{0}, \, d_{k-1}\right): \max_{y\in [x, d_{k-1}] }G(\beta, y)\le  c_{k-1}  \right\},  \\
&d_k=d_{k-1}, \quad \mbox{if} \,\max_{y\in [K_0, d_{k-1}] }G(\beta, y)\le  c_{k-1},\\
&c_k(\beta):=\sup\left\{x\in \left(c_{k-1}, \, g_m\right):  \min_{y\in [c_{k-1}, x] }G(\beta, y) \ge  d_k(\beta) \right\}, \\ &c_k=c_{k-1}, \quad \mbox{if} \quad \inf_{y\in [c_{k-1}, g_m) }G(\beta, y)\ge  d_{k}.
\end{split}
\end{equation}
Assume that  $k$ is the first moment for which $d_k=d_{k-1}$. Since $c_k \ge c_{k-1}$, $
  \max_{y\in [K_{0}, d_{k}] }G(\beta, y)= \max_{y\in [K_{0}, d_{k-1}] }G(\beta, y)\le  c_{k-1}\le c_k$, we have
  $d_{k+1}=d_k$, therefore the sequence $(d_n(\beta))_{n\in\mathbb N}$ stops after $k-1$ steps. As $ \min_{y\in [c_{k}, x] }G(\beta, y)\ge  \min_{y\in [c_{k-1}, x] }G(\beta, y)$ for $x\in \left(c_{k}, \, g_m\right)$, we get
  \[
c_{k+1}(\beta):=\sup\left\{x\in \left(c_k, \, g_m\right): \!\!\! \min_{y\in [c_k, x] }G(\beta, y)\ge
\!\!\! \min_{y\in [c_{k-1}, x] }G(\beta, y) \ge d_{k}=d_{k+1}\right\}=c_k,
\]
therefore the sequence $(c_n(\beta))_{n\in \mathbb N}$ stops after $k$ steps.

Define the set of positive integers for which the iterative procedure stops
\begin{equation}
\label{def:0k0}
\begin{split}
&\mathcal S(\beta):=\{k: d_k(\beta)=d_{k+1}(\beta) \,\, \mbox{or} \,\, c_k(\beta)=c_{k+1}(\beta)\},\\
&k_0(\beta):=\inf\{\mathcal S(\beta)\}, \quad \mbox{if} \quad \mathcal S(\beta) \neq\emptyset, \quad  k_0(\beta):=\infty, \,\, \mbox{if} \,\, \mathcal S(\beta)=\emptyset.
\end{split}
\end{equation}
The condition
\begin{equation}
\label{def:k0}
\begin{split}
k_0(\beta):=\inf\{k: d_k(\beta)=d_{k+1}(\beta) \,\, \mbox{or} \,\, c_k(\beta)=c_{k+1}(\beta)\}<\infty
\end{split}
 \end{equation}
is essential in constructing a wider attractive set, dependent on the control $\beta$.

The sequences $(c_n(\beta))_{n\in \mathbb N}$ and $(d_n(\beta))_{n\in \mathbb N}$ are strictly monotone until a moment $k_0(\beta)\le \infty$, therefore the following limits exist
\begin{equation}
\label{def:limdc}
\hat c(\beta)=\lim_{n\to \infty} c_n(\beta), \quad \hat d(\beta)=\lim_{n\to \infty} d_n(\beta).
\end{equation}
 We can  define  the interval $(\hat d(\beta), \hat c(\beta))$, which includes $(K_{1}-\delta, K_{3}+\delta)$ and  is  invariant under $G(\beta, \cdot)$. When  $k_0(\beta)=\infty$,  $\{ \hat d(\beta), \hat c(\beta) \}$ is a two-cycle. Indeed, when  \eqref{def:k0} does not hold, we proceed to the limit, as $k\to \infty$, in the equalities $G(\beta, d_k(\beta))=c_{k-1}(\beta)$ and $G(\beta, c_k(\beta))=d_{k}(\beta)$ and get $G(\beta, \hat d(\beta))=\hat c(\beta)$, $G(\beta, \hat c(\beta))=\hat d(\beta).$
 So, for this particular $\beta$ the interval of the initial values  with the desired convergence cannot be increased.
Moreover, the bound for control $\alpha$ is sharp: if it is smaller, a cycle rather than an equilibrium can be an attractor.

All the above is summarized in the following lemma.
\begin{lemma}
\label{lem:propseqcd}
Let Assumption~\ref{as:4eq1} hold,
$\underline \alpha_0$ be defined as in \eqref{def:alpha0}, sequences $(d_n)_{n\in \mathbb N}$, $(c_n)_{n\in \mathbb N}$  and numbers $\underline\kappa$, $\bar \kappa$  by \eqref{def:dc4eq}, and \eqref{def:kappa12mr}, $k_0$ by \eqref{def:0k0}, and $\beta\in \left(\underline \alpha_0, \, 1\right)$.
 \begin{enumerate}
  \item [{\rm (i)}] For each $1\le k=k(\beta)< k_0(\beta)\le \infty$ we have: $d_k(\beta)<x<K_{1}-\delta \implies G(\beta, x)<c_{k-1}(\beta)$ and  $c_k(\beta)>x>K_{3}+\delta \implies G(\beta, x)>d_{k}(\beta)$.

 \item  [{\rm (ii)}] For each $1\le k(\beta)< k_0(\beta)\le \infty$ we have: $G(\beta, \cdot): [d_k(\beta), c_{k-1}(\beta)]\to  [d_k(\beta), c_{k-1}(\beta)]$.

\item  [{\rm (iii)}] The sequences $(c_n(\beta))_{n\in \mathbb N}$ and $(d_n(\beta))_{n\in \mathbb N}$ are strictly monotone until the moment $k_0(\beta)\le \infty$,
 $c_n(\beta)\uparrow \hat c(\beta)=\lim_{k\to \infty} c_k(\beta)$ and $d_n(\beta)\downarrow \hat d(\beta)=\lim_{k\to \infty} d_k(\beta)$.

 \item  [{\rm (iv)}] $\hat c(\beta)\in (K_{3}+\delta,  \bar \kappa], \,\, \hat d(\beta)\in [\underline \kappa, \, K_{1}-\delta)$ and $G(\beta, \cdot): [\hat d(\beta), \hat c(\beta)]\to  [\hat d(\beta), \hat c(\beta)]$.

  \item  [{\rm (v)}] If  \eqref{def:k0} holds then sequences $(d_n(\beta))_{n\in \mathbb N}$ and $(c_n(\beta))_{n\in \mathbb N}$ stop after either $k_0$ or $k_0+1$ steps, while if  \eqref{def:k0} fails then   $\{ \hat c(\beta), \hat d(\beta) \}$, as defined in \eqref{def:limdc},  is a two-cycle for the function $y=G(\beta, x)$.

 \item [{\rm (vi)}]  For any $\tilde \beta_1>\tilde \beta_2>\underline \alpha_0$  and each $k\in \mathbb N$, we have $ d_k(\tilde \beta_2)>d_k(\tilde \beta_1)$ and $ c_k(\tilde \beta_1)>c_k(\tilde \beta_2)$.
   \end{enumerate}
      \end{lemma}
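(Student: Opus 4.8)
The plan is to establish the six items in the order listed, using as the engine the continuity of $G(\beta,\cdot)$ and its strict monotonicity in the control parameter from Lemma~\ref{lem:Gv}, the alternating sign of $g(x)-x$ from Assumption~\ref{as:4eq1}, and the central-interval invariance from Lemma~\ref{lem:delta01}. Parts (i)--(iii) are read off the defining recursion \eqref{def:dc4eq}, part (iv) adds the extremizer bounds and passes to the limit, part (v) separates into the termination computation (already displayed before the statement) and a limit argument producing the two-cycle, and part (vi) is an interlocking induction on $k$.

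For (i), I would note that $x\mapsto\max_{y\in[x,d_{k-1}]}G(\beta,y)$ is continuous and non-increasing, so the admissible set in the definition of $d_k(\beta)$ is an interval with left endpoint $d_k(\beta)$; letting $x\downarrow d_k(\beta)$ gives $G(\beta,x)\le c_{k-1}(\beta)$ on $(d_k(\beta),d_{k-1}(\beta))$, and the inductive use of (i) at index $k-1$ extends this to all of $(d_k(\beta),K_1-\delta)$. A one-sided limit at the infimum yields the boundary equality $G(\beta,d_k(\beta))=c_{k-1}(\beta)$, and the fact that just to the left of $d_k(\beta)$ the running maximum exceeds $c_{k-1}(\beta)$ gives the strict inequality in the interior; the statement for $c_k(\beta)$ is symmetric, with $G(\beta,\cdot)$ pulled below $d_k(\beta)$ on $(K_3+\delta,g_m)$. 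For (ii) I would split $[d_k(\beta),c_{k-1}(\beta)]$ into the left tail $[d_k(\beta),K_1-\delta]$, the centre $[K_1-\delta,K_3+\delta]$, and the right tail $[K_3+\delta,c_{k-1}(\beta)]$: on the centre Lemma~\ref{lem:delta01}(i) gives invariance, while on each tail one bound comes from (i) and the other from the sign of $g(x)-x$ (so that $G(\beta,x)>x$ on the left and $G(\beta,x)<x$ on the right).

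Monotonicity in (iii) is built into \eqref{def:dc4eq}: away from the stopping index each step strictly decreases $d_k$ or increases $c_k$, and boundedness (below by $\underline\kappa(\beta)$, above by $\bar\kappa(\beta)$) forces convergence to the limits \eqref{def:limdc}. The extremizer bounds in (iv) follow because, once the moving endpoint passes the largest maximizer $\underline\kappa(\beta)$ of $G(\beta,\cdot)$ on $(K_0,K_1-\delta)$, enlarging the interval no longer raises the running maximum, so the defining infimum cannot drop below $\underline\kappa(\beta)$; hence $\hat d(\beta)\ge\underline\kappa(\beta)$ and, symmetrically, $\hat c(\beta)\le\bar\kappa(\beta)$, while invariance of $[\hat d(\beta),\hat c(\beta)]$ comes from letting $k\to\infty$ in (ii). For (v), if \eqref{def:k0} holds the termination after $k_0$ or $k_0+1$ steps is exactly the computation preceding the lemma; if \eqref{def:k0} fails, I would pass to the limit in the boundary equalities $G(\beta,d_k(\beta))=c_{k-1}(\beta)$ and $G(\beta,c_k(\beta))=d_k(\beta)$ established in (i) to obtain $G(\beta,\hat d(\beta))=\hat c(\beta)$ and $G(\beta,\hat c(\beta))=\hat d(\beta)$, i.e. the two-cycle.

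Finally, (vi) I would prove by simultaneous induction on $k$, showing $d_k(\tilde\beta_1)\le d_k(\tilde\beta_2)$ and $c_k(\tilde\beta_2)\le c_k(\tilde\beta_1)$ and then upgrading to strict inequalities. The driving fact is Lemma~\ref{lem:Gv}(iii): on $(K_0,K_1-\delta)$, where $g(x)>x$, a larger parameter gives a strictly smaller $G$, namely $G(\tilde\beta_1,x)<G(\tilde\beta_2,x)$, and symmetrically on $(K_3+\delta,g_m)$. In the recursion for $d_k$ this combines with the inductively smaller threshold $c_{k-1}$ and the inductively larger left endpoint $d_{k-1}$, and all three monotone effects push the defining infimum in the same direction. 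I expect this interlocking induction to be the main obstacle: one must track three coupled monotonicities simultaneously, and upgrading each inequality to a strict one requires the strict form of Lemma~\ref{lem:Gv}(iii) together with the strict motion of the extremizers $\underline\kappa(\beta)$ and $\bar\kappa(\beta)$ with $\beta$. The secondary delicate point is the interior strictness and the boundary equality in (i), which feed the two-cycle conclusion of (v).
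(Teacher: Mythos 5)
Your proposal is correct and follows essentially the same route as the paper's (much terser) proof: parts (i)--(iii) read off the recursion \eqref{def:dc4eq} together with the sign of $g(x)-x$, part (iv) from the definitions of $\underline\kappa$, $\bar\kappa$ and the invariance in (ii), part (v) from the displayed termination computation and passage to the limit in the boundary equalities $G(\beta,d_k)=c_{k-1}$, $G(\beta,c_k)=d_k$, and part (vi) by the interlocking induction driven by the monotonicity of $G(\beta,x)$ in $\beta$ from Lemma~\ref{lem:Gv}(iii). Your extra care with the boundary equality in (i) and the extremizer bounds in (iv) only makes explicit what the paper leaves implicit; the appeal to ``strict motion of the extremizers'' in (vi) is not actually needed, since strictness already follows from the strict ordering of $G(\tilde\beta_1,\cdot)$ and $G(\tilde\beta_2,\cdot)$ plus continuity.
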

      The  next lemma is an extension of  Lemma \ref{lem:delta01}, it proves convergence of a solution
to an equilibrium when the initial value is in the interval dependent on $\underline \alpha_0$, which is an extension of   $(K_{1}-\delta, K_{3}+\delta)$. Note that a solution to \eqref{eq:PBCvar} converges to $K_0$ or $K_2$ only in the cases of constant solutions with $x_0=K_0$ or $x_0=K_2$, respectively.

\begin{lemma}
\label{lem:x0beta0}
Let Assumption \ref{as:4eq1} hold, $\underline \alpha_0$ be defined as in \eqref{def:alpha0}, $ \alpha_*\in (\underline \alpha_0, 1)$,
$\alpha^*\in (\alpha_*, 1)$, $\alpha_n\in [\alpha_*, \alpha^*]$ for all $n\in \mathbb N$.
Let $\hat d(\alpha_*)$, $\hat c(\alpha_*)$  be denoted as in \eqref{def:limdc}.
Then  any solution  to \eqref{eq:PBCvar} with   $x_0\in (\hat d(\alpha_*), \hat c(\alpha_*))$ converges  to an equilibrium. 
\end{lemma}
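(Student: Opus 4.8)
The plan is to first establish that $(\hat d(\alpha_*), \hat c(\alpha_*))$ is forward invariant under the whole family of controlled maps $G(\alpha_n, \cdot)$, $\alpha_n \in [\alpha_*, \alpha^*]$, and then to rule out infinite circulation so that every solution is eventually captured by the inner trap $(K_1-\delta, K_3+\delta)$ of Lemma~\ref{lem:delta01}. Throughout I write $\hat d = \hat d(\alpha_*)$, $\hat c = \hat c(\alpha_*)$.

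For invariance I would split $[\hat d, \hat c]$ into the left part $(\hat d, K_1-\delta)$, the inner part $[K_1-\delta, K_3+\delta]$, and the right part $(K_3+\delta, \hat c)$. On the left part $g(x)>x$, so by Lemma~\ref{lem:Gv}(iii) together with $\alpha_n \ge \alpha_*$ we get $x < G(\alpha_n, x) \le G(\alpha_*, x) \le \hat c$, the last inequality being Lemma~\ref{lem:propseqcd}(iv); hence $G(\alpha_n, x) \in (\hat d, \hat c)$. The right part is symmetric, using $g(x)<x$ there, and the inner part is handled directly by Lemma~\ref{lem:delta01}(i). Thus a solution with $x_0 \in (\hat d, \hat c)$ remains there for all $n$.

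Next I would observe that if the solution ever enters $(K_1-\delta, K_3+\delta)$, then Lemma~\ref{lem:delta01} (with the degenerate constant case $x_0=K_2$ giving convergence to $K_2$, and boundary values covered by Lemma~\ref{lem:LL+2}) yields convergence to $K_1$ or $K_3$. So we may assume it never does, i.e.\ it stays in the outer annulus $(\hat d, K_1-\delta) \cup (K_3+\delta, \hat c)$. On each component the map is strictly monotone toward the centre, $G(\alpha_n, x)>x$ on the left and $G(\alpha_n, x)<x$ on the right by Lemma~\ref{lem:Gv}(iii), so the solution cannot remain in one component forever: a monotone bounded solution converges, and by Lemma~\ref{lem:conv} its limit is an equilibrium, of which there is none in either component. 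Hence, under the no-entry assumption, the solution must cross over the inner trap between the two components infinitely often.

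The heart of the argument, and the step I expect to be the main obstacle, is ruling this out by a level-descent estimate built from the sequences $(d_k), (c_k)$ of \eqref{def:dc4eq} with $\beta = \alpha_*$. To a left-component point I assign the level $k$ with $x \in [d_k, d_{k-1})$, and to a right-component point the level $k$ with $x \in (c_{k-1}, c_k]$; since $d_k \downarrow \hat d$ and $c_k \uparrow \hat c$ by Lemma~\ref{lem:propseqcd}(iii), every interior point has a finite level $\ge 1$. If $x_n$ is on the left at level $k$, the definition of $d_k$ gives $G(\alpha_*, x_n) \le c_{k-1}$, and monotonicity in the control ($g(x_n)>x_n$, Lemma~\ref{lem:Gv}(iii)) yields $x_{n+1} \le c_{k-1}$: from level $1$ this forces $x_{n+1} \le c_0 = K_3+\delta$, so no crossing to the right is possible, while from level $k\ge 2$ any right landing point lies in $(c_0, c_{k-1}]$ and so has right-level $\le k-1$. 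The symmetric estimate from the definition of $c_k$ shows a right-to-left crossing does not raise the level, and within a component the strict monotonicity only lowers it. Consequently the level at which the solution enters the left component strictly decreases by at least one on each round trip; being a positive integer it reaches $1$ after finitely many trips, after which no further crossing is possible and the still-increasing solution is forced into $[K_1-\delta, K_3+\delta]$, contradicting the no-entry assumption. Hence the solution enters the inner trap in finite time and converges to an equilibrium. The only delicate bookkeeping is the half-open level intervals and the boundary values $K_1-\delta$, $K_3+\delta$; note that the argument uses $k_0(\alpha_*)<\infty$ nowhere, so it also covers the case where $\{\hat d, \hat c\}$ is a two-cycle lying on the boundary of the invariant interval.
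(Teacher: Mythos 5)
Your proposal is correct and follows essentially the same route as the paper's own proof: reduce via Lemma~\ref{lem:delta01} to initial values in $(\hat d(\alpha_*), K_1-\delta)\cup(K_3+\delta,\hat c(\alpha_*))$, and then use the sequences $(d_k(\alpha_*))$, $(c_k(\alpha_*))$ together with Lemma~\ref{lem:propseqcd}(i) to show that each crossing between the two outer components drops the solution one level closer to the inner trap, so only finitely many crossings can occur. Your explicit level function and the observation that the argument does not need $k_0(\alpha_*)<\infty$ are just a cleaner packaging of exactly the descent the paper performs.
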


Part (v) of  Lemma~\ref{lem:propseqcd} states that when $k_0(\alpha)=\infty$, the function $G(\alpha, \cdot)$ has a
two-cycle, so the condition  $ \alpha\in (\underline \alpha_0, 1)$  might not be sufficient for convergence of all solutions to an equilibrium.   Example~\ref{ex:infder} in Section~\ref{sec:ex} illustrates this.  In order to fix this problem, we introduce a lower bound $\underline \alpha$ for $\alpha$ which can be
larger than $\underline \alpha_0$
\begin{equation}
\label{def:underalphacd}
\underline \alpha:=\inf \left\{ \beta \in (\underline \alpha_{0},1): \max_{x\in [K_{0}, K_{1}]}G(\beta, x) <  \hat c(\beta) ~ \mbox{or} \!\!\!\!\inf_{y\in (K_{3}, g_m]}G(\beta, y) > \hat d(\beta) \right\},
\end{equation}
where $\hat c(\beta)$ and $\hat d(\beta)$ are defined as in \eqref{def:limdc}.
To see that  the set in \eqref{def:underalphacd} is non-empty, we note first that   $\hat c(\beta)\in (K_{3},  \bar \kappa], \,\, \hat d(\beta)\in [\underline \kappa, \, K_{1})$, see Lemma \ref{lem:propseqcd}~(iv). When $\beta$ is close to 1, $ \max_{x\in [K_{0}, K_{1}]}G(\beta, x) \approx K_{1}<K_{3}\le \hat c(\beta)$ and $\inf_{y\in (K_{3}, g_m) }G(\beta, y) \approx K_{3} >K_{1}\ge \hat d(\beta)$, so such $\beta$ belongs to the set in the right-hand side of \eqref{def:underalphacd}.  Applying Lemma~\ref{lem:propseqcd}~(vi), we conclude that each  $\alpha\in (\underline \alpha, 1)$ belongs to the set inside of the braces in \eqref{def:underalphacd}.

The following lemma highlights relations between $\underline \alpha_0$, $k_0(\alpha)$ and $\underline \alpha$.
 \begin{lemma}
\label{lem:k0}
Let $\underline \alpha_0$, $k_0(\beta)$ and $\underline \alpha$ be defined by \eqref{def:alpha0},
\eqref{def:0k0} and \eqref{def:underalphacd}.
 \begin{enumerate}
 \item  [(i)] $\underline \alpha_0=\underline \alpha$ if and only if   for  each $\alpha>\underline \alpha_{0}$, condition \eqref{def:k0} holds.
    \item  [(ii)] $\underline \alpha_0<\underline \alpha$  if and only if  there exists $\alpha>\underline \alpha_0$ such that  condition \eqref{def:k0} does not hold, i.e. $k_0(\alpha)=\infty$.
      \end{enumerate}
      \end{lemma}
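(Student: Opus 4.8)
The plan is to reduce everything to a comparison of two thresholds. Note first that $\underline\alpha_0\le\underline\alpha$ always, since the set in \eqref{def:underalphacd} is contained in $(\underline\alpha_0,1)$; consequently part (ii) is exactly the logical negation of part (i), and it suffices to prove (i). Writing $A$ for the set inside the braces in \eqref{def:underalphacd}, so that $\underline\alpha=\inf A$, and $B:=\{\beta\in(\underline\alpha_0,1):k_0(\beta)<\infty\}$, I would recast (i) as the assertion $\inf A=\inf B$: indeed $\inf B\ge\underline\alpha_0$ since $B\subseteq(\underline\alpha_0,1)$, and ``$k_0(\alpha)<\infty$ for every $\alpha>\underline\alpha_0$'' holds iff $\inf B=\underline\alpha_0$, so once $\inf A=\inf B$ is known, (i) reads $\underline\alpha=\underline\alpha_0\iff\inf B=\underline\alpha_0$, which is immediate from $\underline\alpha_0\le\inf B\le\inf A=\underline\alpha$.

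I would first establish $A\subseteq B$ together with the upward closedness of $B$. For $A\subseteq B$: if $k_0(\beta)=\infty$ then by Lemma~\ref{lem:propseqcd}(v) the pair $\{\hat d(\beta),\hat c(\beta)\}$ is a two-cycle, i.e.\ $G(\beta,\hat d(\beta))=\hat c(\beta)$ and $G(\beta,\hat c(\beta))=\hat d(\beta)$; since $\hat d(\beta)\in[\underline\kappa,K_1-\delta)\subset[K_0,K_1]$ and $\hat c(\beta)\in(K_3+\delta,\bar\kappa]\subseteq(K_3,g_m]$ by Lemma~\ref{lem:propseqcd}(iv), this forces $\max_{x\in[K_0,K_1]}G(\beta,x)\ge\hat c(\beta)$ and $\inf_{y\in(K_3,g_m]}G(\beta,y)\le\hat d(\beta)$, i.e.\ $\beta\notin A$; the contrapositive gives $A\subseteq B$, hence $\inf B\le\inf A=\underline\alpha$. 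For upward closedness, assume $k_0(\beta)<\infty$ and $\beta'>\beta$: at termination (say via the $d$-sequence) one has $\max_{y\in[K_0,d_{k_0}(\beta)]}G(\beta,y)\le c_{k_0}(\beta)$, and using $d_{k_0}(\beta')\le d_{k_0}(\beta)$, $c_{k_0}(\beta')\ge c_{k_0}(\beta)$ from Lemma~\ref{lem:propseqcd}(vi) together with the fact that $G(\cdot,x)$ is nonincreasing in its first argument on $[K_0,K_1]$ (where $g(x)\ge x$, by Lemma~\ref{lem:Gv}), one obtains $\max_{y\in[K_0,d_{k_0}(\beta')]}G(\beta',y)\le c_{k_0}(\beta')$, which forces $d_{k_0+1}(\beta')=d_{k_0}(\beta')$ and so $k_0(\beta')\le k_0(\beta)<\infty$; the $c$-termination case is symmetric.

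The crux is the reverse comparison $\inf B\ge\underline\alpha$. Suppose not, so that $I:=(\inf B,\underline\alpha)$ is a nondegenerate interval contained in $B\setminus A$. For $\beta\in I$ the construction terminates, and I would upgrade the termination inequalities, which only live on $(K_0,K_1-\delta)$ and $(K_3+\delta,g_m)$, to the full windows of \eqref{def:underalphacd}. On $[K_0,\hat d(\beta)]$ termination gives $G(\beta,\cdot)\le\hat c(\beta)$; on $[\hat d(\beta),K_1-\delta]$ the interval-chain bounds of Lemma~\ref{lem:propseqcd}(i)--(ii) give the same; and on $[K_1-\delta,K_1]$ the Lipschitz condition of Assumption~\ref{as:4eq1} yields $G(\beta,x)\le K_1+(K_1-x)[L-\beta(L+1)]\le K_1+\delta<K_3+\delta<\hat c(\beta)$, using $\beta>\underline\alpha_0\ge(L-1)/(L+1)$ and $\hat c(\beta)\in(K_3+\delta,\bar\kappa]$. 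Hence $\max_{x\in[K_0,K_1]}G(\beta,x)\le\hat c(\beta)$; combined with $\beta\notin A$, which gives the reverse inequality $\ge$, this forces $\max_{x\in[K_0,K_1]}G(\beta,x)=\hat c(\beta)$ for every $\beta\in I$. Since $\hat c(\beta)>K_3+\delta>K_1+\delta$ exceeds all values of $G(\beta,\cdot)$ on $[K_1-\delta,K_1]$, the maximum is attained on $[K_0,K_1-\delta]$, so it equals $M_1(\beta):=\max_{x\in[K_0,K_1-\delta]}G(\beta,x)$. But $M_1(\beta)$ is strictly decreasing in $\beta$ (at an interior maximizer $x$ one has $g(x)>x$, and $G(\beta_2,x)-G(\beta_1,x)=(\beta_2-\beta_1)(x-g(x))<0$ for $\beta_2>\beta_1$), while $\hat c(\beta)$ is nondecreasing by Lemma~\ref{lem:propseqcd}(vi); the identity $M_1(\beta)=\hat c(\beta)$ on a nondegenerate $I$ is impossible. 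This contradiction yields $\inf B\ge\underline\alpha$, hence $\inf A=\inf B$, proving (i); and (ii) is its negation.

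The main obstacle is exactly this last step: the greedy construction, upon terminating, produces only the non-strict bounds $\max_{[K_0,\hat d]}G\le\hat c$ and $\inf_{[\hat c,g_m)}G\ge\hat d$ on the open sequence-windows, whereas membership in $A$ demands strict inequalities over the larger windows $[K_0,K_1]$ and $(K_3,g_m]$. Reconciling the two requires both the one-sided Lipschitz estimates near $K_1$ and $K_3$ (to enlarge the windows without overshooting $\hat c$ or $\hat d$) and the strict monotonicity of the attained maximum in $\beta$ (to collapse the at-most-one-point discrepancy $B\setminus A$), so that the infima of $A$ and $B$ coincide.
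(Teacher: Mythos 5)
Your proof is correct, but it takes a genuinely different route from the paper's in the direction that matters. The paper argues a pointwise equivalence: for each $\beta>\underline\alpha_0$, termination ($k_0(\beta)<\infty$) holds if and only if $\beta$ belongs to the set in \eqref{def:underalphacd}, and both parts of the lemma follow at once. Your inclusion ``membership implies termination'' (your $A\subseteq B$, obtained via the two-cycle of Lemma~\ref{lem:propseqcd}(v)) is essentially the paper's forward direction in contrapositive form. Where you diverge is the converse: the paper simply asserts that termination yields the strict inequality $\max_{x\in[K_0,K_1]}G(\beta,x)<c_{k_0-1}(\beta)<\hat c(\beta)$, glossing over the facts that the stopping rule in \eqref{def:dc4eq} only produces a non-strict bound on the smaller window $[K_0,d_{k_0}(\beta)]$ and that $c_{k_0-1}(\beta)$ need not lie strictly below $\hat c(\beta)$. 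You avoid the pointwise claim entirely: you prove $B$ is upward closed (via Lemma~\ref{lem:propseqcd}(vi) and the monotonicity of $G$ in its first argument), and then exclude a nondegenerate gap $(\inf B,\underline\alpha)\subseteq B\setminus A$ by forcing the identity $\max_{x\in[K_0,K_1-\delta]}G(\beta,x)=\hat c(\beta)$ there and playing the strict decrease of the left-hand side in $\beta$ against the monotone increase of the right-hand side. This buys a proof that is robust exactly where the paper is loose (strict versus non-strict bounds, and extending from the sequence windows to $[K_0,K_1]$ and $(K_3,g_m]$, which your one-sided Lipschitz estimate on $[K_1-\delta,K_1]$ handles cleanly), at the cost of extra machinery; the conclusion $\inf A=\inf B$ is formally weaker than the paper's pointwise equivalence but is precisely what the lemma requires. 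The only points worth tightening are cosmetic: when both termination modes occur for different $\beta$ in the interval $I$, you should note that at least one of the two corresponding subsets of $I$ contains two points, so the monotonicity contradiction still applies; and the degenerate case $\delta=0$ should be mentioned, where $[K_1-\delta,K_1]$ collapses and the estimate there is trivial.
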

			
We proceed now to the main result of this section.

\begin{theorem}
\label{thm:detcd}
Let Assumption~\ref{as:4eq1} hold, $\underline \alpha$ be defined as in \eqref{def:underalphacd}, $ \alpha_*\in (\underline \alpha, 1)$, $\alpha^*\in (\alpha_*, 1)$, $\alpha_n\in [\alpha_*, \alpha^*]$ for all $n\in \mathbb N$. Let $x$ be a
solution  to \eqref{eq:PBCvar} with $x_0 \in (K_0,\infty)$. Then $x$ converges  to an equilibrium. 
\end{theorem}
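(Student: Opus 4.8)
The plan is to show that every solution is eventually absorbed by the invariant interval $(\hat d(\alpha_*),\hat c(\alpha_*))$ from \eqref{def:limdc}, after which Lemma~\ref{lem:x0beta0} (whose hypotheses match ours, $\alpha_n\in[\alpha_*,\alpha^*]$) forces convergence to an equilibrium. First I would record two preliminary facts. Since $G(\alpha_n,x)=(1-\alpha_n)g(x)+\alpha_n x$ is a strict convex combination of $g(x)\in[K_0,K_4)$ and $x\in(K_0,K_4)$, the orbit never leaves $(K_0,K_4)$; when $K_4<\infty$ this is how I read the condition $x_0\in(K_0,\infty)$. Write $\hat d:=\hat d(\alpha_*)$, $\hat c:=\hat c(\alpha_*)$, and recall from Lemma~\ref{lem:propseqcd}(iv) that $\hat d\in[\underline\kappa,K_1-\delta)$ and $\hat c\in(K_3+\delta,\bar\kappa]$, so $K_0<\hat d<K_1-\delta$, $K_3+\delta<\hat c<g_m$, and $(K_1-\delta,K_3+\delta)\subset(\hat d,\hat c)$. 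If $x_0\in(\hat d,\hat c)$ we are done immediately, so I assume otherwise.

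The second step is to confine the orbit. Using \eqref{cond:1} and the consequence $\max_{[K_0,K_1-\delta]}G(\alpha,\cdot)\le g_m$ noted after it, together with the invariance of $(\hat d,\hat c)$ and $g(x)<x$ on $(K_3,K_4)$, I would check that $x_n\in(K_0,g_m]$ implies $x_{n+1}\in(K_0,g_m]$; and for $x_0>g_m$ the orbit is strictly decreasing while it stays above $g_m$ (there $g(x)<x$), so, there being no equilibrium in $(K_3,K_4)$, Lemma~\ref{lem:conv} shows it must drop into $(K_0,g_m]$ in finitely many steps. Hence I may assume the whole orbit lies in $(K_0,g_m]=(K_0,\hat d]\cup(\hat d,\hat c)\cup[\hat c,g_m]$. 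On the left block $(K_0,\hat d]\subset(K_0,K_1)$ we have $g(x)>x$, so $G(\alpha_n,x)>x$ and the orbit strictly increases there; on the right block $[\hat c,g_m]\subset(K_3,K_4)$ we have $g(x)<x$, so it strictly decreases there. Since $g(x)-x$ keeps a constant sign on each block, neither contains an equilibrium, so by Lemma~\ref{lem:conv} the orbit cannot remain in either block forever.

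Now I invoke the defining property of $\underline\alpha$: as recorded after \eqref{def:underalphacd}, for $\alpha_*\in(\underline\alpha,1)$ at least one of the two inequalities there holds, and by the monotonicity Lemma~\ref{lem:propseqcd}(vi) together with Lemma~\ref{lem:Gv}(iii) the same inequality persists with $\alpha_n$ in place of $\alpha_*$ and with $\hat c,\hat d$ as bounds. Suppose first that $\max_{x\in[K_0,K_1]}G(\alpha_n,x)<\hat c$ for all $n$. Then the left block cannot overshoot $\hat c$: the first time the increasing orbit leaves $(K_0,\hat d]$ it jumps from some $x_m\le\hat d$ to $x_{m+1}=G(\alpha_{m+1},x_m)\in(\hat d,\hat c)$, i.e.\ into the trap. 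Meanwhile, whenever the orbit is in the right block it strictly decreases and, having no equilibrium to converge to, eventually falls below $\hat c$, landing in $(K_0,\hat c)=(K_0,\hat d]\cup(\hat d,\hat c)$; in the worst case this is the left block, from which it then enters the trap as above. Thus after finitely many region changes the orbit reaches $(\hat d,\hat c)$.

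The complementary case $\inf_{y\in(K_3,g_m]}G(\alpha_n,y)>\hat d$ for all $n$ is the mirror image, and here the confinement to $(K_0,g_m]$ is essential: now it is the right block $[\hat c,g_m]\subset(K_3,g_m]$ that cannot overshoot $\hat d$, so each descent from the right block lands in $(\hat d,\hat c)$ directly, while each ascent from the left block lands in $(\hat d,g_m]$, i.e.\ in the trap or the right block. Either way the orbit is driven into $(\hat d,\hat c)$ in finitely many steps. I expect the main obstacle to be precisely ruling out infinite circulation between the left and right blocks: the sign conditions alone permit a solution to oscillate between $(K_0,K_1-\delta)$ and $(K_3+\delta,g_m)$ forever---indeed $\{\hat d,\hat c\}$ is a genuine two-cycle when $k_0(\alpha_*)=\infty$, by Lemma~\ref{lem:propseqcd}(v)---and it is exactly the one-sided no-overshoot inequality built into \eqref{def:underalphacd} that breaks the loop, forcing one of the two blocks to feed directly into the trap rather than back into the opposite block.
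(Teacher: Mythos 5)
Your argument is correct and follows essentially the same route as the paper: use the defining inequality of $\underline\alpha$ (in whichever of its two one-sided forms holds for $\alpha_*$, transferred to all $\alpha_n\ge\alpha_*$ via Lemma~\ref{lem:Gv}(iii)) to show the orbit is driven into $(\hat d(\alpha_*),\hat c(\alpha_*))$ after finitely many block changes, then invoke Lemma~\ref{lem:x0beta0}. The paper's own proof is terser---it picks a finite level $c_{k_1}(\alpha_*)$ above the maximum and ``follows the scheme'' of Lemma~\ref{lem:x0beta0}---but your confinement of the orbit to $(K_0,g_m]$ and the treatment of $x_0>g_m$ are exactly the ``other cases'' it leaves implicit.
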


\begin{remark}
\label{rem:exG}
Based on results of Section \ref{subsec:trap2}, we conclude that $\underline \alpha$ defined as in \eqref{def:underalphacd} is the best deterministic lower bound for the control, which provides attraction of the solution  to either $K_1$ or $K_3$. Even though in general it is not so easy to find it, in some situations  it is possible.  In Section~\ref{subsec:exG} of the Appendix we demonstrate that when  function $g$ is differentiable outside of $[d_1(0), c_1(0)]$ we can find the lower threshold $\underline \alpha_1$ which is calculated based on $\underline \alpha_0$ and derivatives of $g$. Here $d_1(0)$ and $c_1(0)$ are computed by \eqref{def:dc4eq} for $g(x)$ instead of $G(\beta, x)$ and sometimes can be found easily.
\end{remark}

\subsection{Stochastically perturbed control}
\label{subsec:stoch}

We start by introducing a complete filtered probability space $(\Omega, {\mathcal{F}}$, $\{{\mathcal{F}}_n\}_{n \in
\mathbb N}, {\mathbb P})$, where the filtration $(\mathcal{F}_n)_{n \in \mathbb{N}}$ is naturally generated by
the sequence of independent identically distributed random variables $(\xi_n)_{n\in\mathbb{N}}$, i.e.
$\mathcal{F}_{n} = \sigma \left\{\xi_{1},  \dots, \xi_{n}\right\}$.
The standard abbreviation ``a.s.'' is further used for either ``almost sure" or ``almost surely"
with respect to a fixed probability measure $\mathbb P$, and
``i.i.d.'' for  ``independent identically distributed'', to describe random variables.
For details of stochastic concepts and notations we refer the reader to \cite{Shiryaev96}.

In many real-world models, in particular, in population dynamics, it is natural to assume that noises are bounded, which we describe in the following assumption, later a noise amplitude will be introduced.

\begin{assumption}
\label{as:noise}
$(\xi_n)_{n\in \mathbb N}$ is a sequence of independent identically distributed  random variables satisfying $|\xi_n|\le 1$, $\forall n\in \mathbb N$. Moreover, for each $\varepsilon>0$, $\mathbb P \{\xi\in (1-\varepsilon, \, 1] \}>0$.
\end{assumption}

We consider a control perturbed by  an additive noise,  $\alpha_n=\alpha+\ell \xi_{n+1}$,
\begin{equation}
\label{eq:PBCstoch}
x_{n+1}=g(x_n)-(\alpha+\ell \xi_{n+1})(g(x_n)-x_n), \quad x_0\in \mathbb R, \quad n\in {\mathbb N}_0.
\end{equation}
Here $\alpha \in (0,1)$, random variables  $\xi_n$  satisfy Assumption \ref{as:noise},  $\ell>0$ is a noise amplitude.

Let $ \underline \alpha_0$ and $\underline \alpha$ be defined as in \eqref{def:alpha0} and \eqref{def:underalphacd}, respectively.
In this section  we decrease  the lower bound $\underline \alpha$ for a control parameter $\alpha$  proposed in the previous sections, applying stochastic perturbations. Set
\begin{equation}
\label{def:alphaell1}
\begin{split}
&\alpha_n=\alpha+\ell\xi_n, \quad 
\alpha\in \bigl(0.5(\underline \alpha_0+\underline \alpha),  \,\, \underline \alpha\bigr), \quad \ell\in \bigl(\underline \alpha-\alpha, \,\, \min \{ 1-\alpha,\alpha-\underline \alpha_0\}\bigr).
\end{split}
\end{equation}
Since $\underline \alpha\ge \underline \alpha_0$, see Lemma \ref{lem:k0},  any $\alpha$ satisfying \eqref{def:alphaell1} will be smaller than $\underline \alpha$ which is the best lower estimate for the deterministic case.

The following lemma was proved in \cite{BKR,BRAllee} and is a corollary of the Borel-Cantelli Lemma.
\begin{lemma}
\label{lem:topor}
Let sequence $(\xi_n)_{n\in \mathbb N}$ satisfy Assumption \ref{as:noise}. Then, for each nonrandom $\mathbf J\in \mathbb N$, $\varepsilon>0$ and a random moment $\mathcal M$, there is a random moment $\mathcal N=\mathcal N(\mathbf J, \varepsilon, \mathcal M) \ge {\mathcal M}$, $\mathcal N \in {\mathbb N}$, such  that 
$
\mathbb P\{\xi_{\mathcal N+i} \ge 1-\varepsilon, \,\,  i=0, 1, \dots, \mathbf J  \}=1.
$
\end{lemma}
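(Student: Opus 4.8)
The plan is to prove Lemma~\ref{lem:topor} as a direct application of the (second) Borel--Cantelli lemma to a sequence of independent events, each asserting that a consecutive block of $\mathbf J+1$ noise values all exceed $1-\varepsilon$. First I would use Assumption~\ref{as:noise} to fix the positivity of the one-step probability: set $p:=\mathbb P\{\xi_1>1-\varepsilon\}>0$, which holds because $\mathbb P\{\xi\in(1-\varepsilon,1]\}>0$ and the $\xi_n$ are identically distributed. The key construction is to tile $\mathbb N$ (starting after $\mathcal M$, or rather after a deterministic time dominating the construction) into disjoint consecutive blocks of length $\mathbf J+1$, say $B_k=\{k(\mathbf J+1)+1,\dots,(k+1)(\mathbf J+1)\}$, and to define the events
\begin{equation*}
A_k:=\bigl\{\xi_i>1-\varepsilon \ \text{for all } i\in B_k\bigr\}.
\end{equation*}
Because the $\xi_n$ are i.i.d.\ and the blocks $B_k$ are pairwise disjoint, the events $(A_k)_k$ are mutually independent, and by the i.i.d.\ property each has probability $\mathbb P(A_k)=p^{\,\mathbf J+1}>0$, a constant independent of $k$.

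Next I would invoke the divergence of the series: since $\sum_{k}\mathbb P(A_k)=\sum_k p^{\,\mathbf J+1}=\infty$ and the $A_k$ are independent, the second Borel--Cantelli lemma yields $\mathbb P\{A_k \text{ occurs for infinitely many } k\}=1$. Thus, almost surely, there are infinitely many blocks on which the noise stays above $1-\varepsilon$ for $\mathbf J+1$ consecutive indices. The final step is to convert this into the random moment $\mathcal N$. Given the random moment $\mathcal M$, one defines $\mathcal N$ to be the left endpoint of the first block $B_k$ with $k(\mathbf J+1)+1\ge\mathcal M$ on which $A_k$ occurs; since almost surely infinitely many $A_k$ occur, such a block exists a.s., so $\mathcal N\ge\mathcal M$ is well defined with probability one and satisfies $\xi_{\mathcal N+i}\ge 1-\varepsilon$ for $i=0,1,\dots,\mathbf J$. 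This gives the claimed identity $\mathbb P\{\xi_{\mathcal N+i}\ge 1-\varepsilon,\ i=0,\dots,\mathbf J\}=1$.

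The one point requiring care, and the main (minor) obstacle, is the presence of the \emph{random} moment $\mathcal M$ rather than a deterministic one. Independence of the $A_k$ is what makes the second Borel--Cantelli lemma applicable, and this independence would be destroyed if the blocks were positioned at a random, $\mathcal F$-dependent offset. The clean way around this is the tiling above into \emph{deterministic} disjoint blocks: the conclusion ``infinitely many $A_k$ occur a.s.''\ is proved with no reference to $\mathcal M$, and only afterwards does one select, \emph{for each outcome}, the first qualifying block lying beyond $\mathcal M(\omega)$. Since the ``infinitely often'' event is an almost sure event not depending on $\mathcal M$, intersecting it with any (random) tail requirement still leaves a set of full measure, so $\mathcal N$ is a.s.\ finite regardless of how $\mathcal M$ is generated. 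I would remark that the strict inequality $\xi_i>1-\varepsilon$ in $A_k$ trivially implies the non-strict $\xi_{\mathcal N+i}\ge 1-\varepsilon$ in the statement, so no boundary issue arises; alternatively one may run the argument with $\varepsilon'<\varepsilon$ to obtain the non-strict bound directly.
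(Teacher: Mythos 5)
Your proof is correct and follows exactly the route the paper indicates: the paper gives no proof of Lemma~\ref{lem:topor}, citing \cite{BKR,BRAllee} and noting only that it is ``a corollary of the Borel--Cantelli lemma,'' which is precisely your argument. The tiling into deterministic disjoint blocks of length $\mathbf J+1$, the second Borel--Cantelli lemma applied to the independent events $A_k$ with $\mathbb P(A_k)=p^{\mathbf J+1}>0$, and the subsequent selection of the first qualifying block beyond the random moment $\mathcal M$ constitute the standard and complete proof.
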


\begin{theorem}
\label{thm:stoch2sides}
Let Assumptions \ref{as:4eq1} and \ref{as:noise}  hold,
$\underline \alpha_0$  and $\underline \alpha$ be defined as in \eqref{def:alpha0} and  \eqref{def:underalphacd}, $\alpha$ and $\ell$ satisfy \eqref {def:alphaell1}. Then a solution  $x$ to \eqref{eq:PBCstoch} with $x_0\in (K_0, \infty)$ converges to one of the equilibrium points 
with the  total probability one.
\end{theorem}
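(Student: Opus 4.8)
The plan is to combine a pathwise confinement argument, valid for every realization because all controls exceed $\underline \alpha_0$, with a Borel--Cantelli ``burst'' of high control supplied by Lemma~\ref{lem:topor}, which forces the solution into the absorbing trap $(K_1-\delta,K_3+\delta)$. Write $\underline\beta:=\alpha-\ell$ and $\bar\beta:=\alpha+\ell$. The constraints \eqref{def:alphaell1} guarantee $\underline\alpha_0<\underline\beta$ and $\bar\beta<1$ (from $\ell<\alpha-\underline\alpha_0$ and $\ell<1-\alpha$), and, crucially, $\bar\beta>\underline\alpha$ (from $\ell>\underline\alpha-\alpha$); the requirement $\alpha>0.5(\underline\alpha_0+\underline\alpha)$ is exactly what makes the admissible interval for $\ell$ nonempty. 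Since $|\xi_n|\le 1$, every realized control satisfies $\alpha_n\in[\underline\beta,\bar\beta]\subset(\underline\alpha_0,1)$.

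First I would fix a realization and establish confinement. Using Lemma~\ref{lem:Gv}(iii), the interval $I:=[\hat d(\underline\beta),\hat c(\underline\beta)]$ (with $\hat d,\hat c$ as in \eqref{def:limdc}) is invariant under $G(\alpha_n,\cdot)$ for every $\alpha_n\ge\underline\beta$: on the set where $g(x)>x$ a larger control keeps $G(\alpha_n,x)$ between $x$ and $G(\underline\beta,x)$, and symmetrically where $g(x)<x$, so no boundary of $I$ can be crossed. Because $\hat d(\underline\beta)>K_0$ and $\hat c(\underline\beta)\le\bar\kappa(\underline\beta)<g_m$ (Lemma~\ref{lem:propseqcd}(iv) and \eqref{cond:1}), $I$ is a compact subinterval of $(K_0,\infty)$ containing $U:=(K_1-\delta,K_3+\delta)$. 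The entry of an arbitrary $x_0\in(K_0,\infty)$ into $I$ in finitely many steps follows from the sign structure of $g(x)-x$ together with \eqref{cond:1}, exactly as in the confinement part of Lemmas~\ref{lem:alpha01} and \ref{lem:x0beta0}, which use only $\alpha_n>\underline\alpha_0$; here I would record that $g$ maps $(K_0,K_4)$ into itself and that $G(\alpha_n,\cdot)$ is a convex combination of $g(x_n)$ and $x_n$, so the solution never leaves $(K_0,K_4)$ and is pushed downward from above $g_m$. Thus there is a pathwise finite time $\mathcal M$ with $x_n\in I$ for all $n\ge\mathcal M$.

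The heart of the proof is a uniform ``driving'' estimate, and this is the step I expect to be the main obstacle, since the controls vary within the burst and the orbit may fold several times between the left block $(K_0,K_1-\delta)$ and the right block $(K_3+\delta,g_m)$ before being captured. I claim there exist $\varepsilon>0$ and a nonrandom $\mathbf J\in\mathbb N$ such that, whenever the controls $\beta_0,\dots,\beta_{\mathbf J}$ all lie in $[\underline\beta',\bar\beta]$ with $\underline\beta':=\alpha+\ell(1-\varepsilon)>\underline\alpha$ (such $\varepsilon$ exists because $\bar\beta>\underline\alpha$), any point of $I$ is mapped into $U$ within $\mathbf J$ steps. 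I would prove this by contradiction and compactness: if no such $\mathbf J$ existed, a diagonal extraction over the initial points in the compact set $I\setminus U$ and over the controls in $[\underline\beta',\bar\beta]$, using continuity of $G$, would produce a single orbit with controls in $[\underline\beta',\bar\beta]\subset(\underline\alpha,1)$ that never enters $U$. Since $K_1,K_2,K_3\in U$ while $K_0\notin I$, such an orbit could not converge to any equilibrium, contradicting Theorem~\ref{thm:detcd}, which applies precisely because these controls exceed $\underline\alpha$. Hence $\mathbf J$ and $\varepsilon$ exist and depend only on $\underline\alpha$, $\bar\beta$, $I$, $U$, not on the realization; note also $\underline\beta'>\underline\beta$, so $I$ remains invariant throughout the burst.

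Finally I would assemble the probabilistic conclusion. Applying Lemma~\ref{lem:topor} with this $\mathbf J$, this $\varepsilon$, and the random moment $\mathcal M$ from the confinement step, there is a.s. a random time $\mathcal N\ge\mathcal M$ with $\xi_{\mathcal N+i}\ge 1-\varepsilon$, hence $\alpha_{\mathcal N+i}=\alpha+\ell\xi_{\mathcal N+i}\in[\underline\beta',\bar\beta]$, for $i=0,\dots,\mathbf J$. Since $x_{\mathcal N}\in I$, the driving estimate places $x_{\mathcal N+\mathbf J+1}\in U$. From that moment all controls still satisfy $\alpha_n>\underline\alpha_0$, so Lemma~\ref{lem:delta01} makes $U$ invariant and forces convergence to $K_1$ (if the entry point lies in $(K_1-\delta,K_2)$) or to $K_3$ (if it lies in $(K_2,K_3+\delta)$), and to the equilibrium $K_2$ on the negligible branch where the iterate lands exactly at $K_2$. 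As the underlying event has probability one by Lemma~\ref{lem:topor}, the solution converges to an equilibrium with total probability one, which is the assertion of Theorem~\ref{thm:stoch2sides}.
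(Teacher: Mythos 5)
Your overall architecture matches the paper's (pathwise confinement, a Borel--Cantelli burst of high controls driving the orbit into the trap $(K_1-\delta,K_3+\delta)$, then Lemma~\ref{lem:delta01}), but the confinement step contains a genuine gap. You assert that, using only $\alpha_n\ge\underline\beta:=\alpha-\ell>\underline\alpha_0$, every orbit enters $I=[\hat d(\underline\beta),\hat c(\underline\beta)]$ in finite time, citing Lemmas~\ref{lem:alpha01} and~\ref{lem:x0beta0}. Neither lemma delivers this: Lemma~\ref{lem:alpha01} is predicated on \eqref{cond:or1}, which is exactly the condition assumed to fail once \eqref{cond:1} is imposed, and Lemma~\ref{lem:x0beta0} only treats initial data already inside $(\hat d(\alpha_*),\hat c(\alpha_*))$. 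Worse, the assertion is false in general: since $\underline\beta$ may lie in $(\underline\alpha_0,\underline\alpha)$, Lemma~\ref{lem:k0}(ii) permits $k_0(\underline\beta)=\infty$, in which case $\{\hat d(\underline\beta),\hat c(\underline\beta)\}$ is a two-cycle of $G(\underline\beta,\cdot)$ and orbits started outside $I$ can circulate indefinitely without ever entering it. Example~\ref{ex:infder} shows this concretely: $G(0.58,\cdot)$ has a second two-cycle $\{0.9,\,1.5\}$ lying entirely outside $[\hat d(0.58),\hat c(0.58)]\subseteq[0.9749,\,1.2755]$. Driving the orbit into a neighbourhood of $(\hat d,\hat c)$ is precisely what the stochastic burst is needed for, so it cannot be granted at the confinement stage.

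The gap is repairable because your key ``driving'' estimate uses only compactness and forward invariance of the confining set, not its optimality: replace $I$ by $[g_m^2,g_m]$ (the paper's own implicit reduction to $x_0\in(g_m^2,g_m)$), which is forward invariant for every control in $(\underline\alpha_0,1)$ --- by Lemma~\ref{lem:delta01}(i) on $[K_1-\delta,K_3+\delta]$ and because $G(\beta,x)$ lies between $x$ and $g(x)$ elsewhere --- and is reached in finitely many steps from any $x_0$ since $g-x$ is bounded away from zero on compact subsets of the outer intervals. With that change, your compactness/diagonal-extraction argument on $[g_m^2,g_m]\setminus U$, contradicting Theorem~\ref{thm:detcd} for a limit orbit with controls in $[\underline\beta',\bar\beta]\subset(\underline\alpha,1)$, is a valid but non-constructive substitute for the paper's explicit count $N=2k_0(\alpha_*)(N_1+N_2)$, where $k_0(\alpha_*)<\infty$ bounds the number of circulations between the two outer blocks (Lemma~\ref{lem:propseqcd}) and $N_1,N_2$ are explicit traversal times. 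Two smaller points to settle: the contradiction needs $K_1,K_3$ to lie in the \emph{interior} of $U$, which fails for $\delta=0$ (a limit orbit may then converge to $K_1$ from the left without entering $U=(K_1,K_3)$), so that case needs separate treatment; and the limit orbit must be prevented from converging to $K_2$, which holds because $K_2\in U$ and $K_2$ is repelling for non-constant orbits.
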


\section{An arbitrary number of equilibrium points}
\label{sec:arbeq}
In this section we consider a continuous function $g$ which might have more than four equilibrium points, see the fourth iterate of Ricker's map in Fig.~\ref{fig:Rick4} as an example. An approach similar to previous sections is applicable, however it becomes quite technical, therefore we reduce considerations to the analogue  of Assumption~\ref{as:4eq1} when  $\delta=0$. The general case we leave for the future research.

\begin{figure}[ht]
\centering
\includegraphics[height=.24\textheight]{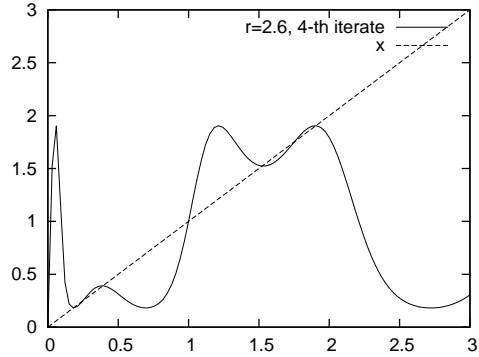}
\caption{The fourth iteration of the Ricker map for $r=2.6$.}
\label{fig:Rick4}
\end{figure}

We aim to stabilize every second point only, so we define the number of these points as
\begin{equation}
\label{def:i0}
i_0:=\left[\frac{ j_0}2-1\right], \quad I:=\left\{0, 1, \dots, i_0  \right\},
\end{equation}
where $\left[t \right] $ is the largest integer not exceeding $t$.
\begin{assumption}
\label{as:LL1}
Let $g:\mathbb R\to \mathbb R$ be a continuous function such that
\begin{equation}
\label{eq:equil1}
\begin{split}
&g(K_j)=K_j\in \mathbb K:=\{K_j, \,  j=0, 1,2, \dots, j_0\}, \,\, K_{j_0}\le \infty,\\
& g(x)-x\ge 0, \, \mbox{for} \,\,  x\in (K_0,K_1),
\\ & g(x)-x \mbox{ alternates its sign on adjacent } \,  (K_j,K_{j+1}), \\
&\, j=0, 1,2, \dots, j_0-1, \,\, \mbox{$g(x)-x$ can vanish for some $x\in (K_j,K_{j+1})$},\\
&x\ge g(x)\ge b, \,\, \mbox{for some} \,\, b\in \mathbb R, \,\, \mbox{when}\,\, x\in (K_{j_0-1}, \infty), \\ & \mbox{if} \,\, K_{j_0}=\infty, \, j_0 \, \mbox{is even}.
\end{split}
\end{equation}
\end{assumption}
The last two lines in \eqref{eq:equil1} correspond to higher death rates for overpopulation, which is satisfied for
any iterates of the Ricker, logistic and other population dynamics maps. In the chaotic case, these maps are characterized
by much higher global Lipschitz constants on one of two adjacent segments with a common potentially stabilizable point,
which corresponds to a one-side Lipschitz condition 
at each such point in Assumption~\ref{as:LL1},
\begin{equation}
\label{cond:Lrl}
\begin{split}
&K_{2i+1}-g(x)\leq  L^+_{2i+1} (x-K_{2i+1}), ~ x\in (K_{2i+1}, K_{2i+2}),~ i\in I,\\
&g(x)-K_{2i+1} \leq  L^-_{2i+1} (K_{2i+1}-x), ~  x\in (K_{2i}, K_{2i+1}),~ i=0, 1, \dots, \left[\frac{ j_0-1}2\right].
\end{split}
\end{equation}
An efficient control is possible, only if at least one of one-side Lipschitz constants at each point $K_{2i+1}$ is finite.

\begin{assumption}
\label{as:LLj0}
Let $g$ from Assumption~\ref{as:LL1} satisfy that either $j_0$ is even,
or $g$ is left locally Lipschitz at $K_{j_0}<\infty$, and
$\min \left\{L^-_{2i+1} , \,  L^+_{2i+1} \right\}<\infty$,  $i \in I$, where $L^+_{2i+1}, L^-_{2i+1} \in (0, \infty]$ are defined in \eqref{cond:Lrl}.
\end{assumption}

Under Assumptions~\ref{as:LL1} and \ref{as:LLj0}, for $K_{j_0} < \infty$,  either $j_0$ is even, or $g$ is left locally Lipschitz at $K_{j_0}$.
In the former case, $g(x)<x$ for $x \in (K_{j_0-1}, K_{j_0})$, and, due to continuity of $g$, the function
$(g(x)-K_{j_0})/(K_{j_0}-x)$ is continuous on $[K_0,K_{j_0-1}]$ and
$$
L^-_{j_0} := \max_{x \in [K_0,K_{j_0}]} \frac{g(x)-K_{j_0}}{K_{j_0}-x}=\max_{x \in [K_0,K_{j_0-1}]} \frac{g(x)-K_{j_0}}{K_{j_0}-x} < \infty.
$$
In the latter case, if $L^*$ is a local Lipschitz left constant, $g(x)-K_{j_0} \leq L^* (K_{j_0}-x)$, $x\in (K_{j_0}-\varepsilon,
K_{j_0})$, we again use continuity of $(g(x)-K_{j_0})/(K_{j_0}-x)$ on $[K_0,K_{j_0}-\varepsilon]$ to get
$$
L^-_{j_0} := \max _{x \in [K_0,K_{j_0}]} \frac{g(x)-K_{j_0}}{K_{j_0}-x}=\max \left\{ L^*,
\max_{x \in [K_0,K_{j_0}-\varepsilon]} \frac{g(x)-K_{j_0}}{K_{j_0}-x} \right\}< \infty.
$$
For $K_0$, $g(x)>x\geq K_0$, $x\in [K_0,K_1]$, the function $(g(x)-K_0)/(x-K_0)$ is continuous on $[K_1,K_{j_0}]$, therefore
$$ L^+_{0} := -\min_{x \in [K_0,K_{j_0}]} \frac{g(x)-K_0}{x-K_0}=- \min_{x \in [K_1,K_{j_0}]} \frac{g(x)-K_0}{x-K_0} < \infty.$$

Summarizing, under Assumptions~\ref{as:LL1} and \ref{as:LLj0}, when $K_{j_0}<\infty$,  there exist finite positive constants $L^-_{j_0}$ and $L^+_{0}$ such that
\begin{equation}
\label{cond:-Kj0}
\begin{split}
&g(x)-K_{j_0} \leq  L^-_{j_0} (K_{j_0}-x), \quad  x\in (K_0, K_{j_0}), \quad  L^-_{j_0}<\infty,\\
&g(x)-K_{0} \geq - L^+_{0} (x-K_0), \quad  x\in (K_0, K_{j_0}), \quad  L^+_{0}<\infty.
\end{split}
\end{equation}

\begin{remark}
Once we are free to choose $j_0$ in the control scheme, the condition concerning the value of $j_0$ in Assumption~\ref{as:LLj0} is
not really a limitation, assuming local one-side Lipschitz condition at each $K_{2i+1}$ is sufficient.
Whenever $g:(K_0, \, K_{j_0})\to (K_0, \, K_{j_0})$, both inequalities in \eqref{cond:-Kj0} hold
with arbitrarily small constants $L^-_{j_0}$ and $L^-_{0}$.
\end{remark}

Denote
\begin{equation}
\label{def:L2i+1}
L_{2i+1}:=\min\left\{  L^-_{2i+1}, \,  L^+_{2i+1} \right\},    ~i\in I, \quad \bar L:=\max\left\{ L^+_0, \, L^-_{j_0}, \, L_{2i+1}, \,i\in I\right\},
\end{equation}
where the notations in \eqref{cond:Lrl} and \eqref{cond:-Kj0} are used. Without loss of generality, we assume that $L>1$.
Now, introduce the sets
\begin{equation}
\label{def:I12}
\begin{split}
I^+:=\left\{i\in I: L_{2i+1}=L^+_{2i+1}\right\},\quad I^-:=\left\{i\in I: L_{2i+1}=L^-_{2i+1} < L^+_{2i+1} \right\},
\end{split}
\end{equation}
i.e. for $i\in I^+$ we have $L^+_{2i+1}\le L^-_{2i+1}$, and $L^+_{2i+1} > L^-_{2i+1}$ 
for  $I^-$. Also, $I^+\cup I^-=I$ 
and $I^+\cap I^-=\emptyset$.

Similarly to Lemma~\ref{lem:LL+2},  we can justify the following result.

\begin{lemma}
\label{lem:comb}
Let  $g$ from Assumption \ref{as:LL1} satisfy conditions \eqref{cond:Lrl} and \eqref{cond:-Kj0}, $G$ be defined as in \eqref{def:G}.
\begin{enumerate}
\item  [(i)] For $a:=\max\left\{L^-_{j_0}/(1+L^-_{j_0}), \, \,  L^+_{0}/(1+L^+_{0})\right\}$, $\alpha \in (a,1]$,  we have $G(\alpha, \cdot): (K_0, K_{j_0})\to (K_0, K_{j_0}).$

\item [(ii)] Let $\alpha\in\bigl(\bar L/(\bar L+1), 1\bigr)$, and $x_n$ be a solution to \eqref{eq:PBCvar} with $\alpha_n\in  (\bar L/(\bar L+1), 1)$.
\begin{enumerate}
\item   If $x_0\in (K_{2i}, K_{2i+1})$, $i\in I^-$ then
$G(\alpha, x_0)\in  (K_{2i}, K_{2i+1})$, and $x_n$ is non-decreasing and converges  
either to $K_{2i+1}$ or to some equilibrium in  $(K_{2i}, K_{2i+1})$.

\item   If $x_0\in (K_{2i+1}, K_{2i+2})$, $i\in I^+$ then
$G(\alpha, x_0)\in  (K_{2i+1}, K_{2i+2})$, and $x_n$ is non-increasing and converges either to $K_{2i+1}$ or to some equilibrium in  $(K_{2i}, K_{2i+1})$.

\item  If $i\in I^+$ and $i+1\in I^-$, then
$G(\alpha, \cdot): [K_{2i+1}, K_{2i+3}]\to [K_{2i+1}, K_{2i+3}]$.
Moreover, if  $x_0\in (K_{2i}, K_{2i+2})$ then $x_n$ converges to either $K_{2i+1}$, or $K_{2i+3}$, or an equilibrium in $(K_{2i+1}, K_{2i+2})\cup (K_{2i+2}, K_{2i+3})$ and is monotone.
 \end{enumerate}
\end{enumerate}
\end{lemma}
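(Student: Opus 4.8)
The plan is to prove each part by combining the defining inequality for $G$ in \eqref{def:G} with the one-side Lipschitz conditions, mimicking the argument already carried out in Lemma~\ref{lem:LL+2} but now keeping track of which one-side constant is the smaller one at each $K_{2i+1}$.

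\medskip

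For part (i), I would start from the outermost constants $L^-_{j_0}$ and $L^+_0$ of \eqref{cond:-Kj0}. To show $G(\alpha,x)<K_{j_0}$ for $x\in(K_0,K_{j_0})$, write $K_{j_0}-G(\alpha,x)=(1-\alpha)\bigl(K_{j_0}-g(x)\bigr)+\alpha\bigl(K_{j_0}-x\bigr)$ and bound $K_{j_0}-g(x)\ge -L^-_{j_0}(K_{j_0}-x)$ from the first line of \eqref{cond:-Kj0}; this yields $K_{j_0}-G(\alpha,x)\ge\bigl[\alpha-(1-\alpha)L^-_{j_0}\bigr](K_{j_0}-x)$, which is positive precisely when $\alpha>L^-_{j_0}/(1+L^-_{j_0})$. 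The lower endpoint $G(\alpha,x)>K_0$ is symmetric, using the second line of \eqref{cond:-Kj0}; taking the maximum of the two thresholds gives the claimed $a$.

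\medskip

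For part (ii)(a) and (ii)(b), the argument is a localized version of the same computation, but now one must check both that $G(\alpha,\cdot)$ keeps the point inside its subinterval and that the iteration is monotone. For $i\in I^-$, $x_0\in(K_{2i},K_{2i+1})$, here $g(x)-x>0$ so $g(x)>G(\alpha,x)>x$ by Lemma~\ref{lem:Gv}(iii), giving monotone increase; to see $G(\alpha,x)<K_{2i+1}$ I would use $K_{2i+1}-g(x)\ge -L^-_{2i+1}(K_{2i+1}-x)$ (the finite side, since $i\in I^-$) exactly as above, with $\alpha>\bar L/(\bar L+1)\ge L^-_{2i+1}/(1+L^-_{2i+1})$. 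The resulting sequence is increasing and bounded above by $K_{2i+1}$, so it converges; Lemma~\ref{lem:conv} forces the limit to be a fixed point, hence $K_{2i+1}$ or an interior equilibrium. Part (ii)(b) is the mirror image using $L^+_{2i+1}$ for $i\in I^+$.

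\medskip

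Part (ii)(c) is the part that matters and where the real work lies, since it is the analogue of the trap in Lemma~\ref{lem:LL+2}: the invariant set straddles the critical point $K_{2i+2}$ where the sign of $g(x)-x$ flips. The key observation is that the left endpoint $K_{2i+1}$ (with $i\in I^+$, so its finite constant is the right one $L^+_{2i+1}$) and the right endpoint $K_{2i+3}$ (with $i+1\in I^-$, so its finite constant is the left one $L^-_{2i+3}$) are exactly the two points whose \emph{interior-facing} one-side Lipschitz constants are controlled. I would show $G(\alpha,\cdot)$ maps $[K_{2i+1},K_{2i+3}]$ into itself by verifying $G(\alpha,x)\ge K_{2i+1}$ using $L^+_{2i+1}$ on $(K_{2i+1},K_{2i+2})$ and $G(\alpha,x)\le K_{2i+3}$ using $L^-_{2i+3}$ on $(K_{2i+2},K_{2i+3})$, with the sign-alternation of $g(x)-x$ handled as in Lemma~\ref{lem:LL+2}(i). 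Once invariance is established, parts (ii)(a)–(b) applied to the first iterate show $G(\alpha,x_0)$ lands in the trap for $x_0\in(K_{2i},K_{2i+2})$, and the bounded monotone convergence argument identifies the limit among $K_{2i+1}$, $K_{2i+3}$, or an interior equilibrium. The main obstacle I anticipate is carefully matching the correct one-side constant to the correct half-interval at the shared endpoints and verifying that $\bar L/(\bar L+1)$ dominates every individual threshold $L^{\pm}_{2i+1}/(1+L^{\pm}_{2i+1})$ simultaneously, so that a single lower bound on $\alpha$ suffices across all blocks.
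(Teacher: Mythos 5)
The paper prints no separate proof of this lemma --- it only says the result is justified ``similarly to Lemma~\ref{lem:LL+2}'' --- and your computations for part (i), parts (ii)(a)--(b), and the invariance claim in (ii)(c) are exactly that argument: you correctly match the finite, interior-facing one-side constant to each half-interval and use that $t\mapsto t/(1+t)$ is increasing, so $\bar L/(\bar L+1)$ dominates each $L_{2i+1}/(1+L_{2i+1})$. Two small points: since $g(x)-x$ is allowed to vanish inside $(K_j,K_{j+1})$, the orbit is only non-decreasing (resp.\ non-increasing), not strictly monotone, so Lemma~\ref{lem:Gv}(iii) should be invoked with that caveat; and the limit in (ii)(b) lies in $\{K_{2i+1}\}\cup(K_{2i+1},K_{2i+2})$, as your ``mirror image'' argument actually produces (the interval named in the printed statement is evidently a typo).

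The genuine gap is in your last step for (ii)(c): you claim that parts (ii)(a)--(b) applied to the first iterate put $G(\alpha,x_0)$ into the trap $[K_{2i+1},K_{2i+3}]$ for every $x_0\in(K_{2i},K_{2i+2})$. For $x_0\in(K_{2i},K_{2i+1})$ this fails: there $i\in I^+$, so the controlled constant at $K_{2i+1}$ is the right-facing one, the left-facing constant $L^-_{2i+1}$ may be very large or infinite, and on $(K_{2i},K_{2i+1})$ one only knows $G(\alpha,x)\ge x$ with no upper bound short of the global one from \eqref{cond:-Kj0}; the first iterate can overshoot far beyond $K_{2i+3}$. This is precisely the circulation phenomenon the rest of Section~\ref{sec:arbeq} and the Appendix are built to control, and the paper says so explicitly in the paragraph following the lemma (``when $x_0\in (K_{2i}, K_{2i+1})$, $i\in I^+$, \dots\ the solution can get out of the interval, start traveling from one interval to another''). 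Moreover, for such $x_0$ the orbit might instead remain in $(K_{2i},K_{2i+1})$ and converge to an equilibrium there, which is not among the limits listed in (ii)(c). The reading consistent with Lemma~\ref{lem:LL+2}(iii) is that the ``Moreover'' clause concerns $x_0\in(K_{2i+1},K_{2i+2})\cup(K_{2i+2},K_{2i+3})$, in which case it is an immediate consequence of (ii)(a)--(b) on the two half-intervals, each of which is itself invariant, and no ``first iterate lands in the trap'' step is needed. You should either restrict the clause accordingly or supply the bound you are implicitly assuming, which is not available from the hypotheses.
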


Everywhere in this paper we assume  $\alpha_n>\bar L/(\bar L+1)$, which makes it possible to apply Lemma~\ref{lem:comb}.  Definition of sets in~\eqref{def:I12} allows us to distinguish between $K_{2j+1}$ depending on the side where the Lipschitz constant  is smaller, i.e. whether $L_{2i+1}= L^+_{2i+1}$ or $L_{2i+1}= L^-_{2i+1}$. Similarly to the four-equilibrium case, we can establish stability of equilibria  $K_{2j+1}\in \mathbb K$ with odd indexes only. Lemma~\ref {lem:comb} describes the behavior of a solution when $x_0\in  (K_{2i}, K_{2i+1})$, $i\in I^-$,  and $x_0\in  (K_{2i+1}, K_{2i+2})$, $i\in I^+$. However, when $x_0\in  (K_{2i}, K_{2i+1})$, $i\in I^+$,  and $x_0\in  (K_{2i+1}, K_{2i+2})$, $i\in I^-$, the situation is different: the solution can get out of the interval, start traveling from one interval to another including infinite circulation between some  intervals.

The general idea contains packing the intervals surrounding a potentially stabilizable equilibrium
into blocks, based on whether the left or the right Lipschitz constant is bigger. At the very beginning, we apply a control which makes open segments with a smaller constant monotone attractors.
Thus, if we have a group $\mathcal V_{0}$ of consecutive segments with prevailing right constants, a solution with an initial point in this group can only be switching to the segments to the left, and therefore converges to one of the equilibrium points in $\mathcal V_{0}$. Similarly,
for a group $\tilde {\mathcal V}$ of consecutive segments with prevailing left constants, any solution with an initial point in this group can only be switching to the segments to the right, and therefore converges to one of the equilibrium points in $\tilde{\mathcal V}$.
Our main analysis is devoted to the block $ \mathcal V_{2s+1}$, $s=0, \dots, s_0-1$,
consisting of two back-to-back groups of both types,  between which  circulation is possible.  We introduce two-step control process: first keeping all solutions inside a designated block, and then eliminating circulation between two groups inside each block. For brief  illustration of intervals we refer to Fig.~\ref{fig:Rick4}, where we have $L_1^->L_1^+$, $L_3^-<L_3^+$, $L_5^->L_5^+$, $L_7^-\!<\!L_7^+$, $\tilde{\mathcal V}=\mathcal V_0=\emptyset$, $\mathcal V_1=\{(K_0, K_1), \, (K_1, K_2),\, (K_2, K_3),\,(K_3, K_4)\}$, $\mathcal V_3=\{(K_4, K_5), \, (K_5, K_6),\, (K_6, K_7),\,(K_7, \infty)\}$.

The next theorem is  the main result for the deterministic control on all $(K_0, K_{j_0})$,
when only one-side Lipschitz constants at odd-numbered equilibrium points are supposed to be finite. Most details on classification of the intervals along with the proof of Theorem~\ref{thm:det} are deferred to the Appendix.
Note that in Theorem~\ref{thm:det} constant solutions with $x_0=K_{2j}$ 
are excluded from consideration, see the remark before Lemma~\ref{lem:x0beta0}.

\begin{theorem}
\label{thm:det}
Let Assumptions \ref{as:LL1} and \ref{as:LLj0}   
hold, $I$  be defined as in \eqref{def:i0}. Then there exists a control  $ \alpha_*\in \bigl(\bar L/(\bar L+1), 1\bigr)$ such that for each  $\alpha^*\in (\alpha_*, 1)$, if $\alpha_n\in [\alpha_*, \alpha^*]$ for all $n\in \mathbb N$, a solution  $x$ to \eqref{eq:PBCvar} with $x_0\in (K_0, K_{j_0})$  converges  either to some $K_{2i+1}$, $i\in I$,  or to any equilibrium inside  $(K_p, K_{p+1})$, $0\le p\le j_0-1$.
\end{theorem}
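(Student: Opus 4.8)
The plan is to reduce the analysis to the four-equilibrium model of Section~\ref{sec:4eq}, carried out on each of the finitely many blocks into which $(K_0,K_{j_0})$ is split, using a two-stage control that is realized by a single sufficiently large threshold $\alpha_*$. The first stage only requires $\alpha_n>\bar L/(\bar L+1)$ with $\bar L$ as in \eqref{def:L2i+1}: by Lemma~\ref{lem:comb}~(i) this makes $(K_0,K_{j_0})$ invariant, so no solution escapes, and by parts (ii)(a),(b) it turns every smooth segment — namely $(K_{2i},K_{2i+1})$ when $i\in I^-$ and $(K_{2i+1},K_{2i+2})$ when $i\in I^+$ — into a monotone attractor of $K_{2i+1}$. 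Wherever a solution remains on such a segment it is monotone, and by Lemma~\ref{lem:conv} any limit is a fixed point, which already gives convergence to an equilibrium. The steep segments are the complementary ones, and it is only there that overshoot, and hence possible circulation, can arise.

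Next I would make precise the block decomposition sketched before the theorem, using the partition $I=I^+\cup I^-$ of \eqref{def:I12}: by the definition, $i\in I^+$ means the left constant at $K_{2i+1}$ is the larger one (steep left), and $i\in I^-$ means the right constant is larger (steep right). A maximal run of indices in $I^+$ forms a group that drives solutions to the right, a maximal run in $I^-$ a group that drives them to the left, and each such pure group yields convergence to one of its odd-indexed equilibria. Genuine back-and-forth motion is possible only across a run of $I^+$ indices immediately followed by a run of $I^-$ indices — this is a block $\mathcal V_{2s+1}$, locally the configuration of Lemma~\ref{lem:comb}~(ii)(c). A point I would verify from the alternating sign of $g(x)-x$ together with the convex-combination form $G(\alpha,x)=(1-\alpha)g(x)+\alpha x$ of \eqref{def:G} is that the two outer steep edges of each block jump strictly inward: on an even-to-odd edge $g>x$ forces $G>x$, and on an odd-to-even edge $g<x$ forces $G<x$. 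Together with a strong enough control this confines every non-convergent solution to a single block, the complementary $I^-$-to-$I^+$ junctions acting as barriers; establishing this confinement rigorously is the content of the first control stage, ``keeping all solutions inside a designated block''.

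Within a fixed block I would then reproduce the construction of Section~\ref{sec:4eq}, taking $\delta=0$ as assumed here. The last $I^+$ point and the first $I^-$ point of the block play the role of $K_1$ and $K_3$, the enclosed even point that of $K_2$, and the two outer steep edges that of $(K_0,K_1)$ and $(K_3,g_m)$; for the extreme blocks adjacent to $K_0$ and $K_{j_0}$ the boundary estimates \eqref{cond:-Kj0} bound the outermost segments. For the block I would build the monotone sequences $(c_n),(d_n)$ and the thresholds $\underline\alpha_0,\underline\alpha$ exactly as in \eqref{def:alpha0} and \eqref{def:underalphacd}, use the block analogue of Lemma~\ref{lem:propseqcd} to produce an invariant interval and to exclude a two-cycle once $\alpha$ exceeds the threshold, and then apply the block analogues of Lemma~\ref{lem:x0beta0} and Theorem~\ref{thm:detcd} to get convergence to one of the bounding odd points or to an equilibrium inside one of the block's intervals. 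Setting $\alpha_*$ to be the maximum over the finitely many blocks of these thresholds, each exceeding $\bar L/(\bar L+1)$, produces a single control for which the conclusion holds simultaneously on all of $(K_0,K_{j_0})$, for every $\alpha_n\in[\alpha_*,\alpha^*]$.

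The main obstacle I anticipate is the block-internal analysis rather than the barrier confinement: a block generally contains runs of several $I^+$ and several $I^-$ points, hence several steep segments, so over-jumping can originate from more than one pair of intervals, and the two-point construction of Section~\ref{sec:4eq} must be upgraded to this multi-interval setting while still producing one sharp threshold that excludes \emph{every} circulating cycle inside the block. Carrying the sequences $(c_n),(d_n)$ and the bounds $\underline\alpha_0,\underline\alpha$ through this generalization, together with the combinatorial bookkeeping that assigns each index to its group and block and treats the two end blocks via \eqref{cond:-Kj0}, is where the technical weight lies. Once it is in place, the per-block convergence is a direct transcription of the four-equilibrium results, and the global statement follows by taking the maximal threshold.
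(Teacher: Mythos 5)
Your proposal follows essentially the same route as the paper: the decomposition of $(K_0,K_{j_0})$ into the blocks $\mathcal V_0$, $\mathcal V_{2s+1}$, $\tilde{\mathcal V}$ of \eqref{def:mathcalV}, a first control stage that stops communication between blocks, a per-block reduction to the four-equilibrium machinery of Section~\ref{sec:4eq} (this is exactly Lemma~\ref{lem:detmr} with the thresholds \eqref{def:beta0mr} and \eqref{def:beta1mr}), and finally taking $\alpha_*$ as the maximum of the finitely many block thresholds. The multi-interval block-internal analysis you flag as the main technical burden is precisely what the paper defers to Section~\ref{subsec:arbap} of the Appendix.
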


\begin{remark}
\label{rem:inf}
The results of Theorem~\ref{thm:det} can be extended to an infinite number of equilibrium points, $\mathbb K=\{K_i, \, i\in \mathbb N \}$ with $\lim_{i\to \infty} K_i=\infty$, if we consider a finite subinterval.
For each $p\in \mathbb N$ and $x_0\in (K_0, K_p)$, Theorem~\ref{thm:det} holds with some $\alpha=\alpha(p)$.
\end{remark}
In some situations, when a control is perturbed by  an additive noise, $\alpha+\ell \xi_{n}$, Theorem~\ref{thm:det} can be improved by decreasing the lower bound on $\alpha$. However, in the case of an arbitrary number of equilibrium points, even the statement  of all necessary conditions is quite involved, so we defer most of this part to the Appendix.


\section{Examples and simulations}
\label{sec:ex}

In this section we consider three examples illustrating our results. In particular, simulations demonstrate that
introduction of noise into a deterministic control sometimes extends the range of $\alpha$ which
guarantees stability.

\begin{example}
\label{ex:Ricker2}
Consider the second iteration of the Ricker map $g=f^2$, $r=2.7$, as illustrated by Fig.~\ref{fig:Ricker2}.
The function $g=f^2$  is infinitely differentiable and has 3 positive fixed points $K_1\approx 0.214$, $K_2=1$,  $K_3 \approx 1.786$. The map $g=f^2$ has a minimum at the point $\frac{1}{r} \approx 0.37\in (K_1, K_2)$   
 with the value $g_{\min}\approx 0.142$, and monotone derivatives on each intervals, $(K_1, K_2)$ and  $(K_2, K_3)$, and it has two maximums with the same value $g_{\max}=\frac{e^{r-1}}{r}\approx 2.027$. We estimate Lipschitz constant $L$ on these intervals as $L\approx 1.65$. The left Lipschitz constant on $(K_0, K_1)=(0, 0.214) $ is quite large, it exceeds 9.8, leading to $(L-1)/(L+1)>0.8$. Thus application of results of \cite[Theorem 2.5]{LizPBC2012}  gives us a lower estimate for $\alpha_n$ exceeding 0.8, taking into account that the right Lipschitz constant only gives $(L-1)/(L+1) \approx 0.2452$.

We show that  $\beta\approx 0.249$ belongs to the right-hand side of \eqref{def:alpha0} for some $\delta>0$.  The corresponding function $G(\beta, \cdot)$ takes its minimum on $(K_1, K_2)$ at $x_{\min~\beta}$,
which can be found from  the equation $g'(x_{\min\beta})=-\beta/(1-\beta)$.   Approximating $g'(x)$ numerically and taking into account
$g'(x_{\min\beta}) \approx -0.331$,  we get $x_{\min~0.249}\approx 0.3387$.

For $x\in (K_1, K_2)$ and $\beta=0.249$, we estimate
\begin{align*}
&G(\beta, x)\ge  (1-\beta)g_{\min}+\beta x_{\min \, \beta}>-\delta+K_1, \\
& \delta> K_1-(1-\beta)g_{\min}-\beta x_{\min \, \beta}
 \approx 0.022,
\end{align*}
which is consistent with the graph on Fig.~\ref{fig:Ricker2}.
So the inequality on the second line of \eqref{def:alpha0} holds. Since $|G'(\beta, x)|$ is
decreasing  for $x>K_3$, we do not need to take care about the inequality on the third line of \eqref{def:alpha0}.

For $\beta=0.249$ we have $\max_{x\in (0, K_1)}G(\beta, x)\le (1-\beta)\max_{x\in (0, K_1)} g(x)+\beta K_1 \le 1.786=K_3<K_3+\delta$,
so, by \eqref{def:dc4eq}, \eqref{def:0k0},   we have $d_1(\beta)=d_0(\beta)=\hat d(\beta)=K_1-\delta$, $k_0(\beta)=1$, and, by \eqref{def:underalphacd}  we have $\beta>\underline \alpha$.
Thus, Theorem~\ref{thm:detcd} implies attractivity of $K_1$ and $K_3$.

A bifurcation diagram on Fig.~\ref{bif_Ric_squared} (left) confirms the result.
\begin{figure}[ht]
\centering
\includegraphics[height=.18\textheight]{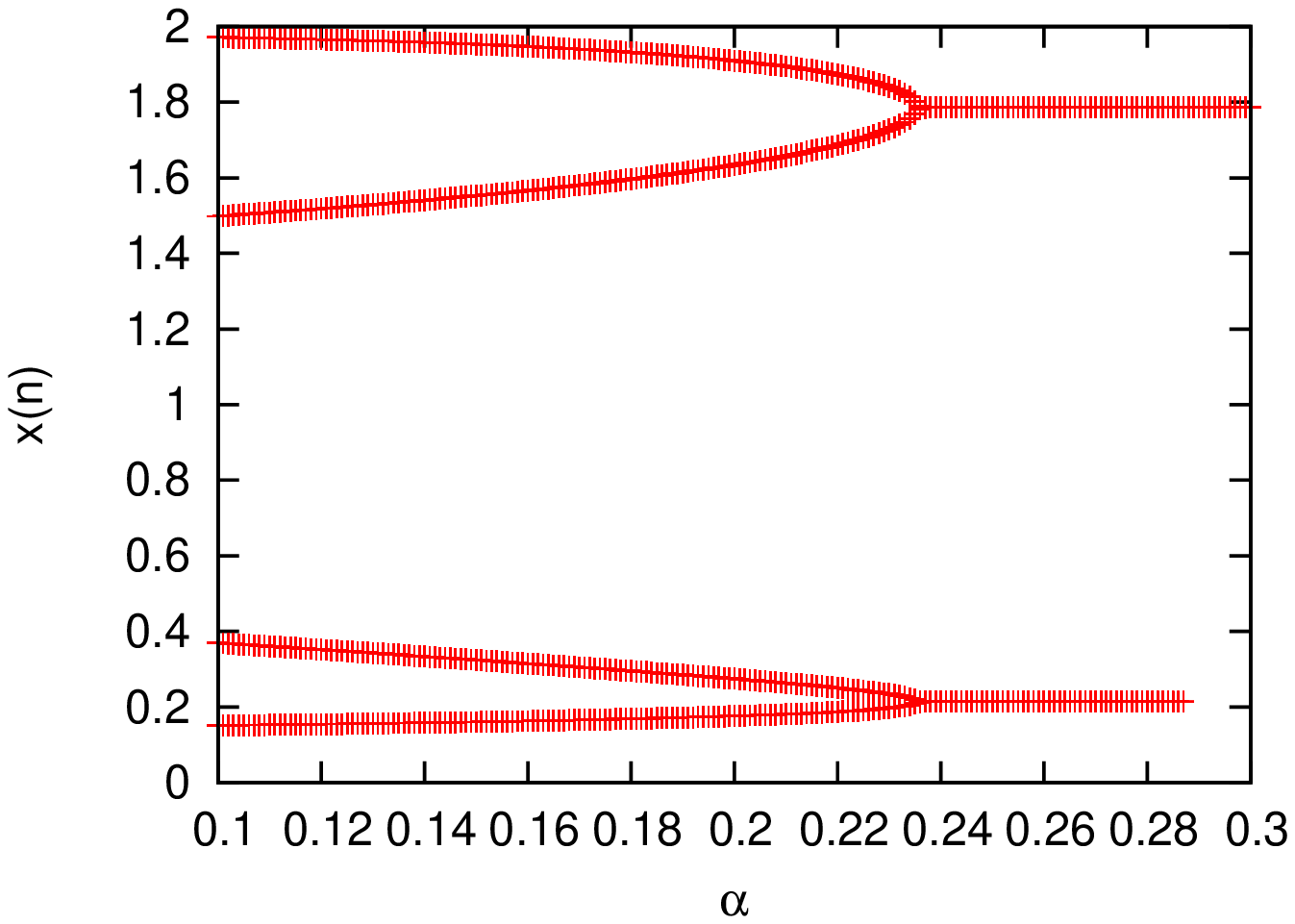}
~~
\includegraphics[height=.18\textheight]{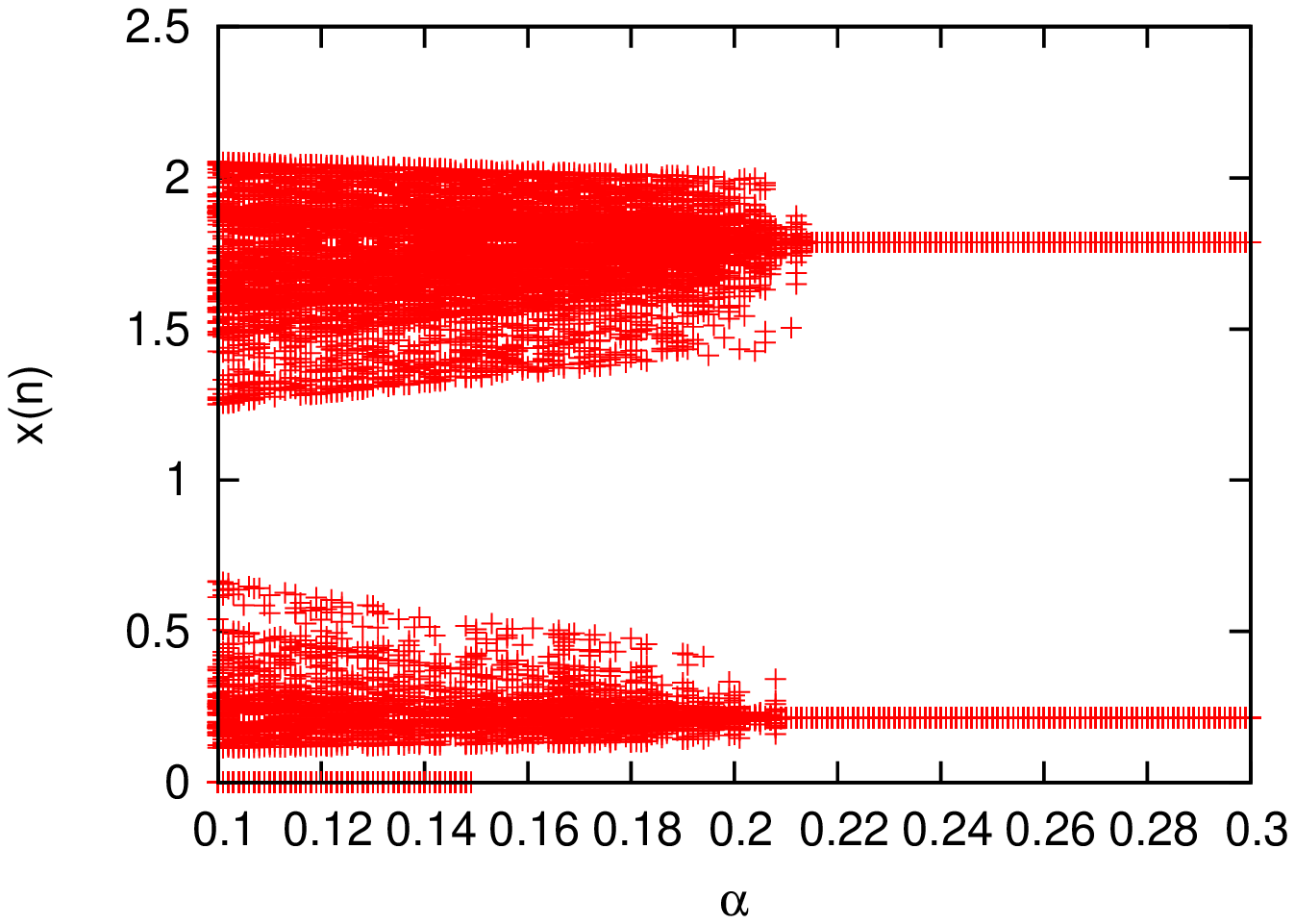}
\caption{A bifurcation diagram for the second iterate of the Ricker  map  with $r=2.7$, $\alpha \in
(0.1,0.3)$ and (left) no noise, (right) $\ell=0.15$.
}
\label{bif_Ric_squared}
\end{figure}

When a control is stochastically perturbed  by the noise with the amplitude $\ell=0.15$,
the bifurcation diagram in Fig.~\ref{bif_Ric_squared} (right) demonstrates that stabilization is achieved at
$\alpha\approx 0.215$,
which is smaller than for the deterministic case $\alpha\approx 0.235$, see Fig.~\ref{bif_Ric_squared} (left), and is aligned with the result of Theorem~\ref{thm:stoch2sides}.
\end{example}

\begin{example}
\label{ex:Ricker3}
Consider the third iteration of the Ricker map $g=f^3$, $r=3.5$,
which has 7 equilibrium points, $K_i$,  $K_8:=\infty$,
$|g'(K_{2i+1})|\le 15.62=L$, $i=0, 1, \dots, 7$, and the right Lipschitz constants $L^+_{2i+1}$ on  the intervals $(K_{2i+1}, K_{2i+2})$, $i=0, \dots 3$
do not exceed $L \approx 15.62$. However, some of left Lipschitz constants $L^-_{2i+1}$ on  the intervals $(K_{2i}, K_{2i+1})$ are greater than 300.

Applying  the comments from Section~\ref{sec:arbeq} and classification from Section~\ref{subsubsec:classif}, we conclude that all the
intervals
$(K_p, K_{p+1})$, $p=1,2, \dots,7$ belong to the block  \,$\tilde {\mathcal V}$.
So for $\alpha_n\ge 0.94$, the circulation of solution between intervals is impossible, and all the equilibria $K_{2i+1}$, $i=0, \dots, 3$
are stable.
The bifurcation diagram in Fig.~\ref{Ricker3bifur} (left) demonstrates a slightly smaller value of $\approx 0.88$. And again, this number is reduced
to
$\approx 0.86$ if a noise with $\ell=0.06$ is introduced, see  Fig.~\ref{Ricker3bifur} (right).
\begin{figure}[ht]
\centering
\includegraphics[height=.16\textheight]{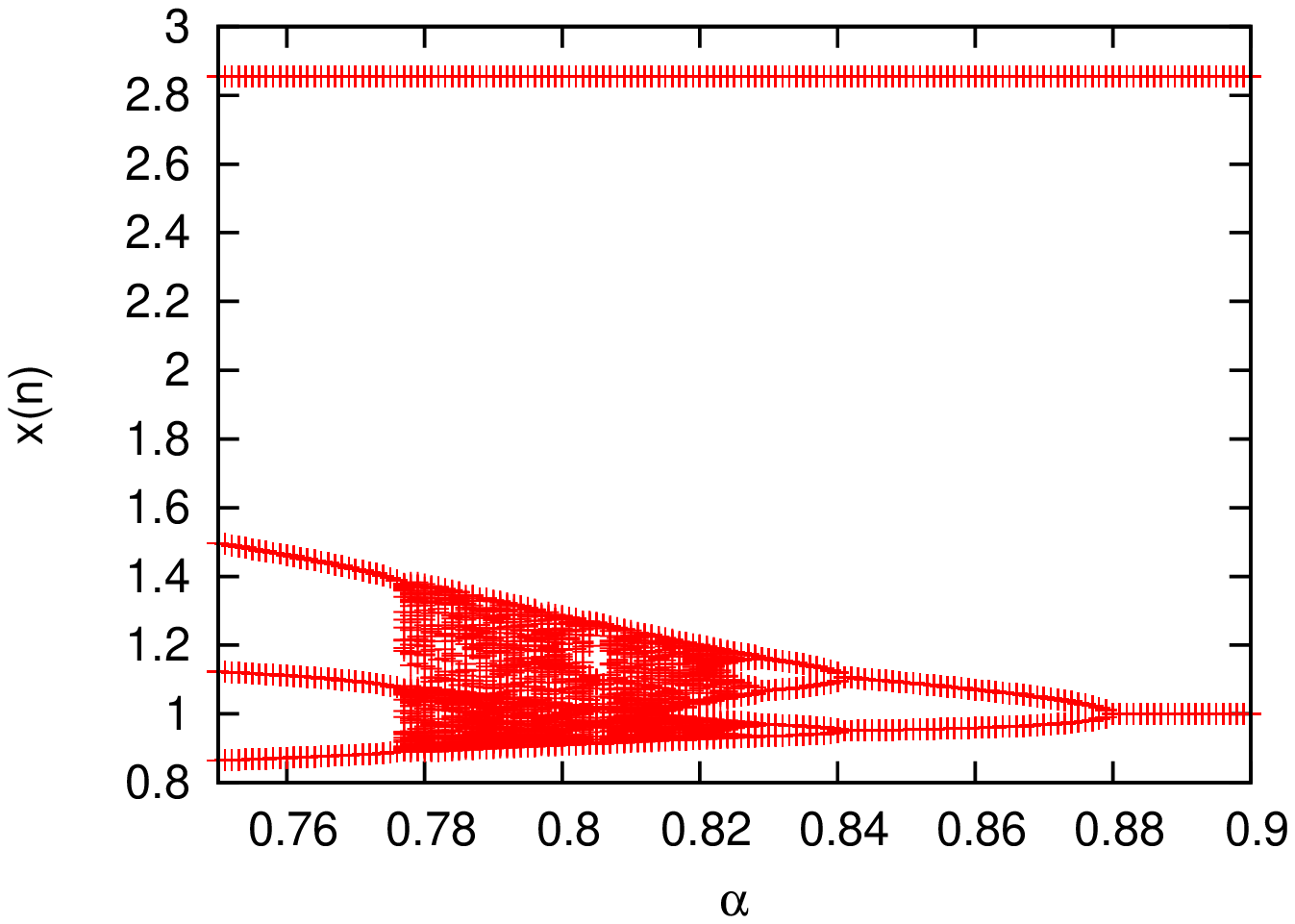}
~~
\includegraphics[height=.16\textheight]{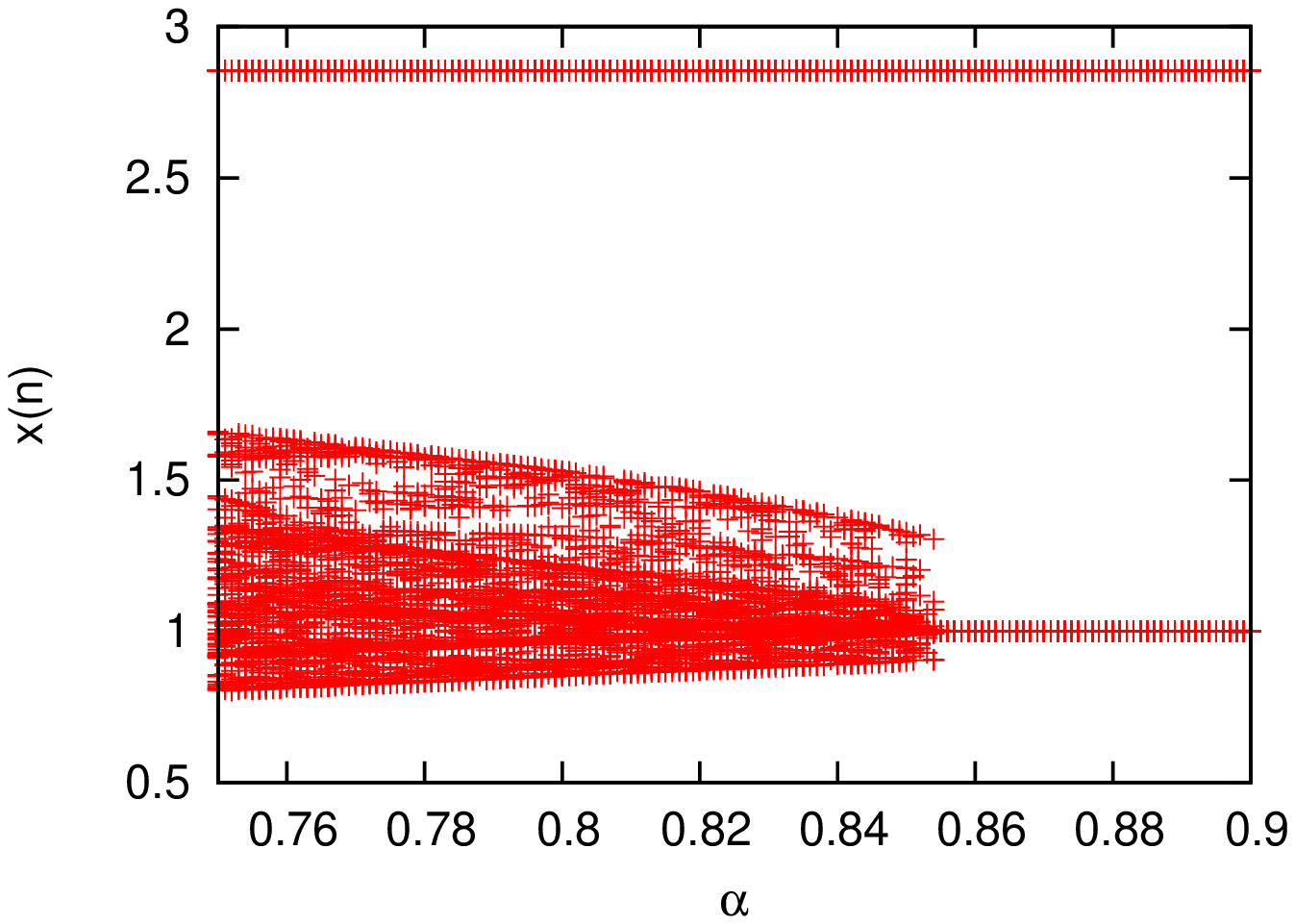}
\caption{A bifurcation diagram for the third iterate of the Ricker map
with $r=3.5$ for $\alpha\in (0.75,0.9)$ and (left) without
noise, (right) $\ell=0.06$. The last bifurcation
leading to two stable equilibrium points
occurs for $\alpha \approx 0.88$ in the deterministic case
and $\alpha<0.86$ in the stochastic case.}
\label{Ricker3bifur}
\end{figure}
\end{example}

\begin{example}
\label{ex:infder}
This example illustrates the results of Section~\ref{sec:arbeq}.
Define
\begin{equation}
\label{ex:1}
g(x)=\left \{
\begin{array}{cccc}
  0,& x\in (-\infty, 0],
\vspace{2mm}
\\
\frac{163}{63}x,& x\in (0,\, 0.9],
\vspace{2mm}
\\
x+\frac{0.6}{0.42\sqrt{0.1}}\sqrt{1-x},& \quad  x\in (0.9, \, 1],
\vspace{2mm}\\
x-\frac{5}{23\pi}\sin[10\pi(x-1)], & x\in (1, 1.2],
\vspace{2mm} \\
x-\frac{0.6}{0.42\sqrt{0.3}}\sqrt{x-1.2},& \quad  x\in \left(1.2,\, 1.5\right],
\vspace{2mm}
\\
 2x-\frac{41}{14},& \quad  x\in \left(1.5, \frac{41}{14}\right],  \vspace{2mm} \\
 x+10\sqrt{(x-\frac{41}{14})(3-x)},& \quad  x\in \left(\frac{41}{14}, 3\right],   \vspace{2mm} \\
x-\sqrt{x-3} ,& \quad  x\in (3,\infty),
  \end{array}
\right.
\end{equation}
which has 6 equilibrium points, see Fig.~\ref{Fig_graph}.
\begin{figure}[ht]
\centering
\includegraphics[height=.24\textheight]{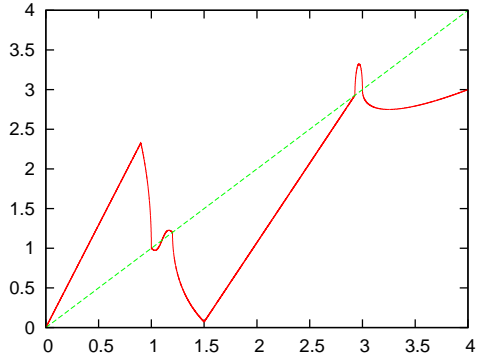}
\caption{The graph of the map defined in \protect{\eqref{ex:1}}, together with $y=x$.
}
\label{Fig_graph}
\end{figure}
Here both left- and right-side derivatives at $K=3$ are infinite, so for efficient control, we have to reduce ourselves to the segment $[0, 41/14]$ getting the set of equilibrium points
$\mathbb
K=\left\{0, 1, \, 1.1, \, 1.2,\, 41/14\right\}$, 
$K_4=41/14<\infty$.
The equilibrium points $K_1=1$ and $K_3=1.2$ are unstable, since $g'(1^-)=-\infty=g'(1.2^+)$ and $g'(1^+)=-27/23=L=g'(1.2^-)$, so $\beta_0= \underline
\alpha=L/(1+L)=0.54$. By \eqref{ex:1}, for   each $\beta$, the point of minimum of $G(\beta, x)$ is $\underline \kappa=0.9$, and the point of maximum is $\bar\kappa=1.5$ and $g:[0, \, 41/14]\to [0, \, 41/14]$.

By straightforward calculation we show that for each $\beta\in (0.54, \, 0.6040)$, the function $G(\beta, x)$
has two unstable 2-cycles, located in $[0.9; \,1)$ and $[1.2, \, 1.5)$, respectively.
For example, for $G(0.58, x)$, there are two two-cycles at approximately $\{0.9, \, 1.5 \}$ and
$\{0.9749, \, 1.2755 \}$.
For $d_k$, $\hat d$, $c_k$, $\hat c$ defined by  \eqref{def:dc4eq}
and \eqref{def:limdc} (when $\beta=0.58$) we have
$0.9<0.9749\le\hat d < d_k<K_1=1$, $1.2=K_3<c_k <\hat  c\le 1.2755<1.5$, $k\in\mathbb N$,
so the equilibrium points $1$ and $1.2$ are unstable.
For $\beta=0.6040$ there is only one 2-cycle, $\approx \{0.95, \, 1.3497 \}$. For  $\beta>0.6040$ there is no
two-cycle.

Fig.~\ref{Fig_inf_nn} (left)  presents a bifurcation diagram for $G(\alpha, \cdot)$
with $g$ defined in \eqref{ex:1}
and demonstrates efficient stabilization in the chosen segment $[0, \, 41/14]$ and attractors in addition to the two equilibrium points for $\alpha < 0.605$, as the theory predicts.
Fig.~\ref{Fig_inf_nn} (right)  contains a bifurcation diagram for  $G(\alpha, \cdot)$,
when the control $\alpha$ is perturbed by the Bernoulli noise with $\ell=0.04$, and
two stable equilibrium points starting with a smaller $\alpha\approx 0.535$.

\begin{figure}[ht]
\centering
\includegraphics[height=.18\textheight]{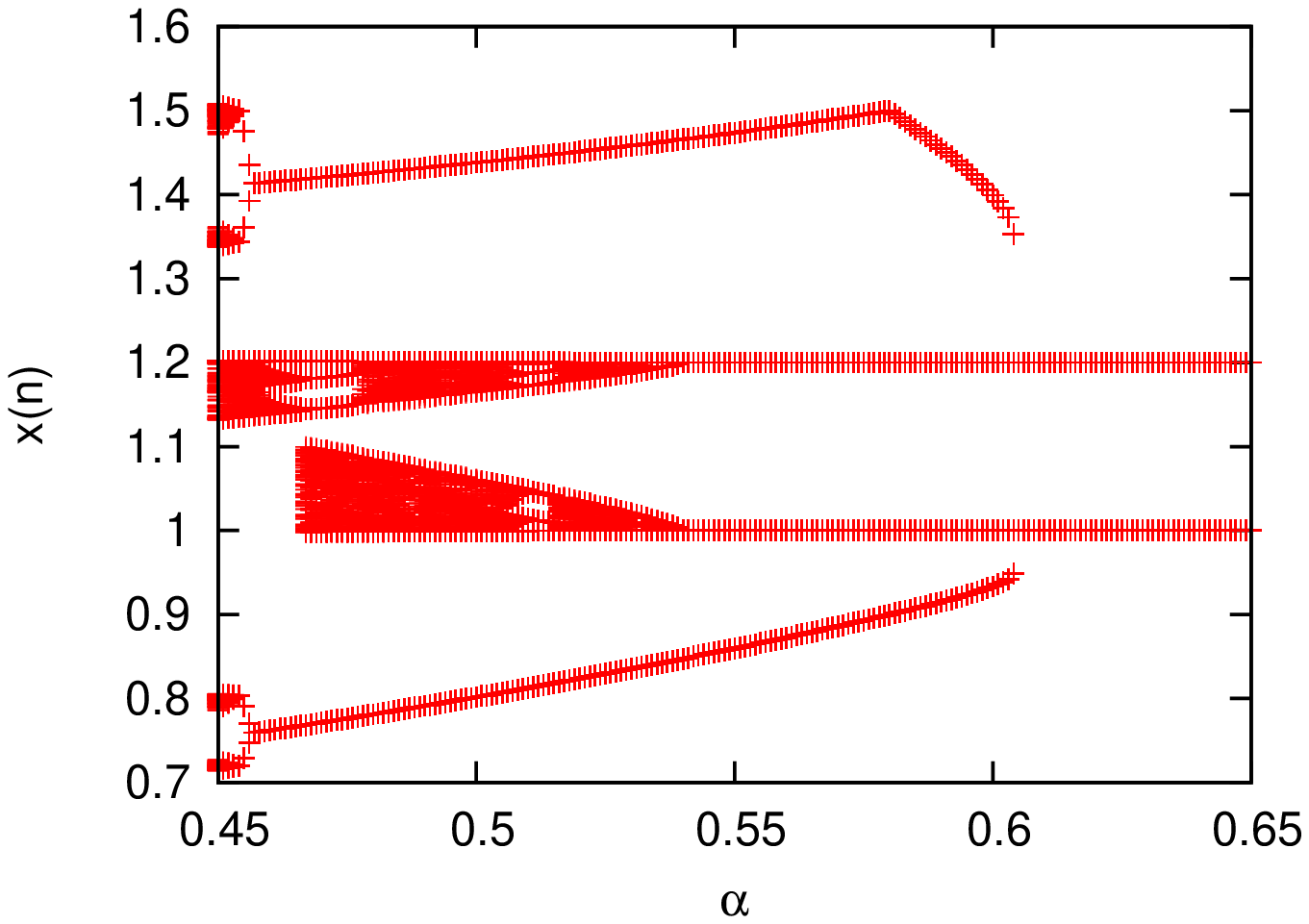}
\includegraphics[height=.18\textheight]{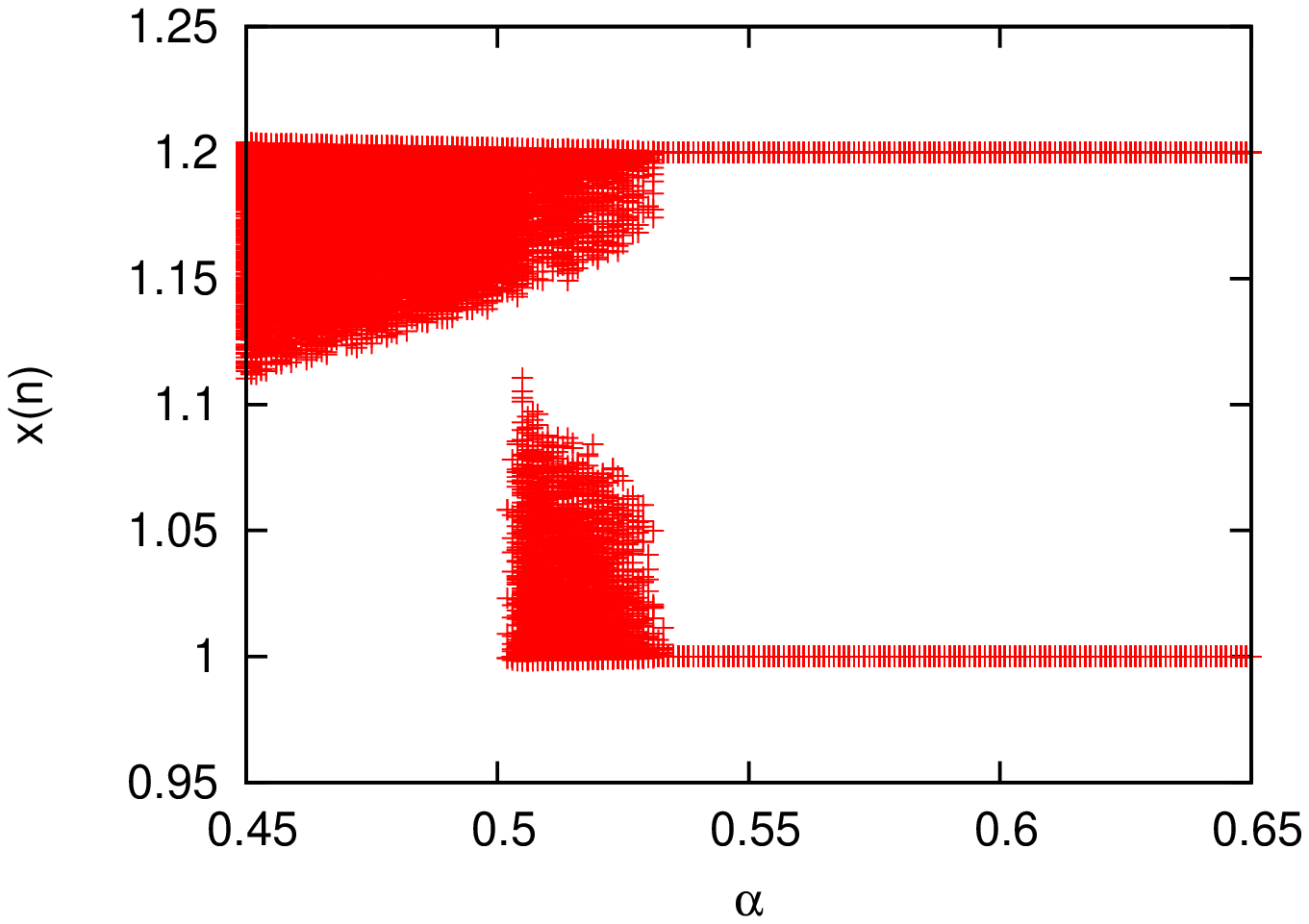}
\caption{A bifurcation diagram for the map defined in \protect{\eqref{ex:1}} with $\alpha \in
(0.45,0.65)$ and (left) no noise, we have two stable equilibrium points starting from $\alpha\approx 0.605$,
(right) for Bernoulli noise with $\ell=0.04$, the last bifurcation happens for smaller $\alpha\approx 0.535$.
Here the two attractors correspond to two stable equilibrium points with separate basins of attraction, not to a cycle.
}
\label{Fig_inf_nn}
\end{figure}
\end{example}


\section{Discussion and future research}
\label{sec:disc}

For stabilization of maps with PBC, if a map is unimodal with a negative Schwarzian derivative, the best control bound is easily computed using the derivative at an equilibrium \cite{FL2010}. If a map is not unimodal and has several fixed points, global Lipschitz two-side constants can be used to compute a required control bound in a similar way \cite{LizPBC2012}. However, for very large one-side Lipschitz constants, this bound is quite close to one and is far from being optimal. When the difference between two one-side constants is significant, it may be preferable to choose a control in such a way that, once a point gets into a smoother area, it stays there forever.
Unfortunately, for models with many fixed points, in particular, for iterates of common maps, this approach does not guarantee stabilization, as a solution can wander between ``steep'' areas. The algorithm to choose an optimal control bound, together with making it lower in the stochastic case, is the main accomplishment of the present paper. Specifically, we managed
\begin{itemize}
\item
to overcome the problem of large global Lipschitz constants compared to a derivative at an equilibrium by considering one-side constants only, which allows to relax control conditions;
\item
to include the case when a one-side derivative is infinite (in this case the constant $L/(L+1)$ is sharp);
\item
introducing a noise in the control, to relax restrictions on the average control intensity even more.
\end{itemize}

PBC was proved to be an efficient tool in stabilizing multiple equilibrium points, even in the case when the continuous map is not smooth.
Introduction of noise allowed to lower the level of average control, and this result complements \cite{BKR}. Compared to \cite{BKR2020},
the results of the present paper are global, not local, which gives an advantage in practical implementations.

Some possible extensions of the present work are listed below.

\begin{itemize}
\item
Section~\ref{sec:4eq} can be extended to the case of $\delta>0$ and an arbitrary number of equilibrium points.
Also, the results for iterates of maps can be stated in the terms of the original function and a pulse PBC control.

\item
It would be interesting to generalize our results to systems of difference equations subject to PBC control \cite{LP2014}, for example, relating to \cite{LP2014} in a stochastic case and combining \cite{Nag2016,Porfiri_SIAD} with PBC-type control.

\item
Following the previous item, results for systems can be readily applied to
models of population dynamics, controlling competition outcomes.
The essential role of stochasticity was recently analyzed in  \cite{Schreiber1,Schreiber}, see also references therein.
\end{itemize}


\section*{Acknowledgments}
The authors are very grateful to anonymous reviewers whose thoughtful comments significantly contributed to the quality of presentation of our results.



\section{Appendix}

\subsection{Proofs for sections \ref{sec:4eq}}
\label{sec:6_1}

\subsubsection{Proof of Lemma \ref{lem:conv}}

We have $x_{n+1}-x_n=(1-\alpha_{n+1})g(x_n)-(1-\alpha_{n+1})x_n=(1-\alpha_{n+1})(g(x_n)-x_n).$
So
$
|g(x_n)-x_n| = |x_{n+1}-x_n|/(1-\alpha_{n+1})\le |x_{n+1}-x_n|/(1-a)\to 0, \quad \text{as} \quad m\to \infty,
$
and, by continuity of $g$, we have
$
g(x^*)-x^*=\lim_{n\to \infty}[g(x_n)-x_n]=0.
$

\subsubsection{Proof of Lemma \ref{lem:Gv}}
Parts (i)-(iii) are immediate results of definition~\eqref{def:G} and continuity of $G$.
To prove (iv),  we note that since $1-\hat \mu=1-\frac{\mu-\mu_0}{1-\mu_0}=\frac{1-\mu}{1-\mu_0}$, we have $(1-\hat \mu)[(1-\mu_0)=1-\mu$ and  $\hat \mu=\mu-(1-\hat \mu)\mu_0$. Therefore,
\begin{align*}
G(\mu, x) & =(1-\hat \mu)(1-\mu_0)g(x) +\mu x
=(1-\hat \mu) G(\mu_0, x)-(1-\hat \mu)\mu_0 x+\mu x\\&=(1-\hat \mu) G(\mu_0, x) +[\mu-(1-\hat \mu)\mu_0] x=(1-\hat \mu) G(\mu_0, x) +\hat \mu x.
\end{align*}

\subsubsection{Proof of Lemma \ref{lem:LL+2}}
Note that $\alpha>L/(L+1) \implies L(1-\alpha)-\alpha<0$.

(i) For $x\in (K_1, K_2)$, by  Assumption~\ref{as:4eq1}, we get
\begin{align*}
K_2 & >x >G(\alpha, x)=(1-\alpha) (g(x)-K_1)+\alpha(x-K_1)+K_1
\\&\ge -L(1-\alpha) (x-K_1)+\alpha(x-K_1)+K_1
\\ & =-[L(1-\alpha)-\alpha](x-K_1)+K_1>K_1.
\end{align*}
The case $x\in (K_2, K_3)$ is similar.

(ii)  For $x\in (K_1-\delta, K_1)$, we have $K_1-\delta<x<G(\alpha, x)$ and
\[
G(\alpha, x)=(1-\alpha) (g(x)-K_1)+\alpha(x-K_1)+K_1\le [L(1-\alpha)-\alpha](K_1-x)+K_1<K_1.
\]
The case $x\in (K_3, K_3+\delta)$ is similar.

(iii) Parts (i) and (ii) imply that the solution $x_n$ remains in the same interval (either $(K_1-\delta, K_1)$ or $(K_1, K_3)$ or  $(K_3, K_3+\delta)$) as its initial value $x_0$.
In each of the three intervals the sequence $(x_n)_{n\in \mathbb N}$ is monotone and bounded, and therefore is convergent. The application of Lemma~\ref{lem:conv} completes the proof.

\subsubsection{Proof of Lemma \ref{lem:alpha0}}
 Take some $\alpha>L/(L+1)$. Applying Lemma~\ref{lem:LL+2}~(i)-(ii) we conclude that $
 \min_{x\in [K_1-\delta, K_1]}G(\beta, x)>K_1-\delta$ and $\min_{x\in [K_1, K_2]}G(\beta, x)>K_1.$
 Similar estimations can be done for $\max_{x\in [K_2, K_3+\delta]}G(\beta, x)$. So $\alpha$ belongs to the set on the second and third lines of \eqref{def:alpha0}, which, therefore, is non-empty. It also implies $\alpha\ge  \underline \alpha_0$, and proves Part~(ii).

\subsubsection{Proof of Lemma \ref{lem:delta01}}
 The case $\delta=0$  is covered by Lemma~\ref{lem:LL+2}~(i),(iii).  So we consider only $\delta>0$.

(i) Let $x\in (K_1-\delta, K_2)$, then $G(\alpha, x)>K_1-\delta$. If $x\in (K_1, K_2)$ then $G(\alpha, x)<x<K_2$ by \eqref{def:alpha0}. For $x\in (K_1-\delta, K_1)$,  since $\delta<K_2-K_1$,  we have
\begin{equation}
\label{est:delta1}
\begin{split}
G(\alpha, x)&=(1-\alpha) (g(x)-K_1)+\alpha(x-K_1)+K_1 \\ & \le L(1-\alpha) (K_1-x)+\alpha(x-K_1)+K_1\\&=[L(1-\alpha)-\alpha](K_1-x)+K_1< (x-K_1)+K_1<\delta+K_1<K_2.
\end{split}
\end{equation}
The case  $x\in (K_2, K_3+\delta)$ is similar.

(ii) Note that when $\underline \alpha_0=L/(L+1)$, we are under the assumptions of Lemma~\ref{lem:LL+2}. Assume that  $\underline \alpha_0<L/(L+1)$.
If $\alpha_*<L/(L+1)$, we set  $q:=L(1-\alpha_*)-\alpha_*$ and note that $q\in (0, 1)$ and $L(1-\alpha_{n})-\alpha_{n}<q$.
Let $x_n\in (K_1-\delta, K_1)$. Estimation~\eqref{est:delta1} gives us
$
G(\alpha_{n+1}, x_n)-K_1<[L(1-\alpha_{n+1})-\alpha_{n+1}](K_1-x_n)<q(K_1-x_n).
$
So either $G(\alpha_{n+1})$ remains in $(K_1-\delta, K_1)$ after some moment or it changes the position around $K_1$ infinitely many times. In both cases $\lim_{n\to \infty}x_n=K_1$.
If $\alpha_*=L/(L+1)$, we set $\underline \alpha_*:=0.5[\underline \alpha_0+L/(L+1)]\in \left(\underline \alpha_0,  L/(L+1)\right)$  and define  $q:=L(1-\underline\alpha_*)-\underline\alpha_*$. Since $\alpha_{n}\ge \alpha_*=L/(L+1)>\underline \alpha_*$, the rest of the proof is the same as above.


\subsubsection{Proof of Lemma \ref{lem:alpha01}}
We need to check only the case $x_0\in (K_0, K_1-\delta) \cup (K_3+\delta, \infty)$. Let $x_0\in (K_0, K_1-\delta)$, the other case is similar.
If the first relation in \eqref{cond:or1} holds, $x_n$ cannot remain in $(K_0, K_1-\delta)$ forever, since in that case it should converge to the equilibrium which is not in $(K_0, K_1-\delta)$. So it enters  $(K_1-\delta, K_3+\delta)$ in a finite number of steps, and we can apply Lemma \ref{lem:delta01}. If the second relation in \eqref{cond:or1} holds then $x_{\bar n}$  can get out to $(K_3+\delta, \infty)$ for some $\bar n\in \mathbb N$.
In this case  $x_n\in (K_1-\delta, x_{\bar n})$ for all $n\ge \bar n$, so $x_{n_0} \in (K_1-\delta,K_3+\delta)$
for some $n_0 \ge \bar n$, and we can apply Lemma~\ref{lem:delta01} again.

\subsubsection{Proof of Lemma \ref{lem:propseqcd}}
 Parts  (i)-(ii) follow from definitions \eqref{def:dc4eq} and the fact that $G(\beta, x)\ge x$ for $x\in (K_{0}, K_{1})$, and $G(\beta, x)\le x$ for $x\in (K_{3}, g_m)$, where $g_m$ is defined in \eqref{cond:1}.
Definitions \eqref{def:dc4eq}, Part (i) and Lemma \ref{lem:LL+2} imply Part~(iii).
Part   (iv) follows from \eqref{def:dc4eq}, definitions \eqref{def:kappa12mr} of $\underline \kappa(\beta)$, $\bar \kappa(\beta)$ and Part (ii).

The first part of (v) follows from definition \eqref{def:0k0}.
When  \eqref{def:k0} does not hold, we proceed to the limit as $k\to \infty$ in the equalities $G(\beta, d_k(\beta))=c_{k-1}(\beta)$ and $G(\beta, c_k(\beta))=d_{k}(\beta)$ and get $G(\beta, \hat d(\beta))=\hat c(\beta)$, $G(\beta, \hat c(\beta))=\hat d(\beta)$ by continuity.

To prove (vi), for $\tilde \beta_1>\tilde \beta_2$, we use that $G(\tilde \beta_2, x)>G(\tilde \beta_1, x)$ for $x\in  (K_{0}, K_{1})$,  and $G(\tilde \beta_2, x)<G(\tilde \beta_1, x)$ for $x\in  (K_{3}, g_m)$. Applying \eqref{def:dc4eq}  we show that $ d_1(\tilde \beta_2)>d_1(\tilde \beta_1)$. To prove $ c_1(\tilde \beta_1)>c_1(\tilde \beta_2)$, we use the inequalities
$
 \min_{y\in [c_0, x] }G(\tilde \beta_1, y) \ge  \min_{y\in [c_0, x] }G(\tilde \beta_2, y)\ge  d_1(\tilde \beta_2) >d_1(\tilde \beta_1).
 $
 The rest of the proof can be done inductively using the same approach.

\subsubsection{Proof of Lemma \ref{lem:x0beta0}}
By Lemma \ref{lem:delta01} we need to consider only $x\in (\hat d(\alpha_*), K_1-\delta)\cup  (K_3+\delta, \hat c(\alpha_*))$.  When $x_0\in (\hat d(\alpha_*), K_{1}-\delta)$, we have  $x_0\in (d_{k_1}(\alpha_*), K_{1}-\delta)$ for some $k_1\in \mathbb N$.
In this case $x_0<x_1=G(\alpha_1, x_0) <G(\alpha_*, x_0)<  c_{k_1-1}(\alpha_*)<  \hat c(\alpha_*)$,  and therefore, $x_1$ gets into either $(x_0, K_{1}-\delta)$ or $(K_{1}-\delta, K_{3}+\delta)$ or $(K_3+\delta, c_{k_1-1}(\alpha_*))$.  In the second case, a solution stays in  $(K_{1}-\delta, K_{3}+\delta)$ forever, see Lemma~\ref{lem:delta01}.  In the last case  $G(\alpha_*, x_1)> d_{k_1-1}(\alpha_*)$ by  Lemma \ref{lem:propseqcd}(i).

 So we are only interested in the situation when $x_1\in (K_{3}+\delta,  c_{k_1-1}(\alpha_*))$. We have  then $x_1>x_2=G(\alpha_2, x_1) >G(\alpha_*, x_1)> d_{k_1-1}(\alpha_*)$, and therefore, $x_2$ gets into either
$(K_{3}+\delta, x_1)$ or $(K_{1}-\delta, K_{3}+\delta)$ or $(d_{k_1-1}(\alpha_*), K_{1})$.  Circulation can happen only in the last case if  $x_3$ gets into
$(K_{3}+\delta, c_{k_1-2}(\alpha_*))$.  Applying  Lemma \ref{lem:propseqcd}~(i),
we conclude that a solution cannot make transition  between $(K_{3}+\delta, \, c_{k_1}(\alpha_*))$ and $(d_{k_1}(\alpha_*), K_{1}-\delta)$ more than $k_1+1$ times, and after each transition, a solution moves one level  closer  to $(K_{1}-\delta, K_{3}+\delta)$,
in the sense that now possible transitions are between  $(K_{3}+\delta, \, c_{k_1-1}(\alpha_*))$ and $(d_{k_1-1}(\alpha_*), K_{1}-\delta)$,
i.e. if before the transition a solution was in $(d_{i}(\alpha_*),\, c_{i}(\alpha_*))$, after the next round of transitions a solution is
in $(d_{i-1}(\alpha_*),\, c_{i-1}(\alpha_*))$. Therefore it either enters $(K_{1}-\delta, K_{3}+\delta)$ after a finite number of steps or remains in $(x_0, K_{1}-\delta)\cup (K_{3}+\delta, c_{k_1}(\alpha_*))$ and converges to either $K_1$ or $K_3$.

Similar reasoning is applied for $x_0\!\in \!(K_{3}+\delta, \hat c(\alpha_*))$.

\subsubsection{Proof of Lemma \ref{lem:k0}}

 (i) If $\underline \alpha=\underline \alpha_0$  we have
        $ \max\limits_{x\in [K_{0}, K_{1}]}G(\beta, x) <  \hat c(\beta)$ or
      $\min\limits_{y\in [K_{3}, g_m] }G(\beta, y) > \hat d(\beta)$ for any $\beta>\underline \alpha_0$ .
       Assume that $\max\limits_{x\in [K_{0}, K_{1}]}G(\beta, x) <  \hat c(\beta)$. Since $\hat c=\lim_{k\to \infty}c_k$, there exists a number $k_0$ s.t.
       $ \max\limits_{x\in [K_{0}, K_{1}]}G(\beta, x) <c_{k_0}<  \hat c(\beta)$.     By definition \eqref{def:dc4eq} this implies
       $d_{k+1}=d_{k}$ for $k\ge k_0+1$ and $\hat d(\beta)=d_{k_0}$. 		
			The case  $\min\limits_{y\in [K_{3}, g_m] }G(\beta, y) > \hat d(\beta)$ is similar.
       If condition \eqref{def:k0}  holds,  $\forall \beta\in (\beta_{0}, 1)$ we have $k_0(\beta)<\infty$. Suppose $d_{k}(\beta)=d_{k_0}(\beta)$, for all $k\ge k_0(\beta)$, then  $\max\limits_{x\in [K_{0}, K_{1}]}G(\beta, x) <  c_{k_0-1}(\beta)< \hat c(\beta)$, which implies $\beta_{1}=\beta_{0}$. The case  when $c_{k}(\beta)=c_{k_0}(\beta)$ for all $k\ge k_0$ is similar.

       (ii) If  $\underline \alpha>\underline \alpha_0$, there exists $\beta$, $\underline \alpha<\beta<\underline \alpha_0$ s.t. $\max_{x\in [K_{0}, K_{1}]}G(\beta, x)>\hat c(\beta)$ or\\ $\min_{x\in [K_{3}, g_m]}G(\beta, x)>\hat d(\beta)$. Assume the first inequality. Since $(c_n(\beta))_{n\in \mathbb N}$ is non-decreasing, we have $\max_{x\in [K_{0}, K_{1}]}G(\beta, x)> c_k(\beta)$  for each $k\in \mathbb N$, so \eqref{def:dc4eq}  implies that $(d_n(\beta))_{n\in \mathbb N}$ continues to decrease and does not stop. So $(c_n(\beta))_{n\in \mathbb N}$ also continues to increase and does not stop.   Then condition \eqref{def:k0}  does not  hold, which implies that
       $\bar\kappa(\beta)>\hat c(\beta)>c_k(\beta)$ and $\underline \kappa(\beta)<\hat d(\beta)<d_k(\beta)$ for all $k\in \mathbb N$.  So $\min_{y\in [K_{3}, g_m] }G(\beta, y) =G(\beta, \bar\kappa(\beta))<G(\beta, \hat c(\beta))=\lim_{k\to \infty}G(\beta, c_k)=\lim_{k\to \infty}d_k=\hat d(\beta)$. Analogously, $\max_{x\in [K_{0}, K_{1}]}G(\beta, x) >  \hat c(\beta)$.
Therefore $\beta<\underline \alpha$.


 \subsubsection{Proof of Theorem \ref{thm:detcd}}
Let  $x_0\in (K_0, K_1-\delta)$. Take some $\alpha_*>\underline \alpha$ and assume first that $\displaystyle \max_{x\in [K_{0}, K_{1}-\delta]}G(\alpha_*, x) ~< ~\hat c(\alpha_*)$. Since $\hat c(\alpha_*)$ is a limit of a non-decreasing sequence,  there exists $k_1$ s.t. $\max_{x\in [K_{0}, K_{1}-\delta]}G(\alpha_*, x) <  c_{k_1}(\alpha_*)<  \hat c(\alpha_*)$. From this place we just follow the scheme for the proof of Lemma \ref{lem:x0beta0}.
Similar approach applies for $x_0\!\in \!(K_{3}+\delta, g_m)$ and  $\inf_{x\in (K_{3}+\delta, g_m)}G(\alpha_*, x)>~\hat d(\alpha_*)$, as well as other cases.

\subsubsection{Proof of Theorem \ref{thm:stoch2sides}}
Fix some $\alpha$ and $\ell$ as in \eqref{def:alphaell1}. Note that it is enough  to consider only $x_0\in (g^2_m, g_m)$. By \eqref {def:alphaell1} we have $  \alpha-\ell>\underline \alpha_0, \,\alpha+\ell>\underline \alpha$, $\alpha+\ell>\alpha_n>\alpha-\ell$, $n\in \mathbb N$.
Fix some  $\displaystyle \varepsilon \in (0, 1-(\underline \alpha-\underline \alpha_0)/2)$  s.t. $\alpha+\ell>\underline \alpha+\varepsilon \ell$.
Set
\[
\alpha_*:=\alpha+(1-\varepsilon)\ell, \quad \alpha^*:=\alpha+\ell.
\]
Since $\alpha_*>\underline \alpha$ and by \eqref{def:dc4eq}, \eqref{def:underalphacd}, there exists $k_0=k_0(\alpha_*)$ s.t.
\begin{equation*}
\label{ineq:k1}
\sup_{x\in [K_{0}, K_{1}-\delta]}
\!\!\! G(\alpha_*, x) < c_{k_0}(\alpha_*) \le \hat c(\alpha_*) \mbox{ or } \inf_{y\in (K_{3}+\delta, g_m]}
\!\!\! G(\beta, y) > d_{k_0+1}(\alpha_*)>\hat d(\alpha_*).
\end{equation*}
Applying \eqref{def:dc4eq} and Lemma \ref{lem:propseqcd}~(i)-(ii), we conclude that any solution $x$ to \eqref{eq:PBCstoch} with $\alpha_n\equiv \alpha_*$ and $x_0\in  (K_{0}, K_{1}-\delta]\cup [K_{3}+\delta, g_m]$ either remains in $[g^2_m, K_{1}-\delta]$ or $ [K_{3}+\delta, g_m]$ or circulates between these two intervals. If it remains in  $[g^2_m, K_{1}-\delta]$, it will exceed $K_{1}-\delta$ in $N_1$ steps. Similarly, if the solution remains in $[K_{3}+\delta, g_m]$, it  will be less than $K_{3}+\delta$ in $N_2$ steps, where
$$
N_1:=\left[\frac{K_{1}-\delta-g^2_m}{\min_{x\in [g^2_m, K_{1}-\delta]}\{G(\alpha_*, x)-x \}}\right]+1, $$
$$ N_2:=\left[\frac{g_m-K_{3}-\delta}{\min_{x\in [K_{3}+\delta,g_m]}\{x-G(\alpha_*, x)\}}\right]+1.
$$
The circulation between those intervals cannot be more than $2k_0(\alpha_*)$ times. Let $N:=2k_0(\alpha_*) (N_1+N_2)$, then in $N$ steps the solution to \eqref{eq:PBCstoch} with $\alpha_n\equiv \alpha_*$  reaches $[K_1-\delta, K_3+\delta]$.

By Lemma \ref{lem:topor},  there exists a random moment $\mathcal N$, s.t., with probability 1, for $N$-steps in a row,  starting from~$\mathcal N$,
\[
\alpha_n=\alpha+ \ell \xi_n>\alpha+(1-\varepsilon) \ell=\alpha_*,\quad n=\mathcal N,   \mathcal N+1, \dots, \mathcal N+N.
\]
Acting as in the proof of  Lemma \ref{lem:x0beta0} and  using $x_{\mathcal N(\omega)}$ instead of $x_0$ for each $\omega\in \Omega$, we conclude that
\[
x_{\mathcal N(\omega)+n(\omega)}\in [K_{1}-\delta, \, K_{3}+\delta] \quad \mbox{for some integer} \quad n(\omega)\le 2k_0(\alpha_*), \,\, \omega\in \Omega.
\]
Since  $\alpha_n>\underline \alpha_0$ for each $n\in \mathbb N$ on all $\Omega$, and   by Lemma \ref{lem:delta01}, as soon as the solution gets into $ [K_{1}-\delta, \, K_{3}+\delta] $, it stays there and converges either to $K_3$ or to $K_1$, which concludes the proof.

\subsection{Estimation of $\underline \alpha$}
\label{subsec:exG}

In this section we discuss the case mentioned in Remark \ref{rem:exG}.
Below $ \underline \alpha_0$,  $d_i(\beta)$,  $c_i(\beta)$ ($\beta\in (0,1)$ and $i\in \mathbb N$),
$k_0(\beta)$, $\underline \alpha$ are defined as in \eqref{def:alpha0}, \eqref{def:dc4eq}, \eqref{def:0k0} and \eqref{def:underalphacd}, respectively.
Suppose  that  Assumption \ref{as:4eq1} and condition \eqref{cond:1} hold,
$x_m$ is  the only  point of maximum of $g$ on $[K_0, K_1-\delta]$,
$g$ decreases on $[K_3, g_m]$, where $g_m=g(x_m)$,  $g$ is differentiable outside of $(d_1(0), c_1(0))$ and, for some $L_1, L_2>0$,
$
g'(x)>-L_1$ for $x\in [x_m, d_1(0)]$ and $g'(x)>-L_3$ for $x\in [c_1(0), g_m]$, where, for simplicity, $L_1\ge  L_3$. Note that, since $g$ decreases on $[x_m, K_1]$, we have  $d_1(0)=g^{-1}(K_3)$ and   $c_1(0)=g^{-1}(d_1(0))$.
Also, for any $\alpha$ the function $G(\alpha, \cdot)$ has a maximum $G_{\max}(\alpha)=G(\alpha, x_{Gm})$ on $(K_0, K_1)$ and $x_{Gm}(\alpha)\in (x_m, K_1)$.

Define
\begin{equation*}
\label{def:alpha1}
\underline \alpha_1:=\max\left\{\underline \alpha_0, \,\,\frac{L_1L_3-1}{(L_1+1)(L_3+1)} \right\}.
\end{equation*}
If we prove that $\underline \alpha\le \underline \alpha_1$ then $\underline \alpha_1$ can be used for stabilization even though it might not coincide with the best (minimal) possible control.

By direct calculations we show the following.
\begin{enumerate}
\item [(i)]
$
\frac{L_1L_3-1}{(L_1+1)(L_3+1)}<\frac{L_3}{L_3+1}<\frac{L_1}{L_1+1}.
$
\item[(ii)]  If $\alpha<\frac{L_i}{L_i+1}$ then  $(1-\alpha)L_i-\alpha>0$, $i=1,3$.

\item[(iii)]   If $\alpha\in \left(\underline \alpha_1, \frac{L_3}{L_3+1}\right)$  then  $\bigl((1-\alpha)L_3-\alpha\bigr)\bigl((1-\alpha)L_1-\alpha\bigr)<1$. This holds since $z=\frac{L_1L_3-1}{(L_1+1)(L_3+1)}$ is the smallest root of the equation $\bigl((1-z)L_3-z\bigr)\bigl((1-z)L_1-z\bigr)=1$.
\end{enumerate}

By Lemma \ref{lem:comb},  for each $k\in \mathbb N$,  we have  $\hat d(0)\le d_k(0)<K_1-\delta$, $\hat c(0)\ge c_k(0)>K_1+\delta$, $g(\hat d(0))=\hat c(0)$, $g(\hat c(0))=\hat d(0)$, and, for each $\alpha\in (0,1)$, $G(\alpha, \hat d(0))<g(\hat d(0))=\hat c(0)$, $G(\alpha, \hat c(0))>g(\hat c(0))=\hat d(0)$, $d_k(0)>d_k(\alpha)$, $c_k(0)<c_k(\alpha)$. Also, $\hat d(0)>\hat d(\alpha)$ and $\hat c(0)<\hat c(\alpha)$. To prove that the last two inequalities are strict, we show that  $\{ \hat d(0),  \bar c(0) \}$ cannot be  a two-cycle for $G(\alpha, \cdot)$ with some $\bar c(0)\ge \hat c(0)$ and each $\alpha\in (0, 1)$. Indeed, assuming the contrary, we get
\[
\bar c(0)=G(\alpha, \hat d(0))=(1-\alpha)g(\hat d(0))+\alpha \hat d(0)=(1-\alpha)\hat c(0)+\alpha \hat d(0) \ge \hat c(0),
\]
which leads to a contradiction $\hat c(0)\le \hat d(0)$.

Since there is no desired stability for the original function $g$, i.e. $k_0(0)=\infty$,  we have $x_m<\hat d(0)<d_k(0)$. Also, if $k_0(\alpha)=\infty$ we have $x_m<x_{Gm}(\alpha)<\hat d(\alpha)<d_k(\alpha)$.

Suppose first that $\underline \alpha_0 <\frac{L_1L_3-1}{(L_1+1)(L_3+1)}$.
Let $\alpha_*\in  \left(\underline \alpha_1, \frac{L_3}{L_3+1}\right)$  and assume that $k_0(\alpha_*)=\infty$. Then  $x_m<x_{Gm}(\alpha_*)<\hat d(\alpha_*)<d_k(\alpha_*)<d_1(0)$, $c_1(0)<\hat c(0)$.
If we show that $\{ \hat d(\alpha_*), \hat c(\alpha_*) \}$ is not a two-cycle for $G(\alpha_*, \cdot)$, it would contradict to the assumption $k_0(\alpha_*)=\infty$ and prove that $\underline \alpha>\underline \alpha_1$.

Let $x\in  [x_m, d_1(0)]$ be such that   $G(\alpha_*, x) \in [c_1(0), g_m]$, then
\begin{equation}
\label{est:c0}
\begin{split}
&G(\alpha_*, x)-\hat c(0)=G(\alpha_*, x)-g(\hat d(0))<G(\alpha_*, x)-G(\alpha_*,\hat d(0))\\ = & (1-\alpha_*)[g(x)-g(\hat d(0))]+\alpha_*(x-\hat d(0))
\le [(1-\alpha_*)L_1-\alpha_*](\hat d(0)-x).
\end{split}
\end{equation}
Since $c_1(0)<\hat c(0)<g_m$,
\begin{align*}
\hat d(0)-G^2(\alpha_*, x)= & g(\hat c(0))-G^2(\alpha_*, x)\le G(\alpha_*,\hat c(0))-G^2(\alpha_*, x)
\\ = & (1-\alpha_*)[g(\hat c(0))-g(G(\alpha_*, x))]
+\alpha_*(\hat c(0)-G(\alpha_*, x)) \\ \le &  [(1-\alpha_*)L_3-\alpha_*](G(\alpha_*, x)-\hat c(0)).
\end{align*}
Since  $\alpha_*<\frac{L_3}{L_3+1}$, by  (ii)-(iii) we have $(1-\alpha_*)L_i-\alpha_*>0$, $i=1,2$, $\prod_{i=1}^2 [(1-\alpha_*)L_i-\alpha_*]<1$ and therefore
\begin{align*}
&\hat d(0)-G^2(\alpha_*, x)\le  [(1-\alpha_*)L_3-\alpha_*] [(1-\alpha_*)L_1-\alpha_*](\hat d(0)-x)<\hat d(0)-x,
\end{align*}
which implies $G^2(\alpha_*, x)>x$.  If $ \{ \hat d(\alpha_*), \hat c(\alpha_*) \}$ is a two-cycle for $G(\alpha_*, \cdot)$ then  $G(\alpha_*, \hat d(\alpha_*))=\hat c(\alpha_*)\in [c_1(0), g_m]$, $G(\alpha_*, \hat c(\alpha_*))=\hat d(\alpha_*)\in [x_m, d_1(0)]$ and
\[
G^2(\alpha_*, \hat d(\alpha_*))=G(\alpha_*, G(\alpha_*, \hat d(\alpha_*)))=G(\alpha_*, \hat c(\alpha_*))=\hat d(\alpha_*).
\]
However, from the above, we should have $G^2(\alpha_*, \hat d(\alpha_*))>\hat d(\alpha_*)$,  which is  a contradiction.

If $\alpha_*>\frac{L_1}{L_1+1}$ then $(1-\alpha_*)L_1-\alpha_*<0$ and from \eqref{est:c0} we conclude that $G(\alpha_*, x)<\hat c(0)$, which implies that for some $k\in \mathbb N$, we have $G(\alpha_*, x)<c_k(0)<c_k(\alpha_*)\le \hat c(\alpha_*)$, so $\alpha_*>\underline \alpha$, see \eqref{def:underalphacd}. If $G(\alpha_*, x) \in [\hat c(0), g_m]$ and $\frac{L_1}{L_1+1}>\alpha_*>\frac{L_3}{L_3+1}$, we get  $G^2(\alpha_*, x)>\hat d(0)\ge x$. From here we proceed as above  and get a contradiction for $x= \hat d(\alpha_*)$.

Assume now that $\underline \alpha_0 \ge \frac{L_1L_3-1}{(L_1+1)(L_3+1)}$. The cases $\alpha_*>\frac{L_1}{L_1+1}$ and $\frac{L_1}{L_1+1}>\alpha_*>\frac{L_3}{L_3+1}$ were considered above. If $\frac{L_3}{L_3+1}>\underline \alpha_0$ then the case
$\frac{L_3}{L_3+1}>\alpha_*>\underline \alpha_0$ has already been discussed.

\subsection{Control for an arbitrary  number of equilibrium points}
\label{subsec:arbap}
\subsubsection{Classification of intervals}
\label{subsubsec:classif}

We set, for each $i\in I^+$ and $i_0$ defined in \eqref{def:i0},
\begin{equation}
\label{def:li-+}
l_{+-}(i):=\left \{
  \begin{array}{ll}
\inf\{j>i:j\in I^-  \},& \mbox{if} \quad  \{j>i:j\in I^-  \}\neq \emptyset,\\
  i_0, &  \mbox{\rm otherwise} ,
  \end{array}
\right.
\end{equation}
and  for each $i\in I^-$
\begin{equation}
\label{def:li+-}
l_{-+}(i):=\left \{
  \begin{array}{ll}
\inf\{j>i:j\in I^+ \},& \mbox{if} \quad  \{j>i:j\in I^+ \}\neq \emptyset,\\
  i_0, &  \mbox{\rm otherwise}.
  \end{array}
\right.
\end{equation}

Introduce  the set of all consecutive intervals  $
{\mathcal V}:=\{(K_p, K_{p+1}),  K_p, K_{p+1}\in \mathbb K,\,  p=0, 1, \dots, j_0-1\}
$
with the ends in the set of equilibrium points $ \mathbb K$, see \eqref{eq:equil1}.
 We create two stages of control: with $\alpha^{(1)}$ and $\alpha^{(2)}$, $1>\alpha^{(2)}\ge \alpha^{(1)}>\bar L/(\bar L+1)$. We also distinguish between two types of  blocks of consecutive intervals from $\mathcal V$.  After application of the first stage of control,   $\alpha_n\in \left(\alpha^{(1)}, \, 1\right)$,  there will be no communication between any  blocks.  After application of the second stage of control,   $\alpha_n\in \left(\alpha^{(2)}, 1\right)$,  there will be no infinite circulation inside of any block.

There are two blocks of the first type,  $\mathcal V_0$ and $\tilde {\mathcal V}$. $\mathcal V_0$ consists of the  initial intervals from $\mathcal V$ with indexes $p$ from $0$ to some $m_0$,  $\tilde{\mathcal V}$ is
made of  the   intervals with indexes $p$ from some $\tilde m$ to $ j_0-1$.
After the application of the first control stage, if a solution gets into either  $\mathcal V_0$  or $\tilde {\mathcal V}$, it stays in the block and converges to one of equilibrium points.
There are $s_0$ blocks in the second group, and each  block of the second type contains two groups of intervals, between which   circulation is possible. In other words,
 \begin{equation}
 \label{def:mathcalV}
 {\mathcal V}= \mathcal V_{0}\bigcup \left[\bigcup_{s=0}^{s_0-1} \mathcal V_{2s+1}\right]\bigcup \tilde {\mathcal V}.
\end{equation}
Note that  $\mathcal V_0$ or  $\tilde{\mathcal V}$,  or all of $ \mathcal V_{2s+1}$ can be empty.

Now we proceed to a detailed definition  of blocks.
Assume $0\in I^-$, i.e. the left Lipschitz constant at $K_1$ is finite and less than the right Lipschitz constant at $K_1$, see \eqref{def:L2i+1} and \eqref{def:I12}. Denote $$\mathcal V_0:=\{(K_{p}, K_{p+1}), 0\le p\le 2m_0-1\},~~ m_0:=l_{-+}(0)\in I^+,$$  see 
\eqref{def:li+-}, so $K_{2m_0+1}$ is the first from the left equilibrium where the right Lipschitz constant  is finite and less than the left Lipschitz constant.

By Lemma \ref {lem:comb}, a solution remains in any interval  $[K_{2j}, K_{2j+1}]\in \mathcal V_0$ forever. If $ x_0\in [K_{2j-1}, K_{2j}]$, $j\le m_0$, then a solution can get out of that interval but only to the left and then remains in one of $[K_{2i}, K_{2i+1}]$, $0\le i\le j-1$.  So circulation is impossible if $ x_0$ is from one of the intervals of $\mathcal V_0$.
Note that if $0\in I^+$,  the set $\mathcal V_0$ is empty and $m_0=0$.
	
 Denote now  $\mathcal K_{0, 1}:=\left\{[K_{p}, \, K_{p+1}], \, 2m_0\le p\le 2m_1-1\right\}$, where $m_1=l_{+-}(m_0)\in I^-$, see  \eqref{def:li+-}, so $K_{2m_1+1}$ is the first equilibrium after $K_{2m_0}$ where the left Lipschitz constant  is finite and less than the right Lipschitz constant.  From any  interval   $(K_{2j}, \, K_{2j+1})\in \mathcal K_{0, 1}$, a solution can move to the right.  If it gets into $[K_{2j+1}, \, K_{2j+2}] \in \mathcal K_{0, 1}$,  it stays there. The first case of circulation is possible when a solution jumps over $K_{2m_1}$ to the interval of type $[K_{2j+1}, \, K_{2j+2}]$ from  $\mathcal K_{1, 2}:=\left\{(K_{p}, \, K_{p+1}), \, 2m_1\le p\le 2m_2-1 \right\}$, $m_2:=l_{-+}(m_1)\in I^+$. Infinite circulation happens  when a solution attends  infinitely many times some  intervals of type $(K_{2j}, \, K_{2j+1})$, in ascending order, from $\mathcal K_{0,1}$, and some  intervals of type $(K_{2k+1}, \, K_{2k+2})$,  in descending order,  from $\mathcal K_{1, 2}$, where $j\le m_1-1$ and $k\ge m_1$, since,  by Lemma \ref{lem:comb}, if a solution gets into  the interval $(K_{2m_1-1}, \, K_{2m_1+1})$, it stays in it.

We denote  $\mathcal V_1:=\mathcal K_{0, 1}\cup \mathcal K_{1, 2}=\left\{[K_{p}, \, K_{p+1}], \, 2m_0\le p\le 2m_2-1\right\}$.
Inductively, we
define  $\mathcal K_{2s, 2s+1}:=\left\{(K_{p}, \, K_{p+1}), \, 2m_{2s}\le p\le 2m_{2s+1}-1 \right\}$ with $m_{2s+1}:=l_{+-}(m_{2s}) \in I^-$,
 and  $\mathcal K_{2s+1, 2s+2}:=\left\{(K_{p}, \, K_{p+1}), \, 2m_{2s+1}\le p\le 2m_{2s+2}-1 \right\}$ with $m_{2s+2}:=l_{-+}(m_{2s+1})\in
I^+$,
 \begin{equation}
 \label{def:mathcalVs}
\begin{array}{l}
\displaystyle  \mathcal V_{2s+1}:=\mathcal K_{2s, 2s+1}\cup \mathcal K_{2s+1, 2s+2}=\left\{[K_{p}, \,
K_{p+1}], \, 2m_{2s}\le p\le 2m_{2s+2}-1\right\}, \\ s\le s_0-1,
\end{array}
 \end{equation}
 where $s_0=\max\{s:2m_{2s}\le j_0  \}$.  Similarly, infinite circulation happens  when a solution attends  infinitely many times some  intervals of type $(K_{2j}, \, K_{2j+1})$, in ascending order, from $\mathcal K_{2s, 2s+1}$, and some  intervals of type $(K_{2k+1}, \, K_{2k+2})$,  in descending order,  from $\mathcal K_{2s+1, 2s+2}$, where $j\le m_s-1$ and $k\ge m_s$, since,  by Lemma \ref{lem:comb}, if a solution gets into  the interval $(K_{2m_s-1}, \, K_{2m_s+1})$ it stays in it.

To illustrate this, we use again the fourth iterate of Ricker's map, see Fig.~\ref{fig:Rick4}
and Section~\ref{sec:arbeq}, and have $m_j=j$, $j=0,\dots, 4$, $\mathcal K_{0, 1}=\{(K_0, K_1), \, (K_1, K_2)\}$, $\mathcal K_{1, 2}=\{(K_2, K_3), \, (K_3, K_4)\}$, $\mathcal K_{2, 3}=\{(K_4, K_5), \, (K_5, K_6)\}$, $\mathcal K_{3, 4}=\{(K_6, K_7), \, (K_7, \infty)\}$.

If $2m_{2s_0} = j_0$ then  $\tilde{\mathcal V}=\emptyset$.
 If    $2m_{2s_0} < j_0$  then there is $i^*$ s.t. either $2m_{2s_0+i^*} =j_0$ or $2m_{2s_0+i^*} +1=j_0$. In both cases we have $m_{2s_0+i}\in I^+$  for all $i=0, 1, \dots, i^*$, see \eqref{def:I12}, and we can define non-empty
 \[
\tilde {\mathcal V}:=\left\{[K_{p}, \, K_{p+1}], \, 2m_{2s_0}\le p\le j_0-1\right\}.
\]
 Note that $j\in I^+$ in each interval $(K_{2j}, K_{2j+1})$ and $(K_{2j+1}, K_{2j+2})$ from $\tilde {\mathcal V}$. Thus, from each $[K_{2j}, K_{2j+1}]\in \tilde {\mathcal V}$, a solution can move only to the right and not further than $K_{j_0}$, and it remains in each $[K_{2j+1}, K_{2j+2}]\in \tilde {\mathcal V}$.

 All the above gives us the decomposition in \eqref{def:mathcalV}. Therefore, for each function $g$ satisfying Assumption \ref{as:LL1},
 the interval  $[K_0, K_{j_0}]$  can be represented as a union of blocks \eqref{def:mathcalV}. The only blocks where a cycle can occur are blocks $\mathcal V_{2s+1}$ with  $0\le s\le s_0-1$.

 In Sections \ref{subsec:det}-\ref{subsec:stochgen},  for a block  of this type  we briefly  discuss how to introduce
a new control  $\beta_{1}\in \left(\bar L/(\bar L+1), \,1\right)$, which eliminates  the circulation.


\subsubsection{Deterministic control}
\label{subsec:det}

Consider one of the blocks $\mathcal V_{2s+1}$ defined by \eqref{def:mathcalVs} and  consisting of two adjacent groups $\mathcal K_{2s, 2s+1}$ and $\mathcal K_{2s+1, 2s+2}$ with the lengths of $m$ and $r$, respectively.
To simplify the description of the structure for $\mathcal V_{2s+1}$, we shift indexes to zero, i.e. $s=0$. We  assume that the block ${\mathcal V}_1$ contains equilibrium points
 \begin{equation}
\label{def:Kmr1}
\begin{array}{l}
\displaystyle \mathbb K_1=\left\{K_{0},  K_{1}, \dots K_{2m-1}, K_{2m}, K_{2m+1}, K_{2m+2}, \dots,  K_{2(m+r)}\right\}\in \mathbb K,
\\
m\in \mathbb N_0, \,  r\in \mathbb N, \, r\ge 2 
\end{array}
\end{equation}
and satisfies
\begin{assumption}
\label{as:Kmr}
In the set of equilibrium points defined by \eqref{def:Kmr1}, $0, 1, 2, \dots, m \in I^+$  
 and also $m+1, \dots m+r\in I^-$ if $K_{2(m+r)}<\infty$, while $m+1, \dots m+r-1\in I^-$ if $K_{2(m+r)}=\infty$, where  $I^+$ and $I^-$ are defined as in \eqref{def:I12}.
\end{assumption}
Analogously to Theorem~\ref{thm:stoch2sides}, we can formulate and prove the result about stability on the block $\mathcal V_1$, see Lemma~\ref{lem:detmr}.

Assumption \ref{as:Kmr} implies that $m+1=l_{+-}(0)\in I^-$, i.e. the left Lipschitz constants at all $K_{2j+1}$, $j\le m$,  are finite and less than the right Lipschitz constants,  see definitions \eqref{def:li+-}, and  for  $j>m$ the opposite is valid, so
$\mathcal K_{0,1}=\{(K_p, K_{p+1}), \, \, 0\le p\le 2m\}$, $\mathcal K_{1,2}=\{(K_p, K_{p+1}), \, \, 2m\le p\le 2(m+r)-1\}$.
We define
\begin{equation}
\label{def:beta0mr}
\beta_{0}  :=  \inf\left\{\beta \in \left(\frac{\bar L}{\bar L+1},1 \right)\!: \!\!\!\!\inf_{x\in (K_{1}, K_{2(m+r)})}\!\! \!\!\!\! \!\!\!\! \!\! G(\beta, x)>K_{0} , \!\!\!\! \!\!\!\! \!\!
\sup_{x\in \left(K_{0}, K_{2(m+r)-1}\right)}\!\!\!\!\!\! \!\!\!\! \!\! G(\beta, x)<K_{2(m+r)} \right\},
\end{equation}
where for $K_{2(m+r)}=\infty$ the second inequality in the right-hand side of \eqref{def:beta0mr} holds unconditionally.

Fix some  $\beta\in \left(\beta_{0}, \, 1\right)$.
Definition \eqref{def:beta0mr} guarantees $$G(\beta, \cdot):(K_{0}, K_{2(m+r)})\to (K_{0}, K_{2(m+r)}).$$ Also,  applying Lemma~\ref{lem:comb} we conclude that $$G(\beta, \cdot):(K_{2m+1}, K_{2m+3})\to (K_{2m+1}, K_{2m+3})$$ and that  any cycle is possible only outside of $(K_{2m+1},K_{2m+3})$. Following the procedure introduced in Section~\ref{subsec:trap2},
  we extend the interval $(K_{2m+1}, K_{2m+3})$ to keep this property and then introduce the smallest $\beta$, for which infinite circulation of a solution between $\mathcal K_{0,1}$ and $\mathcal K_{1,2}$ becomes impossible.

Reasoning as in Section \ref{subsec:trap1}, we assume that $\max_{y\in [K_0, K_{2m+1}] }g(x) =: g_m
>K_{2m+3}$ and consider only such $\beta$ that $\max_{y\in [K_0, K_{2m+1}] }G(\beta, x)>K_{2m+3}$.
Define now  $\underline\kappa(\beta)$, the largest point of maximum  of $G(\beta, \cdot)$ on $(K_{0}, \, K_{2m+1})$, and $\bar \kappa(\beta)$,  the smallest point of minimum of $G(\beta, \cdot)$ on $(K_{2m+3}, \, g_m)$:
$\underline\kappa(\beta)=\sup\{y\in  \left(K_{0}, \, K_{2m+1}\right): G(\beta, y)=\sup_{x\in (K_{0}, K_{2m+1})}G(\beta, x) \},$
$\bar \kappa(\beta)= \inf\{y\in  \left(K_{2m+3}, \, g_m\right):   G(\beta, y)=\inf_{x\in (K_{2m+3}, g_m)}G(\beta, x) \}$.
By  \eqref{def:dc4eq} we introduce two convergent sequences of points, $(d_n(\beta))_{n\in \mathbb N}$ and $(c_n(\beta))_{n\in \mathbb N}$, located in $(K_0, K_{2m+1})$ and in $(K_{2m+3}, \, g_m)$, respectively, only with  $d_0:=K_{2m+1}$, $c_0:=K_{2m+3}$.
Now, as in Section \ref{subsec:trap2}, we define $k_0(\beta)$, $\hat c(\beta)$, $\hat d(\beta)$ by  \eqref{def:0k0}, \eqref{def:limdc}.
Analogues of Lemmata \ref{lem:propseqcd} and \ref{lem:x0beta0} hold when Assumption \ref{as:4eq1} is substituted by Assumptions~\ref{as:LL1},\ref{as:LLj0},\ref{as:Kmr}.
The interval $(\hat d(\beta), \hat c(\beta))$ includes $(K_{2m+1}, K_{2m+3})$ and is  invariant under $G(\beta, \cdot)$. Note  that $k_0(\beta)=\infty$ if and only if $\{ \hat c(\beta), \hat d(\beta) \}$ is a two-cycle  for $G(\beta, \cdot)$, so for this particular $\beta$ the interval of the initial values  with the desired convergence cannot be increased.
Moreover, the bound for control $\alpha$ is sharp: if it is smaller, a cycle rather than an equilibrium can be an attractor.
We introduce $\beta_1$ as
\begin{equation}
\label{def:beta1mr}
\beta_{1}:=\inf \left\{ \beta \in (\beta_{0},1):
\!\!\!\!\!\!\max_{x\in [K_{0}, K_{2m+1}]}
\!\!\!\!\!\! G(\beta, x) <  \hat c(\beta) ~ \mbox{or} \!\!\!\!\inf_{y\in (K_{2m+3}, g_m) }
\!\!\!\!\!\!G(\beta, y) > \hat d(\beta) \right\},
\end{equation}
which is larger than $\beta_0$, and show that $\beta_1$ is well-defined.
The proof of the following lemma, which is the main result of this section concerning stability on the block $\mathcal V_1$ is similar to the proof of Theorem  \ref{thm:detcd}.
\begin{lemma}
\label{lem:detmr}
Let Assumptions \ref{as:LL1}, \ref{as:LLj0} and \ref{as:Kmr} hold, and $I$, $\beta_{1}$  be defined as in \eqref{def:i0} and \eqref{def:beta1mr}.
Let $ \alpha_*\in(\beta_{1}, 1)$, $\alpha^*\in (\alpha_*, 1)$, $\alpha_n\in [\alpha_*, \alpha^*]$ for all $n\in \mathbb N$, and $x$ be a solution  to \eqref{eq:PBCvar} with $x_0\in \left(K_0, K_{2(m+r)}\right)$. Then $x$ converges  either to 
some  $K_{2i+1}$, $i\in I$,
or to an equilibrium inside $(K_p, K_{p+1})$, $0\le p\le j_0-1$.
\end{lemma}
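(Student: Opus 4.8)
The plan is to follow the scheme of the proof of Theorem~\ref{thm:detcd} (which itself reduces to Lemma~\ref{lem:x0beta0}), replacing the single pair of intervals flanking the central unstable equilibrium by the two groups $\mathcal K_{0,1}$ and $\mathcal K_{1,2}$. Since $\alpha_*>\beta_1>\beta_0$, definition~\eqref{def:beta0mr} gives $G(\alpha, \cdot):(K_0, K_{2(m+r)})\to (K_0, K_{2(m+r)})$ for every $\alpha\ge \alpha_*$, so the block is invariant and it suffices to trace the orbit inside it. By the analogues of Lemmas~\ref{lem:propseqcd} and~\ref{lem:x0beta0}, which the text asserts hold once Assumption~\ref{as:4eq1} is replaced by Assumptions~\ref{as:LL1},~\ref{as:LLj0},~\ref{as:Kmr}, the sequences $(d_k(\alpha_*))$, $(c_k(\alpha_*))$ built from~\eqref{def:dc4eq} with $d_0:=K_{2m+1}$, $c_0:=K_{2m+3}$ produce an invariant interval $(\hat d(\alpha_*), \hat c(\alpha_*))\supseteq (K_{2m+1}, K_{2m+3})$, and since $\alpha_*>\beta_1$ definition~\eqref{def:beta1mr} forces $k_0(\alpha_*)<\infty$, so that $\{\hat d(\alpha_*), \hat c(\alpha_*)\}$ is \emph{not} a two-cycle.

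First I would dispose of the core region $x_0\in (\hat d(\alpha_*), \hat c(\alpha_*))$ exactly as in Lemma~\ref{lem:x0beta0}. Within each of the two groups $\mathcal K_{0,1}$ and $\mathcal K_{1,2}$, parts (a)--(c) of Lemma~\ref{lem:comb} guarantee that the orbit migrates monotonically toward the central invariant interval $(K_{2m+1}, K_{2m+3})$, leaving any one constituent interval only in that direction; the sole mechanism for non-convergence is overshoot of $(K_{2m+1}, K_{2m+3})$ that lands the orbit in the opposite group, i.e. circulation. Combining this with the strict monotonicity and nesting of $(d_k)$, $(c_k)$ (analogue of Lemma~\ref{lem:propseqcd}(i)) and with $k_0(\alpha_*)<\infty$, each such crossing carries the orbit from $(d_i(\alpha_*), c_i(\alpha_*))$ into $(d_{i-1}(\alpha_*), c_{i-1}(\alpha_*))$, one level closer to the center, so at most $\approx 2k_0(\alpha_*)$ crossings can occur; thereafter the orbit enters $(K_{2m+1}, K_{2m+3})$ and, by Lemma~\ref{lem:comb}(c), converges to $K_{2m+1}$, to $K_{2m+3}$, or to an equilibrium in $(K_{2m+1}, K_{2m+2})\cup (K_{2m+2}, K_{2m+3})$.

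Next, for $x_0$ lying outside $(\hat d(\alpha_*), \hat c(\alpha_*))$ I would argue as in Theorem~\ref{thm:detcd}. The inequality selected in~\eqref{def:beta1mr} for $\alpha_*>\beta_1$, say $\max_{x\in [K_0, K_{2m+1}]}G(\alpha_*, x)<\hat c(\alpha_*)$, together with $\hat c(\alpha_*)=\lim_k c_k(\alpha_*)$, yields an index $k_1$ with $\max_{x\in [K_0, K_{2m+1}]}G(\alpha_*, x)<c_{k_1}(\alpha_*)<\hat c(\alpha_*)$; this prevents the orbit from lingering in the steep outer intervals of $\mathcal K_{0,1}$ and drives it into $(\hat d(\alpha_*), \hat c(\alpha_*))$ in finitely many steps, reducing to the previous case. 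The symmetric inequality $\inf_{y\in (K_{2m+3}, g_m)}G(\alpha_*, y)>\hat d(\alpha_*)$ handles the steep outer intervals of $\mathcal K_{1,2}$ identically, and Lemma~\ref{lem:conv} then identifies every limit as a fixed point.

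The main obstacle is the multi-interval bookkeeping inside each group, which was trivial in the four-equilibrium case where a group was a single interval. I would take the most care in verifying that the ``one level closer after each crossing'' induction survives when a crossing may deposit the orbit several constituent intervals deep into the opposite group rather than into a single interval, and that no interior even-indexed equilibrium can trap the orbit at a non-fixed-point limit. Both points are precisely what Lemma~\ref{lem:comb}(a)--(c) supply, so the difficulty is organizational rather than conceptual: it reduces to confirming that the sequences built with $d_0=K_{2m+1}$, $c_0=K_{2m+3}$ faithfully capture the full depth of possible overshoot across all intervals of the opposite group, which is what makes the transfer of Lemmas~\ref{lem:propseqcd} and~\ref{lem:x0beta0} to this setting legitimate.
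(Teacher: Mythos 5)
Your proposal follows exactly the route the paper takes: the paper's proof of Lemma~\ref{lem:detmr} is simply the observation that the argument of Theorem~\ref{thm:detcd} (hence of Lemma~\ref{lem:x0beta0}) transfers once the sequences $(d_k)$, $(c_k)$ are seeded at $d_0=K_{2m+1}$, $c_0=K_{2m+3}$ and Lemma~\ref{lem:comb} handles the within-group monotonicity, which is precisely your plan. Your added care about the multi-interval bookkeeping and the finiteness of $k_0(\alpha_*)$ for $\alpha_*>\beta_1$ is a faithful elaboration of what the paper leaves implicit, so the proposal is correct and essentially identical in approach.
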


\begin{remark}
\label{rem:2cycle}
Reasoning as in  Lemmata  \ref{lem:propseqcd} and \ref{lem:k0}, we can show that  the choice of lower bound $\beta_1$ by \eqref{def:beta1mr} is quite sharp: when
$\beta_1>\beta_0$, existence of a two-cycle for some $\beta>\beta_0$ confirms that not all solutions to \eqref{eq:PBCvar} converge to an equilibrium.
 \end{remark}

   \subsection{Proof of Theorem \ref{thm:det}}

According to Section \ref{subsubsec:classif}, all the intervals $(K_0, K_{j_0})$ can be represented by \eqref{def:mathcalV} as a union of blocks of the intervals $$\mathcal V_{0}\bigcup \left[\bigcup_{s=0}^{s_0-1} \mathcal V_{2s+1}\right]\bigcup \tilde {\mathcal V},$$  where inside of blocks $\mathcal V_{0}$ and $\tilde{\mathcal V}$ a solution cannot circulate.
The first stage of control, $\alpha^{(1)}\in (0,1)$, stops communication between blocks. By Assumptions \ref{as:LL1}-\ref{as:LLj0} and the form of the function $G(\alpha, x)$, see \eqref{def:G}, such $\alpha^{(1)}$ exists.  Applying  Lemma \ref{lem:detmr}, for each block of type $\mathcal V_{2s+1}$ we find a control $\beta_1(s)$ which stops circulation inside of $\mathcal V_{2s+1}$.  By setting $\alpha_*>\max\{\alpha^{(1)},  \beta_1(s), s=0, \dots, s_0-1\}$ we conclude the proof.

\subsubsection{Stochastic control}
\label{subsec:stochgen}
In this section we briefly discuss some situations when a control perturbed by  an additive noise, $\alpha+\ell \xi_{n}$,  can improve the deterministic result, decreasing the lower bound of $\alpha$.

%
\begin{assumption}
\label{as:stochmr}
Let in Assumptions~\ref{as:LL1},\ref{as:LLj0},\ref{as:Kmr} the set $\mathbb K$ be defined by  \eqref{def:Kmr1}, and $g(x)\neq x$ for $x\in (K_p, K_{p+1})$ and all $K_p, K_{p+1}\in \mathbb K$.
\end{assumption}
Let $\beta_0$, $\underline \kappa(\beta)$, $d_k$, $\bar\kappa(\beta)$, $c_k$ be as in Section \ref{subsec:det} (see also  Section \ref{subsec:trap2}).
Circulation of a solution is possible only if both  $G(\beta_0, \underline \kappa(\beta_0))>K_{2m+3}$ and $G(\beta_0, \bar \kappa(\beta_0))< K_{2m+1}$ hold. In this case we can define
\begin{equation}
\label{def:tautheta}
\begin{split}
\tau:=\inf\{s\in\{0, \dots, m\}:K_{2s+1}\ge  \underline \kappa(\beta_0)  \},\,\, & \mbox{where} \,\,  \underline \kappa(\beta_0)\in (K_{2\tau}, K_{2\tau+1}),\\
\theta:=\sup\{s>m+1:K_{2s+1}\le  \bar\kappa(\beta_0)  \},\,\, & \mbox{where} \,\,  \bar \kappa(\beta_0)\in (K_{2\theta+1}, K_{2\theta+2}).
\end{split}
\end{equation}
Notation \eqref{def:tautheta} yields that $G(\beta_0, \cdot)$ takes its maximum with respect to all intervals to the left of $ K_{2\tau}$  on  $ [K_{2\tau}, K_{2\tau+1}]$.
Applying Lemma \ref{lem:Gv}(iv), we can show that the same holds for  $G(\beta, \cdot)$, $\beta \in (\beta_0,1)$. Similarly, $G(\beta_0, \cdot)$ takes its minimum  with respect to all the intervals to the right of
$K_{2\theta+2}$  on  $ [K_{2\theta +1}, K_{2\theta+2}]$,  and the same holds for  $G(\beta, \cdot)$, $\beta \in (\beta_0,1)$.  Define now
\begin{equation}
\label{def:beta21}
\begin{split}
\beta_{21}=\inf
& \left\{\beta  \in (\beta_0,1) :  \!\! \!\! \sup_{x\in [K_{2\tau+1},K_{2m+1}]}\!\! \!\! \!\! G(\beta, x) < K_{2m+3}, \right.
\\
& \left. \sup_{x\in [K_{2\tau}, K_{2\tau+1}]}\!\! \!\! \!\! G(\beta, x) < K_{2\theta+2} \right\},
\\
\beta_{22}=\sup & \left\{ \beta \in (\beta_{21},1):\sup_{x\in [K_{2\tau}, K_{2\tau+1}]}G(\beta, x) > K_{2\theta+1}
\right\},\\
\beta_{23}=\inf & \left\{ \beta \in (\beta_{0},1) : \!\!\!\!\inf_{x\in [K_{2m+3}, \, K_{2\theta+1}]}
\!\!\!\!
G(\beta, x) > K_{2\tau+1},\right. \\
& \left. \inf_{x\in [ K_{2\theta+1}, \,  K_{2\theta+2}]}\!\!\!\! \!\! G(\beta, x) > K_{2\tau} \right\}, \\
\beta_{24}=\sup & \left\{ \beta \in (\beta_{23},1):\inf_{x\in [ K_{2\theta+1}, \,  K_{2\theta+2}]}G(\beta, x) < K_{2\tau+1} \right\}.
\end{split}
\end{equation}
\begin{remark}
\label{rem:beta1234}
If the set on the third line of \eqref{def:beta21} is empty then $\beta_{21}$  is the stabilization lower bound. If the set on the last line is empty, we  get the stabilization lower bound $\beta_{23}$. Since in these two cases a stabilization bound is known, we proceed to establishing the required $\beta$ when these sets are non-empty.

For $\beta\in (\beta_{21}, \beta_{22})$ we have $\sup_{x\in [K_{2\tau}, K_{2\tau+1}]}G(\beta, x) \in( K_{2\theta+1}, K_{2\theta+2})$, and for $\beta\in (\beta_{23}, \beta_{24})$ we have $\inf_{x\in [K_{2\theta+1}, \,  K_{2\theta+2}]}G(\beta, x) \in( K_{2\tau}, K_{2\tau+1})$.

If $\beta_{22}\le \beta_{23}$  then $\sup_{x\in [K_{2\tau}, K_{2\tau+1}]}G(\beta, x) \le  K_{2\theta+1}$
for any  $\beta>\beta_{22}$, which means that the circulation of a solution between the intervals $ (K_{2\tau}, K_{2\tau+1})$ and $(K_{2\theta+1}, \,  K_{2\theta+2})$ is impossible. Therefore we consider only the case
when $\beta_{22}>\beta_{23}$.
\end{remark}

\begin{assumption}
\label{as:bet2224}
Assume that $\beta_{2j}$, $j=1,\dots, 4$,  be well defined by \eqref{def:beta21} and
\begin{equation}
\label{cond:notempty}
(\beta_{21}, \beta_{22})\cap (\beta_{23}, \beta_{24})=({\underline\beta}_2, \bar\beta_2)\not = \emptyset.
\end{equation}
\end{assumption}

\begin{remark}
\label{rem:aux3}
For  $\beta\in ({\underline\beta}_2, \bar\beta_2)$, Assumption \ref{as:bet2224} implies
\begin{equation}
\label{eq:incl}
\begin{split}
& \underline\kappa(\beta), d_k(\beta), \hat d(\beta)\in (K_{2\tau}, K_{2\tau+1}) \mbox{~~~and~} \\
& \bar\kappa(\beta), c_k(\beta),  \hat c(\beta)\in (K_{2\theta+1}, K_{2\theta+2}),~ k\in \mathbb N.
\end{split}
\end{equation}
\end{remark}
\begin{remark}
\label{rem:cases}
Note that Assumption \ref {as:bet2224} means that there is a possibility of circulation of a solution between intervals $( K_{2\tau}, K_{2\tau+1})$ and $( K_{2\theta+1}, K_{2\theta+2})$. By the choice of $\beta_{ij}$ in \eqref{def:beta21}, we eliminate circulation inside of  $(K_{2\tau+1}, \, K_{2\theta+1})$, turning $( K_{2\tau+1}, K_{2\theta+1})$ into a new trap.  By \eqref{def:beta21} and \eqref {cond:notempty}
the  solution cannot get above $ K_{2\theta+2}$ from $( K_0, K_{2\tau})$, and it  cannot get below $K_{2\tau}$ from $( K_{2\theta+1}, K_{2(m+r)})$.
\end{remark}

\begin{remark}
\label{rem:stoch}
If   $\max_{x\in [K_{2m}, K_{2m+1}]}g(x)\le K_{2m+4}$  and  $\min_{x\in [K_{2m+3},
K_{2m+4}]}g(x)\ge K_{2m}$ then $\beta_{21}=\beta_{23}=\beta_0$, and if $\min\{\beta_{22}, \beta_{24}\}>\beta_0$, circulation between intervals
is possible.
Such situation is illustrated in  Section~\ref{sec:ex}, Examples~\ref{ex:Ricker2} and \ref{ex:infder}, where $m=0$ and $r=2$.
\end{remark}

To show that under Assumptions \ref{as:stochmr}  and \ref{as:bet2224} introduction of the noise into control can improve the deterministic result, we set
\begin{equation}
\label{def:beta3mr}
\begin{split}
\beta_{3}:= \inf & \left\{\beta\in ({\underline\beta}_2, \bar\beta_2): \!\!\!\! \sup_{x\in  (K_{2\tau}, K_{2\tau+1})}\!\!\!\! G(\beta, x) <  \hat c(\beta) \, \mbox{~~~or} \right.
\\
& \left. \inf_{y\in(K_{2\theta+1}, K_{2\theta+2})} \!\!\!\! G(\beta, y) > \hat d(\beta)\right\}.
\end{split}
\end{equation}
Relations \eqref{def:beta21}, \eqref{cond:notempty} and \eqref{eq:incl} imply that $\beta_3$ is well defined.

\begin{remark}
\label{rem:beta23}
If $\underline \beta_{2}=\beta_{3}$ and $\alpha\in (\underline \beta_{2}, \bar\beta_2)$, by Lemma \ref{lem:detmr},  we do not need to introduce
a noise perturbation $\ell \xi_n$ to achieve stabilization of all equilibria in $\mathbb K$.
Any small noise perturbation with  $\ell<
\min\{\bar\beta_{2} -\alpha, \,\alpha-\underline \beta_{2}\}$ keeps this stability which was achieved by the deterministic part  $\alpha$ of the  control $\alpha_n=\alpha+\ell \xi_n$. Theorem \ref{thm:stochmr} below is devoted to the case when $\underline \beta_{2}<\beta_{3}$, and the noise plays an active role in stabilization of the equilibrium points $K_{2\tau+1}$ and $K_{2\theta+1}$.
 \end{remark}
 Choose some
\begin{equation}
\label{def:almr0}
\alpha\in\left((\underline \beta_{2}+\beta_{3})/2, \, \bar \beta_2\right),
\quad \ell\in \left( \beta_{3}-\alpha,\, \min\{\alpha-\underline \beta_{2}, \, \bar \beta_2-\alpha\} \right).
\end{equation}
By the definition of $\beta_3$ in \eqref{def:beta3mr} and the choice of $\alpha$ in \eqref{def:almr0},  the second interval in \eqref{def:almr0} is non-empty.

\begin{theorem}
\label{thm:stochmr}
Let Assumptions \ref{as:LL1}-\ref{as:bet2224} hold.  Let  $\mathbb K$ be defined as in \eqref{def:Kmr1} and $\alpha$, $\ell$ satisfy
\eqref{def:almr0}.  Then any solution  to \eqref{eq:PBCstoch}  with $x_0\in (K_{0}, K_{2(m+r)})$ converges to one of the equilibrium points in $\mathbb K$, with a total probability of one.
\end{theorem}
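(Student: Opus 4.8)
The plan is to reduce Theorem~\ref{thm:stochmr} to the two-interval situation already handled in Theorem~\ref{thm:stoch2sides}, and then to rerun that theorem's stochastic argument almost verbatim. First I would record what \eqref{def:almr0} buys: since $\ell<\alpha-\underline\beta_2$ and $\ell<\bar\beta_2-\alpha$, every realized control satisfies $\underline\beta_2<\alpha-\ell\le\alpha_n\le\alpha+\ell<\bar\beta_2$, while $\ell>\beta_3-\alpha$ makes $\alpha+\ell>\beta_3$. Thus for all $n$ and all $\omega$ we have $\alpha_n\in(\underline\beta_2,\bar\beta_2)$, so by Remark~\ref{rem:cases} the interval $(K_{2\tau+1},K_{2\theta+1})$ is an invariant circulation-free trap, the block $(K_0,K_{2(m+r)})$ is invariant, and the only intervals between which a solution can circulate are $(K_{2\tau},K_{2\tau+1})$ (where $g(x)>x$) and $(K_{2\theta+1},K_{2\theta+2})$ (where $g(x)<x$); moreover, by Remark~\ref{rem:aux3}, for each such control the extension sequences obey $d_k(\beta),\hat d(\beta)\in(K_{2\tau},K_{2\tau+1})$ and $c_k(\beta),\hat c(\beta)\in(K_{2\theta+1},K_{2\theta+2})$. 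This is precisely the geometry of Theorem~\ref{thm:stoch2sides}, with $K_{2\tau+1},K_{2\theta+1}$ playing the roles of $K_1,K_3$, the threshold $\beta_3$ of \eqref{def:beta3mr} in the role of $\underline\alpha$, and $\underline\beta_2$ in the role of $\underline\alpha_0$.

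Next I would fix $\varepsilon\in(0,1)$ small enough that $\alpha_*:=\alpha+(1-\varepsilon)\ell>\beta_3$, set $\alpha^*:=\alpha+\ell$, and argue that under the constant floor control $\alpha_n\equiv\alpha_*$ a solution enters the trap in a bounded number of steps. Since $\alpha_*>\beta_3$, the analogues of Lemmata~\ref{lem:propseqcd} and \ref{lem:k0} give $k_0(\alpha_*)<\infty$, so the sequences $d_k(\alpha_*),c_k(\alpha_*)$ terminate, there is no two-cycle, and $(\hat d(\alpha_*),\hat c(\alpha_*))\supset(K_{2\tau+1},K_{2\theta+1})$ is $G(\alpha_*,\cdot)$-invariant. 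Tracking the $d_k,c_k$ levels exactly as in Lemma~\ref{lem:x0beta0} bounds the number of switches between the two dangerous intervals by $2k_0(\alpha_*)$; and Assumption~\ref{as:stochmr} ($g(x)\neq x$ on every open $(K_p,K_{p+1})$) furnishes strictly positive minima of $G(\alpha_*,x)-x$ and of $x-G(\alpha_*,x)$ on the relevant compact sub-intervals, bounding each sojourn in a dangerous interval by $N_1$, resp. $N_2$, exactly as in Theorem~\ref{thm:stoch2sides}. Hence the trap is reached within $N:=2k_0(\alpha_*)(N_1+N_2)$ steps.

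For the stochastic step I would invoke Lemma~\ref{lem:topor} with $\mathbf J=N$ and the same $\varepsilon$ to obtain, almost surely, a random moment $\mathcal N$ with $\alpha_n=\alpha+\ell\xi_n\ge\alpha+(1-\varepsilon)\ell=\alpha_*$ for $n=\mathcal N,\dots,\mathcal N+N$. Re-running the driving argument above from $x_{\mathcal N(\omega)}$ — using only $\alpha_n\ge\alpha_*$ together with monotonicity of $G$ in the control from Lemma~\ref{lem:Gv}(iii) — drives the solution into $(K_{2\tau+1},K_{2\theta+1})$ within $N$ steps for a.e. $\omega$. Because $\alpha_n\in(\underline\beta_2,\bar\beta_2)$ for every $n$ on all of $\Omega$, the trap remains invariant and circulation-free under the actual random controls as well, so once inside the solution stays and converges to an equilibrium of $\mathbb K$ by the deterministic conclusion of Lemma~\ref{lem:detmr} restricted to the trap; this yields convergence with total probability one.

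The hard part will be the two deterministic ingredients behind the uniform bound $N$: confirming that $\alpha_*>\beta_3$ genuinely forces the extension sequences to terminate (the analogue of Lemma~\ref{lem:k0}, i.e.\ the exclusion of a two-cycle), which controls the number of inter-interval switches; and extracting a strictly positive per-step displacement on the dangerous intervals, which is exactly where the non-degeneracy Assumption~\ref{as:stochmr} is indispensable, preventing the displacement from vanishing near a non-stabilizable equilibrium. Once the reduction of the first paragraph is in place, both can be transcribed from the four-equilibrium analysis, and the remainder is a faithful copy of the proof of Theorem~\ref{thm:stoch2sides}.
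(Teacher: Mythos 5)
Your proposal is correct and follows essentially the same route as the paper: identify $(K_{2\tau+1},K_{2\theta+1})$ as the trap with $\beta_3,\underline\beta_2$ playing the roles of $\underline\alpha,\underline\alpha_0$, use $\alpha_*=\alpha+(1-\varepsilon)\ell>\beta_3$ to get $k_0(\alpha_*)<\infty$ and a deterministic bound $N$ on the number of steps (finitely many switches times bounded sojourn lengths, the latter via strictly positive displacement on compact subintervals shrunk away from the equilibria), then apply Lemma~\ref{lem:topor} with $\mathbf J=N$ and conclude by invariance of the trap under all realized controls. The paper merely makes explicit the margins $\sigma$ and $\delta$ that define those compact subintervals and phrases the circulation bound via stopping times $\mathcal M_i$ and a zero-probability event, but the substance is the same.
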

\begin{proof}



 Set
$\tilde G(\beta_0):=\max_{x\in [K_{0}, K_{2m+1}]} G(\beta_0, x) $ for $\beta_0$ as in \eqref{def:beta0mr}. Choose $\alpha$ and $\ell$ satisfying  \eqref{def:almr0} arbitrarily. Since $\alpha+\ell\in\left(\beta_{3}, \bar \beta_2\right)$,  we can find an $\varepsilon>0$ such that
$\alpha+(1-\varepsilon)\ell\in \left(\beta_{3}, \, \bar \beta_2\right)$, and also we have $\alpha-\ell>\underline \beta_2$.
Relations  \eqref{def:tautheta}, \eqref{def:beta21}, \eqref{cond:notempty},  \eqref{eq:incl}
imply that whenever $\beta\in ({\underline\beta}_2, \bar\beta_2)$, we have
$
 \inf_{x\in (K_{2m+3}, K_{2(m+r)})}G(\beta, x)>K_{2\tau}, \quad \sup_{x\in (K_{0},K_{2m+1})}G(\beta, x)<\min\{\tilde G(\beta_0), K_{2\theta+2}\}.
$
Therefore  there exists $\sigma=\sigma(\beta)>0$ s.t.
 \begin{equation}
\label{def:sigma}
\begin{split}
& \inf_{x\in (K_{2m+3}, K_{2(m+r)})}\!\!\!\! G(\beta, x)>K_{2\tau}+\sigma, \\
& \sup_{x\in (K_{0},K_{2m+1})}
\!\! G(\beta, x)<\min\{\tilde G(\beta_0), K_{2\theta+2}\}-\sigma.
\end{split}
 \end{equation}
 By Remark \ref{rem:beta1234}  we have $\min\{\tilde G(\beta_0), K_{2\theta+2}\}>K_{2\theta+1}$.  If \eqref{def:sigma} holds for a given $\beta_1$ and $\sigma$, it is also satisfied for the same $\sigma$ and any $\beta \in (\beta_1,1)$. So the  inequality \eqref{def:sigma}  holds for any
 \begin{equation}
\label{def:sigma1}
 \sigma\le \left\{\sigma(\alpha-\ell), \,\, \frac{K_{2\tau+1}-K_{2\tau}}2, \,\, \frac{\min\{\tilde G(\beta_0), K_{2\theta+2}\}-K_{2\theta+1}}2\right\}.
 \end{equation}
Define, for  $\beta\in [\alpha-\ell, \alpha+\ell]$,
  \begin{equation}
\label{def:deltabeta}
\delta(\beta):=\min\left\{K_{2\tau+1}-d_1(\beta), \,\,  c_1(\beta) - K_{2\theta+1}\right\}.
\end{equation}
By \eqref{eq:incl} the right-hand side in \eqref{def:deltabeta} is positive, and, by Lemma~\ref{lem:propseqcd} (vi),  we have  $\delta(\alpha+\ell)\ge \delta(\beta)\ge\delta(\alpha-\ell)$.
Define, for $\sigma$ satisfying \eqref{def:sigma1} and $\delta\le \delta(\alpha-\ell)$, which is small enough,
 \begin{equation*}
\label{def:DN12}
\begin{split}
 &\Delta_1:=\!\! \min\left\{G(\alpha+\ell, x)-x, \,x\in [K_{2\tau}+\sigma, K_{2\tau+1}-\delta]\right\}, \\
 &\Delta_2 := \!\!\min\!\left\{x-G(\alpha+\ell, x), \, x\in \left[K_{2\theta+1}+\delta, \min\{\tilde G(\beta_0), K_{2\theta+2}\}-\sigma\right]\right\}, \\
 & N_1:=\left[\frac {K_{2\tau+1}-\delta-K_{2\tau}-\sigma}{\Delta_1}\right]+1, \\ & N_2:=\!\!\left[\frac {\min\{\tilde G(\beta_0), K_{2\theta+2}\}-\sigma-K_{2\theta+1}-\delta}{\Delta_2}\right]+1,
 \end{split}
\end{equation*}
so the solution $x_n$ starting in each interval gets out of it  in less than $N_1$ (respectively, $N_2$) steps, for each $n\in \mathbb N$.
Now we follow the steps of the proof of Lemmata~\ref{lem:x0beta0} and ~\ref{lem:detmr}. We put
\begin{equation*}
\label{def:alpha*1}
\alpha_*:=\alpha+\ell (1-\varepsilon), \quad \alpha^*:=\alpha+\ell, \quad \mbox{which implies} \quad \underline \beta_2<\beta_3<\alpha_*<\alpha^*<\bar \beta_2.
\end{equation*}
Define $k_0=k_0(\alpha_*)$ by \eqref{def:0k0} and note that, since  $\alpha_*>\beta_3$ and by definition \eqref{def:beta3mr} of $\beta_3$  (see also \eqref{def:dc4eq}), we have   $k_0(\alpha_*)<\infty$.  Assume that  $k_0\ge 2$.   Let  $N:= k_0(\alpha_*) (N_1+N_2)$.
By Assumption \ref{as:noise}, we have $\mathbb P\{\alpha+\ell \xi_n> \alpha+\ell(1-\varepsilon)= \alpha_*\}>0$.   Applying Lemma \ref{lem:topor}, we conclude  that there exists a   random moment $\mathcal N$, s.t., with probability 1, for $N$ steps in a row,  starting from~$\mathcal N$,
\[
\alpha_n=\alpha+ \ell \xi_n> \alpha_*>\beta_{3},\quad n\in U_\mathcal N:=\{\mathcal N,   \mathcal N+1, \dots, \mathcal N+N\}.
\]
This moment $\mathcal N$ can be chosen greater than any other random moment $\mathcal M$.
To specify $\mathcal M$ in this part of the proof, we assume $x_0\in (K_{2\tau}, \, K_{2\tau+1})$ and define
$$ \mathcal M_1:=\inf \left\{i \in \mathbb
N: x_i\in \left( K_{2\theta+1}, \, \min\{\tilde G(\beta_0), K_{2\theta+2}\} \right) \right\},$$
 \begin{equation}
 \label{def:M12}
  \mathcal M_2:=\inf\{i> \mathcal M_1:x_i\in (K_{2\tau}, \, K_{2\tau+1})\},
 \end{equation}
 and, inductively, for $s\in \mathbb N$,
 \begin{equation*}
 \begin{split}
 & \mathcal M_{2s+1}:=\inf\{i>\mathcal M_{2s}:x_i\in (K_{2\theta+1}, \, \min\{\tilde G(\beta_0), K_{2\theta+2}\})\}, \\
 & \mathcal M_{2s+2}:=\inf\{i> \mathcal M_{2s+1}:x_i\in (K_{2\tau}, \, K_{2\tau+1})\}.
  \end{split}
  \end{equation*}
If $\mathbb P\left(\Omega_1\right)>0$, where $\Omega_1=\{\omega\in \Omega:  \mathcal M_i<\infty \,\, \mbox{for all} \,\, i\in \mathbb N \}$,  it means that,
with non-zero probability, a solution circulates infinitely many times between $(K_{2\tau}, \, K_{2\tau+1})$ and $(K_{2\theta+1}, \,\min\{\tilde
G(\beta_0), K_{2\theta+2}\})$. Recall that by  Remark \ref{rem:cases} circulation is possible only between those intervals. We are going to show that it is impossible for the control $\alpha$ and the noise level $\ell$ chosen as in \eqref{def:almr0}.

Let $\mathcal M:=\mathcal M_2$ be defined as in  \eqref{def:M12}, and $\mathcal  N>\mathcal M_2$ as described above.   Assuming that  $\mathbb P\left(\Omega_1\right)>0$, we get $x_n\in (K_{2\tau}+\sigma, \, K_{2\tau+1})\cup(K_{2\theta+1},\, \min\{\tilde G(\beta_0), K_{2\theta+2}\}-\sigma)$ for all $n\ge \mathcal  N\ge \mathcal M_2$, on $\Omega_1$.
The  solution can get  larger  than $K_{2\tau+1}-\delta $ in no more than $N_1$ steps.  If it gets into $(K_{2\tau+1}-\delta,  K_{2m+3})$, it stays there by definition of $\delta$ (since $K_{2\tau+1}-\delta<d_1(\alpha-\ell)$), \eqref{def:dc4eq} and since $\alpha_n\ge \alpha-\ell > \underline \beta_2$ for all $n\in \mathbb N$. This cannot happen with non-zero probability on $\Omega_1$, see also Remark~\ref{rem:cases}.
Since $\alpha_n\ge \alpha_*>\beta_3$, for any $n\in U_\mathcal N$, we have  $ \max_{x\in [K_{2\tau}, K_{2\tau+1}]}G(\alpha_*, x) <  \hat c(\alpha_*) $ (another case from \eqref{def:beta3mr} is treated similarly),  which implies
$ \max_{x\in [K_{2\tau}, K_{2\tau+1}]}G(\alpha_*, x) <  c_{k_1}(\alpha_*) $ for some $k_1\le k_0$. Since $G(\alpha_n, x) <G(\alpha_*, x)$ on $(K_{2\theta+1}, \min\{\tilde G(\beta_0), K_{2\theta+2}\})$, if $x_n$ gets over $K_{2\theta+1}$ it satisfies $x_n\le c_{k_1}(\alpha_*)$. Also, $x_n>K_{2\theta+1}+\delta$, since if $x_n\in (K_{2\theta+1}, K_{2\theta+1}+\delta)$ it stays in $(K_{2m+1}, K_{2\theta+1}+\delta)$, which, by definition of $\Omega_1$,  can happen only with the zero probability.  So the solution can get  below $K_{2\tau+1}$ in $s$ steps, $s\le N_2$ and
then it satisfies $x_{n+s}=G(\alpha_{n+s}, x_{n+s-1})\ge G(\alpha_*, x_{n+s-1}) \ge d_{k_1}(\alpha_*)$, where $x_{n+s-1}\in  (K_{2m+3}, c_{k_1}(\alpha_*))$.
So the solution will reach $(K_{2\tau+1}, K_{2\theta+1})$ in no more than $N$ steps. Since $\bar \beta_2>\alpha_n\ge \alpha-\ell\ge \underline \beta_2$ for all $n\in \mathbb N$, by Remark  \ref{rem:cases},  the interval $(K_{2\tau+1}, \, K_{2\theta+1})$ is a new trap, so the solution  stays there and converges to some equilibrium from $\mathbb K$.
The case when  $x_{\mathcal N}\in(K_{2\theta+1},\, \min\{\tilde G(\beta_0), K_{2\theta+2}-\sigma\}) $ is similar.

Assume now that $x_0\in (K_0, K_{2\tau})$.

If a solution gets into $(K_{2\tau}, \min\{\tilde G(\beta_0), K_{2\theta+2}\})$ on some $\Omega_2$ with $\mathbb P\left(\Omega_2\right)>0$, we denote $\mathcal M_{01}=\{i: x_i\in (K_{2m}, \min\{\tilde G(\beta_0), K_{2\theta+2}) \} ) \}$, consider $x_n$ with $n\ge\mathcal M_{01}$
and apply the above argument. The case when a solution remains in $(K_0, K_{2m})$ is covered by
Lemma~\ref{lem:detmr}.
The case $x_0\in (\min\{\tilde G(L), K_{2\theta+2}\},  K_{2(m+r)})$ is similar.

All the above implies  that the  infinite circulation can only happen with the zero probability, which concludes the proof.
\end{proof}

\medskip
Received May 2020; revised July 2021; early access November 2021.
\medskip

\end{document}